\newtheorem{theorem}{Theorem}
\newtheorem{lemma}{Lemma}
\newtheorem{proposition}{Proposition}
\newtheorem{corollary}{Corollary}
\newtheorem{definition}{Definition}
\theoremstyle{definition}
\newtheorem{example}{\textsc{Example}}
\newtheorem{remark}{\textbf{Remark}}
\newtheorem{question}{\textbf{Question}}
\newtheorem*{claim*}{Claim}
\DeclareMathSymbol{\widehatsym}{\mathord}{largesymbols}{"62}
\renewcommand{\bf}{\mathbf}
\renewcommand{\emptyset}{\varnothing}
\renewcommand{\rho}{\varrho}
\renewcommand{\ast}{\star}
\providecommand{\HHb}{\mathbf{H}}
\providecommand{\AAc}{\mathscr{A}}
\providecommand{\bb}{\mathsf{bd}}
\providecommand{\bbf}{\mathfrak{b}}
\providecommand{\CCc}{\mathscr{C}}
\providecommand{\dd}{\mathsf{d}}
\providecommand{\FFc}{\mathscr{F}}
\providecommand{\GGc}{\mathscr{G}}
\providecommand{\HHc}{\mathcal{H}}
\providecommand{\MMc}{\mathcal{M}}
\providecommand{\NNb}{\mathbf{N}}
\providecommand{\PPc}{\mathcal{P}}
\providecommand{\RRb}{\mathbf{R}}
\providecommand{\VVc}{\mathcal{V}}
\providecommand{\WWc}{\mathcal{W}}
\providecommand{\ZZb}{\mathbf{Z}}
\providecommand\dom{{\rm dom}}
\providecommand\llb{\llbracket}
\providecommand\rrb{\rrbracket}
\providecommand{\imag}{{\rm Im}}
\newcommand\pto{\mathrel{\ooalign{\hfil$\mapstochar$\hfil\hfil\cr$\to$\cr}}}
\begin{document}
\title{On the notions of upper and lower density}

\author{Paolo Leonetti}
\address{Department of Statistics, Universit\`a ``Luigi Bocconi'' | via Sarfatti 25, 20136 Milano, Italy}
\curraddr{Institute of Analysis and Number Theory, Graz University of Technology | Kopernikusgasse 24/II, 8010 Graz, Austria}
\email{leonetti.paolo@gmail.com}
\urladdr{https://sites.google.com/site/leonettipaolo/}
\author{Salvatore Tringali}
\address{Department of Mathematics, Texas A\&M University at Qatar | PO Box 23874 Doha, Qatar}
\curraddr{College of Mathematics and Information Science,
	Hebei Normal University | Shijiazhuang, Hebei province, 050000 China}
\email{salvo.tringali@gmail.com}
\urladdr{http://imsc.uni-graz.at/tringali}

\subjclass[2010]{Primary 11B05, 28A10, 39B52; Secondary 60B99}
%
%
\keywords{Analytic density, asymptotic (or natural) density, axiomatization, Banach (or uniform) density, Buck density, logarithmic density, P\'olya density, upper and lower densities.}

\begin{abstract}
Let $\mathcal{P}({\bf N})$ be the power set of ${\bf N}$. We say that a function $\mu^\ast: \mathcal{P}({\bf N}) \to \bf R$ is an upper density if, for all $X,Y\subseteq{\bf N}$ and $h, k \in {\bf N}^+$, the following hold: (\textsc{f1}) $\mu^\ast({\bf N}) = 1$; (\textsc{f2}) $\mu^\ast(X) \le \mu^\ast(Y)$ if $X \subseteq Y$; (\textsc{f3}) $\mu^\ast(X \cup Y) \le \mu^\ast(X) + \mu^\ast(Y)$; (\textsc{f4}) $\mu^\ast(k\cdot X) = \frac{1}{k} \mu^\ast(X)$, where $k \cdot X := \{kx: x \in X\}$; (\textsc{f5}) $\mu^\ast(X + h) = \mu^\ast(X)$.

We show that the upper asymptotic, upper logarithmic, upper Banach, upper Buck, upper P\'olya, and upper analytic densities, together with all upper $\alpha$-densities (with $\alpha$ a real parameter $\ge -1$), are upper densities in the sense of our definition. Moreover, we establish the mutual independence of axioms (\textsc{f1})-(\textsc{f5}), and we investigate various properties of upper densities (and related functions) under the assumption that (\textsc{f2}) is replaced by the weaker condition that $\mu^\ast(X)\le 1$ for every $X\subseteq{\bf N}$. 

Overall, this allows us to extend and generalize results so far independently derived for some of the classical upper densities mentioned above, thus introducing a certain amount of unification into the theory.
\end{abstract}

\thanks{P.L. was supported by a PhD scholarship from Universit\`a ``L. Bocconi'' and partly by the Austrian Science Fund (FWF), project F5512-N26. S.T. was supported by NPRP grant No. 5-101-1-025 from the Qatar National Research Fund (a member of Qatar Foundation) and partly by the Austrian Science Fund (FWF), Project No. M 1900-N39.}
\maketitle
\thispagestyle{empty}

\section{Introduction}
\label{sec:intro}
Densities have played a fundamental role in the development of (probabilistic and additive)
number theory and certain areas of analysis and ergodic theory,
as witnessed by the great deal of research on the subject.
One reason is that densities provide an
effective alternative to measures when it comes to the problem of studying the interrelation between the ``structure'' of a set of integers $X$ and some kind of information about the ``largeness'' of $X$.

This principle is fully embodied in Erd\H{o}s' conjecture on arithmetic progressions \cite[\S{ }35.4]{Soif} (that any set $X$ of positive integers such that $\sum_{x \in X} \frac{1}{x} = \infty$ contains arbitrarily long finite arithmetic progressions), two celebrated instances of which are Szemer\'edi's theorem on sets of positive upper asymptotic density \cite{Szem} and the Green-Tao theorem on the primes \cite{GrTa}.

The present paper fits into this context,
insofar as we aim to characterize the upper asymptotic and upper Banach densities as two of the uncountably many functions satisfying a suitable set of conditions that we use to give a more conceptual proof of some nontrivial properties of these and many other ``upper densities'' that have often been considered in the literature (see, in particular, \S{ }\ref{sec:examples} and Example \ref{exa:polya}).

An analogous point of view was picked up, for instance, by A.~R.~Freedman and J.~J.~Sember (motivated by the study of convergence in sequence spaces) in \cite{FrSe}, where a \textit{lower density} (on $\NNb^+$) is essentially a non-negative (set) function $\delta_\ast: \PPc(\NNb^+) \to \RRb$ such that, for all $X,Y \subseteq \NNb^+$, the following hold:
\begin{enumerate}[label={\rm (\textsc{l}\arabic{*})}]
	\item\label{item:FS1} $\delta_\ast(X) + \delta_\ast(Y) \le \delta_\ast(X \cup Y)$ if $X \cap Y = \emptyset$;
	\item\label{item:FS2} $\delta_\ast(X) + \delta_\ast(Y) \le 1 + \delta_\ast(X \cap Y)$;
	\item\label{item:FS3} $\delta_\ast(X) = \delta_\ast(Y)$ provided that $|X \triangle Y| < \infty$;
	\item\label{item:FS4} $\delta_\ast(\NNb^+) = 1$.
\end{enumerate}
Then the \textit{upper} density associated to $\delta_\ast$ is the (provably non-negative) function
\begin{equation*}
\label{equ:Freedman-Sember_dual}
\delta^\ast: \PPc(\NNb^+) \to \RRb: X \mapsto 1 - \delta_\ast(\NNb^+ \setminus X),
\end{equation*}
see \cite[\S{ }2]{FrSe}, and it is found that, for all $X,Y \subseteq \NNb^+$, the following hold, see \cite[Proposition 2.1]{FrSe}:
\begin{enumerate}[resume, label={\rm (\textsc{l}\arabic{*})}]
	\item\label{item:FS5} $\delta_\ast(X) \le \delta_\ast(Y)$ and $\delta^\ast(X) \le \delta^\ast(Y)$ if $X \subseteq Y$;
	\item\label{item:FS6} $\delta^\ast(X \cup Y) \le \delta^\ast(X) + \delta^\ast(Y)$;
	\item\label{item:FS7} $\delta^\ast(X) = \delta^\ast(Y)$ provided that $|X \triangle Y| < \infty$;
	\item\label{item:FS8} $\delta_\ast(X) \le \delta^\ast(X)$;
	\item\label{item:FS9} $\delta_\ast(\emptyset) = \delta^\ast(\emptyset) = 0$ and $\delta^\ast(\NNb^+) = 1$.
\end{enumerate}
The basic goal of this paper is actually to give an axiomatization of the notions of upper and lower density (Definitions \ref{def:upperdensity} and \ref{def:lowerdensity}) that is  ``smoother'' than Freedman and Sember's, insofar as it implies some desirable properties that do not necessarily hold for a function $\delta_\ast$ subjected to axioms \ref{item:FS1}-\ref{item:FS4} and for its ``conjugate'' $\delta^\ast$ (see Examples \ref{exa:Freedman-Sember_densities_are_not_scalable} and \ref{exa:freedman&sember}).

Similar goals have been pursued by several authors in the past, though to the best of our knowledge early work on the subject has been mostly focused on the investigation of densities raising as a limit (in a broad sense) of a sequence or a net of measures, see, e.g., R.~C.~Buck \cite{Bu}, R.~Alexander \cite{Ale}, D.~Maharam \cite{Mah}, T.~\v{S}al\'at and R.~Tijdeman \cite{SaTi}, A.~H.~Mekler \cite{Mek}, A. Fuchs and R. Giuliano Antonini \cite{FuGiu}, A.~Blass, R.~Frankiewicz, G.~Plebanek, and C.~Ryll-Nardzewski \cite{BFPR}, M. Sleziak and M. Ziman \cite{SZ}, and M.~Di~Nasso \cite{Nas}.
On the other hand, M. Di Nasso and R. Jin \cite{NJin} have very recently proposed a notion of ``abstract upper density'', which, though much coarser than our notion of upper density, encompasses a significantly larger number of upper (and lower) densities commonly
considered in number theory.
\subsection*{Plan of the paper} In
\S{ }\ref{sec:definitions}, we introduce five axioms
we use to shape the notions of upper and lower density studied in this work, along with the related notion of induced density.
In \S\S{ }\ref{sec:translation_invariance}-\ref{sec:range}, we establish the mutual independence of these axioms,
provide examples of functions that are, or are not, upper or lower densities (in the sense of our definitions),
and prove (Theorem \ref{th:image_of_upper_densities}) that the range of an induced density is the interval $[0,1]$.
In \S{ }\ref{sec:others},
we derive some ``structural properties'': Most notably, we show that the set of all upper densities is convex; has a well-identified maximum (namely, the upper Buck density); and is closed in the topology of pointwise convergence on the set of all functions $\PPc(\NNb^+) \to \RRb$ (from which we obtain, see Example \ref{exa:polya}, that also the upper P\'olya density is an upper density).
Lastly in \S{ }\ref{sec:closing_remarks}, we draw a list of open questions.
\subsection*{Generalities}
\label{sec:notations}
Unless differently specified, the letters $h$, $i$, $j$, $k$, and $l$ (with or without subscripts) will stand for non-negative integers, the letters $m$ and $n$ for positive integers, the letter $p$ for a positive (rational) prime, and the letter $s$ for a positive real number.

We denote by $\NNb$ the set of non-negative integers (so, $0 \in \mathbf N$). For $a,b \in \RRb \cup \{\infty\}$ and $X \subseteq \RRb$ we set
$\llb a, b \rrb := [a,b] \cap \ZZb$ and $X^+ := X \cap {]0,\infty[}$.
Also, we define $\RRb_0^+ := [0,\infty[$.

We let $\HHb$ be either $\ZZb$, $\NNb$, or $\NNb^+$: If, on the one hand, it makes no substantial difference to stick to the assumption that $\HHb = \NNb$ for most of the time, on the other, some statements will be sensitive to the actual choice of $\HHb$ (see, e.g., Example \ref{exa:buck's_measure_density} or Question \ref{open:unique_extension}).

Given $X \subseteq \RRb$ and $h,k \in \RRb$, we define $k \cdot X + h := \{kx+h: x \in X\}$ and take an arithmetic progression of $\HHb$ to be any set of the form $k \cdot \HHb + h$ with $k \in \mathbf N^+$ and $h \in \HHb \cup \{0\}$.
We remark that, in this work, \emph{arithmetic progressions are always infinite, unless noted otherwise}.

For $X,Y \subseteq \HHb$, we define $X^c := \HHb \setminus X$ and $X \triangle Y := (X \setminus Y) \cup (Y \setminus X)$.
Furthermore, we say that a sequence $(x_n)_{n \ge 1}$ is the natural enumeration of a set $X \subseteq \NNb$ if $X = \{x_n: n \in \NNb^+\}$ and $x_n < x_{n+1}$ for each $n \in \NNb^+$. For a set $S$ we let $\PPc(S)$ be the power set of $S$.

Lastly, given a partial function $f$ from a set $X$ to a set $Y$, herein denoted by $f: X \pto Y$, we write $\dom(f)$ for its domain (i.e., the set of all $x \in X$ such that $y=f(x)$ for some $y \in Y$).

Further notations and terminology, if not explained when first introduced, are standard or should be clear from the context.

\section{Upper and lower densities (and quasi-densities)}
\label{sec:definitions}
We will write $\dd_\ast$ and $\dd^\ast$, respectively, for the functions $\PPc(\HHb) \to \RRb$ mapping a set $X \subseteq \HHb$ to its  lower and upper asymptotic (or natural) density, i.e., 
$$
\dd_\ast(X) := \liminf_{n \to \infty} \frac{|X \cap [1, n]|}{n}
$$
and
$$
\dd^\ast(X) := \limsup_{n \to \infty} \frac{|X \cap [1, n]|}{n}.
$$
Moreover, we will denote by $\bb_\ast$ and $\bb^\ast$, respectively, the functions $\PPc(\HHb) \to \RRb$ taking a set $X \subseteq \HHb$ to its lower and upper Banach (or uniform) density, i.e.,
$$
\bb_\ast(X) := \lim_{n \to \infty} \min_{l \ge 0} \frac{|X \cap [l+1, l+n]|}{n}
$$
and
$$
\bb^\ast(X) := \lim_{n \to \infty} \max_{l \ge 0} \frac{|X \cap [l+1, l+n]|}{n}.
$$
The existence of the latter limits, as well as equivalent definitions of $\bb_\ast$ and $\bb^\ast$ are discussed, e.g., in \cite{GTT}.
Note that, although the set $X$ may contain zero or negative integers, the definitions above only involve the positive part of $X$, cf. \cite[p. xvii]{HR} and Example \ref{exa:generalized_asymptotic_upper_densities} below.

It is well known, see, e.g., \cite[\S{ }2]{Niv} and \cite[Theorem 11.1]{NZM}, that if $X$ is an infinite subset of $\NNb^+$ and $(x_n)_{n \ge 1}$ is the natural enumeration of $X$, then
\begin{equation}
\label{equ:low_and_upp_densities_as_alternative_limits}
\dd_\ast(X) = \liminf_{n \to \infty} \frac{n}{x_n}
\quad\text{and}\quad
\dd^\ast(X) = \limsup_{n \to \infty} \frac{n}{x_n}.
\end{equation}
Since $\dd_\ast(X) = \dd^\ast(X) = 0$ for every finite $X \subseteq \NNb^+$, it follows that
$
\dd_\ast(k \cdot X + h) = \frac{1}{k} \dd_\ast(X)$ and $
\dd^\ast(k \cdot  X + h) = \frac{1}{k} \dd^\ast(X)$ for all $X \subseteq \HHb$, $h \in \mathbf N$, and $k \in \mathbf N^+$.

On the other hand, it is straightforward that, for every $X \subseteq \NNb^+$, $h \in \mathbf N$, and $k \in \mathbf N^+$,
\begin{equation*}
\min_{l \ge 0} \bigl|(k \cdot  X + h) \cap [l+1, l+nk]\bigr| = \min_{l \ge 0} \bigl|X \cap [l+1, l+n]\bigr| + O(1)\quad (n \to \infty),
\end{equation*}
which implies $\bb_\ast(k \cdot  X + h) = \frac{1}{k} \bb_\ast(X)$. The same holds for $\bb^\ast$, by similar arguments.

So, based on these considerations, it seems rather natural to forge an abstract notion of upper density that matches the above properties of $\dd^\ast$ and $\bb^\ast$:
\begin{definition}\label{def:upperdensity}
	We say that a function $\mu^\ast: \PPc(\HHb) \to \RRb$ is an \emph{upper density} \textup{(}on $\HHb$\textup{)} if:
	\begin{enumerate}[label={\rm (\textsc{f}\arabic{*})}]
		\item\label{item:F1} $\mu^\ast(\HHb) = 1$;
		\item\label{item:F2} $\mu^\ast(X) \le \mu^\ast(Y)$ for all $X, Y \subseteq \HHb$ with $X \subseteq Y$;
		\item\label{item:F3} $\mu^\ast(X \cup Y) \le \mu^\ast(X) + \mu^\ast(Y)$ for all $X,Y \subseteq \HHb$;
		\item\label{item:F4} $\mu^\ast(k\cdot X) = \frac{1}{k} \mu^\ast(X)$ for all $X \subseteq \HHb$ and $k \in \NNb^+$;
		\item\label{item:F5} $\mu^\ast(X+h) = \mu^\ast(X)$ for all $X \subseteq \HHb$ and $h \in \NNb$.
	\end{enumerate}
	In addition, we call $\mu^\ast$ an \emph{upper quasi-density} \textup{(}on $\HHb$\textup{)} if $\mu^\ast(X) \le 1$ for every $X \subseteq \HHb$ and $\mu^\ast$ satisfies \ref{item:F1} and \ref{item:F3}--\ref{item:F5}.
\end{definition}
Note that \ref{item:F1} and \ref{item:F2} imply that $\mu^\ast(X) \le 1$ for every $X\subseteq \HHb$, which is, however, false if \ref{item:F2} is not assumed (Theorem \ref{th:independence_of_(F2)}): In particular, every upper density is an upper quasi-density.
\begin{definition}\label{def:lowerdensity}
	We take the \emph{dual}, or \emph{conjugate}, of a function $\mu^\ast: \PPc(\HHb) \to \RRb$ to be the map
	$$
	\mu_\ast: \PPc(\HHb) \to \RRb: X \mapsto 1 - \mu^\ast(X^c),
	$$
	and we say that $(\mu_\ast, \mu^\ast)$ is a \emph{conjugate pair} on $\bf H$ if $\mu^\ast(\HHb) = 1$. We refer to $\mu_\ast$ as the \emph{lower dual} of $\mu^\ast$, and reciprocally to $\mu^\ast$ as the \textit{upper dual} of $\mu_\ast$, if $\mu_\ast(X) \le \mu^\ast(X)$ for every $X \subseteq \HHb$.
	
	In particular, we call $\mu_\ast$ the \emph{lower \textup{[}quasi-\textup{]}density} associated with $\mu^\ast$, or alternatively a lower \textup{[}quasi-\textup{]}density \textup{(}on $\mathbf H$\textup{)}, if $\mu^\ast$ is an upper \textup{[}quasi-\textup{]}density.
\end{definition}
By the above, $\dd^\ast$ and $\bb^\ast$ are both upper densities (in the sense of our definitions), and their lower duals are, respectively, $\dd_\ast$ and $\bb_\ast$.
\begin{definition}
	Let $(\mu_\ast, \mu^\ast)$ be a conjugate pair on $\bf H$ and $\mu$ the partial function $\PPc(\HHb) \pto \RRb: X \mapsto \mu^\ast(X)$ whose domain is given by the set
	$$
	\{X \subseteq \HHb: \mu_\ast(X) = \mu^\ast(X)\} = \{X \subseteq \HHb: \mu_\ast(X) + \mu_\ast(X^c) = 1\}.
	$$
	If $\mu^\ast$ is an upper \textup{[}quasi-\textup{]}density, we call $\mu$ the \emph{\textup{[}quasi-\textup{]}density} \textup{(}on $\HHb$\textup{)} \emph{in\-duced by $\mu^\ast$}.
\end{definition}
To ease the exposition, we will say that a function $\mu^\ast: \PPc(\HHb) \to \RRb$ is: \textit{monotone} if it satisfies \ref{item:F2}; \textit{subadditive} if it satisfies \ref{item:F3};
\textit{\textup{(}finitely\textup{)} additive} if $\mu^\ast(X \cup Y) = \mu^\ast(X) + \mu^\ast(Y)$ whenever $X,Y \subseteq \HHb$ and $X \cap Y = \emptyset$; \textit{$(-1)$-homogeneous} if it satisfies \ref{item:F4}; and \textit{trans\-la\-tion\-al invariant}, or \textit{shift-invariant}, if it satisfies \ref{item:F5}. Also, we note that \ref{item:F4} and \ref{item:F5} together are equivalent to:
\begin{enumerate}[label={\rm (\textsc{f}\arabic{*})},resume]
	\setcounter{enumi}{5}
	\item\label{item:F4b} $\mu^\ast(k \cdot X+h) = \frac{1}{k}\mu^\ast(X)$ for all $X \subseteq \HHb$ and $h, k \in \NNb^+$.
\end{enumerate}
Axioms \ref{item:F1}, \ref{item:F2}, and \ref{item:F3} correspond to conditions \ref{item:FS4}, \ref{item:FS5}, and \ref{item:FS6} from \S{ }\ref{sec:intro}, respectively (see also the comments introducing Example \ref{exa:freedman&sember} in \S{ }\ref{sec:others}). Besides that, \ref{item:F1}--\ref{item:F5} encode some of the most desirable features a density ought to have, cf. \cite[\S{ }3]{Grek}: One reason is that, roughly speaking, a density on $\mathbf N^+$ should \textit{ideally} approximate a shift-invariant probability measure with the further property that the measure of $k \cdot \mathbf N^+$ is $\frac{1}{k}$ for all $k \in \mathbf N^+$. But no \textit{countably} additive probability measure with this property can exist, see \cite[Chapter III.1, Theorem 1]{Tene}. Moreover, it is known that the existence of \textit{finitely} additive, shift-invariant probability measures $\mathcal P(\mathbf N^+) \to \RRb$ is not provable in the frame of classical mathematics without (some form of) the axiom of choice (cf. Remark \ref{rem:additivity}), which makes them unwieldy in many practical situations.

The proofs of the following two propositions are left as an exercise for the reader.
\begin{proposition}
	\label{prop:null_invariance}
	Let $\mu^\ast$ be a monotone and subadditive function $\PPc(\HHb) \to \RRb$, and let $X,Y \subseteq \HHb$.
	\begin{enumerate}[label={\rm (\roman{*})}]
		\item\label{prop:null_invariance(i)} If $\mu^\ast(X \triangle Y) = 0$, then
		$\mu^\ast(X) = \mu^\ast(Y)$ and $\mu^\ast(X^c) = \mu^\ast(Y^c)$.
		\item\label{prop:null_invariance(ii)} If $\mu^\ast(Y) = 0$, then $\mu^\ast(X) = \mu^\ast(X \cup Y) = \mu^\ast(X \setminus Y)$.
		\item\label{prop:null_invariance(iii)} If $\mu^\ast(X \setminus Y) = 0$, then $\mu^\ast(X \cap Y) = \mu^\ast(X)$.
		\item\label{prop:null_invariance(iv)} If $\mu^\ast(X) < \mu^\ast(Y)$, then $0 < \mu^\ast(Y) - \mu^\ast(X) \le \mu^\ast(Y \setminus X)$.
	\end{enumerate}
\end{proposition}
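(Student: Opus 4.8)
The plan is to reduce all four items to a single elementary fact about monotone subadditive functions, which I will call a \emph{squeeze principle}: if $A \subseteq B \subseteq A \cup C$ and $\mu^\ast(C) = 0$, then $\mu^\ast(A) = \mu^\ast(B)$. This is immediate: $A \subseteq B$ gives $\mu^\ast(A) \le \mu^\ast(B)$ by \ref{item:F2}, while $B \subseteq A \cup C$ together with \ref{item:F2} and \ref{item:F3} gives $\mu^\ast(B) \le \mu^\ast(A \cup C) \le \mu^\ast(A) + \mu^\ast(C) = \mu^\ast(A)$; note that no non-negativity of $\mu^\ast$ is invoked. With this in hand each item reduces to writing down the right inclusions.

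For \ref{prop:null_invariance(ii)}: from $X \subseteq X \cup Y$ and \ref{item:F2}--\ref{item:F3} one gets $\mu^\ast(X) \le \mu^\ast(X \cup Y) \le \mu^\ast(X) + \mu^\ast(Y) = \mu^\ast(X)$, whence $\mu^\ast(X) = \mu^\ast(X \cup Y)$; and the squeeze principle applied with $(A,B,C) = (X \setminus Y,\, X,\, Y)$, using $X \setminus Y \subseteq X \subseteq (X \setminus Y) \cup Y$, gives $\mu^\ast(X \setminus Y) = \mu^\ast(X)$. For the first assertion of \ref{prop:null_invariance(i)}: from $X \subseteq Y \cup (X \triangle Y)$ and \ref{item:F2}--\ref{item:F3} one gets $\mu^\ast(X) \le \mu^\ast(Y) + \mu^\ast(X \triangle Y) = \mu^\ast(Y)$, and the reverse inequality follows by symmetry, since $X \triangle Y = Y \triangle X$. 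The second assertion of \ref{prop:null_invariance(i)} then follows by applying the first to $X^c$ and $Y^c$, once one records the set identity $X^c \triangle Y^c = X \triangle Y$, so that the hypothesis $\mu^\ast(X \triangle Y) = 0$ transfers to $X^c, Y^c$.

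Item \ref{prop:null_invariance(iii)} is just \ref{prop:null_invariance(ii)} in disguise: write $X = (X \cap Y) \cup (X \setminus Y)$ and apply \ref{prop:null_invariance(ii)} with $X \cap Y$ and $X \setminus Y$ in the roles of $X$ and $Y$ there, so that $\mu^\ast(X \setminus Y) = 0$ yields $\mu^\ast(X \cap Y) = \mu^\ast\bigl((X \cap Y) \cup (X \setminus Y)\bigr) = \mu^\ast(X)$. Finally, for \ref{prop:null_invariance(iv)}, the strict inequality $\mu^\ast(X) < \mu^\ast(Y)$ gives $0 < \mu^\ast(Y) - \mu^\ast(X)$ outright, and from $Y \subseteq X \cup (Y \setminus X)$ together with \ref{item:F2}--\ref{item:F3} one gets $\mu^\ast(Y) \le \mu^\ast(X) + \mu^\ast(Y \setminus X)$, i.e. $\mu^\ast(Y) - \mu^\ast(X) \le \mu^\ast(Y \setminus X)$. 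I do not expect any genuine obstacle: the entire proposition is bookkeeping, and the only mildly non-obvious ingredients are the habit of arranging every inclusion in the shape $A \subseteq B \subseteq A \cup C$ and the identity $X^c \triangle Y^c = X \triangle Y$ used for the complement clause of \ref{prop:null_invariance(i)} — which is doubtless why the authors chose to omit the proof.
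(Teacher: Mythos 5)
Your proof is correct: the squeeze principle and the set-theoretic inclusions you use (including $X^c \fixed[0.2]{\text{ }}\triangle\fixed[0.3]{\text{ }} Y^c = X \fixed[0.2]{\text{ }}\triangle\fixed[0.3]{\text{ }} Y$) all check out, and each item follows from \ref{item:F2} and \ref{item:F3} exactly as you argue. The paper deliberately omits its proof of Proposition \ref{prop:null_invariance}, and your argument is precisely the routine monotonicity-plus-subadditivity bookkeeping the authors evidently had in mind, so there is nothing further to compare.
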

\begin{proposition}
	\label{prop:elementary_properties_of_d-pairs}
	Let $(\mu_\ast, \mu^\ast)$ be a conjugate pair on $\HHb$. The following hold:
	\begin{enumerate}[label={\rm (\roman{*})}]
		\item\label{item:prop:elementary_properties_of_d-pairs(i)} $\mu_\ast(\emptyset) = 0$, and $\imag(\mu^\ast) \subseteq [0,1]$ if and only if $\imag(\mu_\ast) \subseteq [0,1]$.
		\item\label{item:prop:elementary_properties_of_d-pairs(iib)} If $\mu^\ast$ is subadditive and $X_1, \ldots, X_n \subseteq \HHb$, then 
		$
		\mu^\ast(X_1 \cup \cdots \cup X_n) \le \sum_{i=1}^n \mu^\ast(X_i)$.
		\item\label{item:prop:elementary_properties_of_d-pairs(iiib)} Let $\mu^\ast$ be $(-1)$-homogeneous. Then $\mu^\ast(\emptyset) = 0$ and $\mu_\ast(\HHb) = 1$.
	\end{enumerate}
	Moreover, for all $X, Y \subseteq \HHb$ we have the following:
	\begin{enumerate}[resume, label={\rm (\roman{*})}]
		\item\label{item:prop:elementary_properties_of_d-pairs(ivb)} Assume $\mu^\ast$ is monotone. If $X \subseteq Y$, then $\mu_\ast(X) \le \mu_\ast(Y)$.
		\item\label{item:prop:elementary_properties_of_d-pairs(vb)} If $\mu^\ast(X \triangle Y) = 0$ and $\mu^\ast$ satisfies \ref{item:F2}-\ref{item:F3}, then 
		$$\mu_\ast(X) = \mu_\ast(Y)
		\quad\text{and}\quad
		\mu_\ast(X^c) = \mu_\ast(Y^c).
		$$
		\item\label{item:prop:elementary_properties_of_d-pairs(vib)} If $\mu^\ast$ is subadditive, then 
		$$
		\max(\mu_\ast(X),0) \le \mu^\ast(X)
		\quad\text{and}\quad
		\mu_\ast(X) + \mu_\ast(Y) \le 1 + \mu_\ast(X \cap Y).
		$$
	\end{enumerate}
\end{proposition}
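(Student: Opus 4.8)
The plan is to derive all six items directly from the defining identity $\mu_\ast(X) = 1 - \mu^\ast(X^c)$ together with the conjugate-pair hypothesis $\mu^\ast(\HHb) = 1$, invoking axioms \ref{item:F2}, \ref{item:F3}, \ref{item:F4} only where they are explicitly assumed. First I would record the equivalent form of the identity obtained by substituting $X^c$ for $X$, namely $\mu^\ast(X) = 1 - \mu_\ast(X^c)$ for every $X \subseteq \HHb$; this exhibits the relation between $\mu^\ast$ and $\mu_\ast$ as fully symmetric. Item \ref{item:prop:elementary_properties_of_d-pairs(i)} is then immediate: $\mu_\ast(\emptyset) = 1 - \mu^\ast(\HHb) = 0$, and if $\imag(\mu^\ast) \subseteq [0,1]$ then $\mu_\ast(X) = 1 - \mu^\ast(X^c) \in [0,1]$ for every $X$, while the reverse implication is the same computation run through the symmetric identity.

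For item \ref{item:prop:elementary_properties_of_d-pairs(iib)} I would induct on $n$: the case $n \le 2$ is trivial or is \ref{item:F3}, and the inductive step writes $X_1 \cup \cdots \cup X_n = (X_1 \cup \cdots \cup X_{n-1}) \cup X_n$, applies \ref{item:F3} once, and invokes the inductive hypothesis. Item \ref{item:prop:elementary_properties_of_d-pairs(iiib)} uses that $k \cdot \emptyset = \emptyset$ and, when $0 \in \HHb$, that $k \cdot \{0\} = \{0\}$ for every $k \in \NNb^+$: applying \ref{item:F4} with $k = 2$ gives $\mu^\ast(\emptyset) = \tfrac12 \mu^\ast(\emptyset)$, hence $\mu^\ast(\emptyset) = 0$, and likewise $\mu^\ast(\{0\}) = 0$; then $\mu_\ast(\HHb) = 1 - \mu^\ast(\emptyset) = 1$. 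Item \ref{item:prop:elementary_properties_of_d-pairs(ivb)}: if $X \subseteq Y$ then $Y^c \subseteq X^c$, so monotonicity of $\mu^\ast$ gives $\mu^\ast(Y^c) \le \mu^\ast(X^c)$, whence $\mu_\ast(X) = 1 - \mu^\ast(X^c) \le 1 - \mu^\ast(Y^c) = \mu_\ast(Y)$. Item \ref{item:prop:elementary_properties_of_d-pairs(vb)}: since $X \triangle Y = X^c \triangle Y^c$, Proposition \ref{prop:null_invariance}\ref{prop:null_invariance(i)} (which applies because \ref{item:F2}-\ref{item:F3} make $\mu^\ast$ monotone and subadditive) yields $\mu^\ast(X) = \mu^\ast(Y)$ and $\mu^\ast(X^c) = \mu^\ast(Y^c)$, and passing through the identity gives $\mu_\ast(X) = \mu_\ast(Y)$ and $\mu_\ast(X^c) = \mu_\ast(Y^c)$.

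Finally, item \ref{item:prop:elementary_properties_of_d-pairs(vib)} is where the only genuine trick appears. To obtain $\mu^\ast(X) \ge 0$ for every $X \subseteq \HHb$, I would use $\HHb \cup X = \HHb$ together with \ref{item:F3} to get $1 = \mu^\ast(\HHb) \le \mu^\ast(\HHb) + \mu^\ast(X) = 1 + \mu^\ast(X)$. Combining this with $1 = \mu^\ast(X \cup X^c) \le \mu^\ast(X) + \mu^\ast(X^c)$, which rearranges to $\mu_\ast(X) = 1 - \mu^\ast(X^c) \le \mu^\ast(X)$, proves $\max(\mu_\ast(X),0) \le \mu^\ast(X)$. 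For the superadditivity-type bound, note $(X \cap Y)^c = X^c \cup Y^c$, so that $\mu_\ast(X) + \mu_\ast(Y) \le 1 + \mu_\ast(X \cap Y)$ becomes, after cancelling the constant terms, exactly the subadditivity inequality $\mu^\ast(X^c \cup Y^c) \le \mu^\ast(X^c) + \mu^\ast(Y^c)$, i.e.\ \ref{item:F3}. I expect the only real (and still minor) obstacle to be recalling the $\HHb \cup X = \HHb$ device for the nonnegativity of $\mu^\ast$; everything else is bookkeeping around the identity $\mu_\ast = 1 - \mu^\ast(\cdot^c)$ and the axioms.
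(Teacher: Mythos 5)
Your proposal is correct and follows essentially the same route as the paper: the duality identity $\mu_\ast(X)=1-\mu^\ast(X^c)$ for (i), (iv), (v) (via Proposition \ref{prop:null_invariance}\ref{prop:null_invariance(i)}), induction on \ref{item:F3} for (ii), the fixed-point trick $k\cdot X=X$ with \ref{item:F4} for (iii), and $1=\mu^\ast(X\cup X^c)\le\mu^\ast(X)+\mu^\ast(X^c)$ plus the identity $(X\cap Y)^c=X^c\cup Y^c$ for (vi). The only cosmetic difference is your nonnegativity device ($\HHb\cup X=\HHb$ with \ref{item:F3}) in place of the paper's $\mu^\ast(X)=\mu^\ast(X\cup X)\le 2\mu^\ast(X)$; both are one-line consequences of subadditivity.
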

We continue with a few examples and remarks. In particular, the first example is borrowed from \cite[p. 297]{FrSe}, while the third shows that conditions \ref{item:F1}-\ref{item:F4} alone are not even enough to guarantee that, for a function $\mu^\ast: \PPc(\HHb) \to \RRb$, $\mu^\ast(X) = 0$ whenever $X \subseteq \HHb$ is finite.
\begin{example}
	\label{exa:Freedman-Sember_densities_are_not_scalable}
	Let $\delta_\ast$ be the function $\PPc(\HHb) \to \RRb$ sending a set $X \subseteq \HHb$ either to $1$ if $|X^c| < \infty$ or to $0$ otherwise. It is found that $\delta_\ast$ satisfies axioms \ref{item:FS1}-\ref{item:FS4} and $\delta_\ast(k \cdot \HHb) = 0 \ne \frac{1}{k} \delta_\ast(\HHb)$ for $k \ge 2$; also, if $\delta^\ast$ is the conjugate of $\delta_\ast$, then $\delta^\ast(k \cdot X) = \delta^\ast(X) = 1$ for every $k \ge 1$ and finite $X \subseteq \HHb$. That is, neither $\delta_\ast$ nor $\delta^\ast$ are $(-1)$-ho\-mo\-ge\-ne\-ous.
\end{example}
\begin{example}
	\label{exa:natural_of_uniform_densities}
	We have already noted that $\dd^\ast$ is an upper density, and that $\dd_\ast$ is the dual of $\mathsf d^\ast$ (hence a lower density) and satisfies \ref{item:F1}, \ref{item:F2}, and \ref{item:F4}. Now, if we let
	\begin{equation*}
	X := \bigcup_{n = 1}^\infty \llb (2n-1)!, (2n)!-1 \rrb \subseteq \NNb^+ \quad\text{and}\quad Y := \bigcup_{n = 1}^\infty \llb (2n)!, (2n+1)!-1 \rrb \subseteq \NNb^+,
	\end{equation*}
	then $\dd_\ast(X) = \dd_\ast(Y) = 0$ (e.g., by Lemma \ref{lem:asymptotic_upper_density_of_special_sets} below), while $\mathsf d_\ast(X \cup Y) = 1$. So we see that lower densities (in the sense of our definitions) need not be subadditive: This may sound ``obvious'' (as we mentioned in the introduction that $\mathsf d_\ast$ also satisfies \ref{item:FS6}), but should be confronted with Remark \ref{rem:additivity} below.
\end{example}
\begin{example}
	\label{exa:minimum_function}
	It is straightforward to check (we omit details) that the function
	\begin{equation*}
	\mathfrak{m}: \PPc(\HHb) \to \RRb: X \mapsto \frac{1}{\inf(X^+)},
	\end{equation*}
	where $\inf(\emptyset) := \infty$ and $\frac{1}{\infty} := 0$, satisfies conditions \ref{item:F1}-\ref{item:F4}. Note, however, that $\mathfrak{m}(X+h) \ne \mathfrak{m}(X) \ne 0$ for every $X \subseteq \HHb$ and $h \in \NNb^+$ such that $X^+ \ne \emptyset$.
\end{example}
\begin{remark}
	\label{rem:upper_quasi-densities_are_non-negative}
	Proposition \ref{prop:elementary_properties_of_d-pairs}\ref{item:prop:elementary_properties_of_d-pairs(vib)} yields that an upper quasi-density is necessarily non-negative, hence its range is contained in $[0,1]$.
\end{remark}
\begin{remark}
	\label{rem:Grekos_axioms}
	Four out of the five axioms we are using to shape our notion of upper density are essentially the same as four out of the seven axioms considered in \cite{Grek} as a \textit{suggestion} for an abstract notion of density on $\NNb^+$, see in particular axioms (A2)-(A5) in \cite[\S{ }3]{Grek}.
\end{remark}
\begin{remark}
	\label{rem:additivity}
	All the results in the present paper can be proved by appealing, e.g., to the usual device of Zermelo-Fraenkel set theory \textit{without} the axiom of choice (ZF for short). However, if we assume to work in Zermelo-Fraenkel set theory \textit{with} the axiom of choice (ZFC for short), then it follows from 
	\cite[Appendix 5C]{Dow}
	that there are uncountably many non-negative additive functions $\theta: \PPc(\NNb^+) \to \RRb$ such that $\theta(\NNb^+) = 1$ and $\theta(k \cdot X + h) = \frac{1}{k} \theta(X)$ for all $X \subseteq \NNb^+$ and $h, k \in \NNb^+$,
	whence
	the function $\mu^\ast: \PPc(\HHb) \to \RRb: X \mapsto \theta(X^+)$ is an \textit{additive} upper density.
	
	On the other hand, we have from Proposition \ref{prop:mu_of_finite_sets} below that, if $\mu^\ast$ is an upper quasi-density on $\HHb$, then $\mu^\ast(X) = 0$ for every finite $X \subseteq \HHb$, and it is provable in ZF that the existence of an additive measure $\theta: \PPc(\NNb^+) \to \mathbf R$ which vanishes on singletons yields the existence of a subset of $\RRb$ without the property of Baire, see \cite[\S\S{ }29.37 and 29.38]{Sch96}. Yet, ZF alone does not prove the existence of such an additive measure $\theta$, see \cite{Pinc}.
	
	So, putting it all together, we see that the existence of an additive upper quasi-density on $\HHb$ is provable in ZFC, but independent of ZF. \end{remark}
\begin{remark}
	\label{remark:sigma-subadditivity}
	Axiom \ref{item:F4b} is incompatible with the following condition, which is often referred to as  sigma-subadditivity (in analogy to the sigma-additivity of measures):
	\begin{enumerate}[label={\rm (\textsc{f}\arabic{*})},resume]
		\item\label{item:F3b} If $(X_n)_{n \ge 1}$ is a sequence of subsets of $\HHb$, then $\mu^\ast\Bigl(\bigcup_{n = 1}^\infty X_n\Bigr) \le \sum_{n = 1}^\infty \mu^\ast(X_n)$.
	\end{enumerate}
	Indeed, let $\mu^\ast$ be a function $\mathcal P(\HHb) \to \RRb$ that satisfies \ref{item:F4b}. Then $\mu^\ast$ is $(-1)$-homogeneous, and we find that, if $0 \in \mathbf H$, then $\mu^\ast(\{0\}) = 0$, because $\mu^\ast(\{0\}) = \mu^\ast(k \cdot \{0\}) = \frac{1}{k} \mu^\ast(\{0\})$ for all $k \in \mathbf N^+$. Likewise, we have by \ref{item:F4b} that, for every $k \ge 1$,
	$$
	\mu^\ast(\{1\}) = \mu^\ast(\{1\} + k-1) = \mu^\ast(\{k\}) = \mu^\ast(k \cdot \{1\}) = \frac{1}{k} \mu^\ast(\{1\}),
	$$
	which yields $\mu^\ast(\{1\}) = 0$. So, if $\HHb = \ZZb$ and $k \le 0$, then 
	$$
	\mu^\ast(\{k\}) = \mu^\ast(\{k\} + (-k)) = \mu^\ast(\{0\}) = 0. 
	$$
	It follows that $\mu^\ast(\{k\}) = 0$ for all $k \in \mathbf H$.
	
	With this in hand, let $(x_n)_{n \ge 1}$ be an enumeration of $\HHb$ and set $X_n := \{x_n\}$ for all $n$. Then $\HHb = \bigcup_{n = 1}^\infty X_n$, but $1 = \mu^\ast(\HHb) > \sum_{n = 1}^\infty \mu^\ast(X_n) = 0$, which is incompatible with \ref{item:F3b}.
\end{remark}

\section{Independence of the axioms}
\label{sec:translation_invariance}
The independence of \ref{item:F1} from the other axioms is obvious, while that of \ref{item:F3} and \ref{item:F5} follows from Examples \ref{exa:natural_of_uniform_densities} and \ref{exa:minimum_function}, respectively.
Moreover, \ref{item:F4} is independent from \ref{item:F1}-\ref{item:F3} and \ref{item:F5}: Indeed, these latter conditions are all satisfied by the constant function $\PPc(\HHb) \to \RRb: X \mapsto 1$, which however does not satisfy \ref{item:F4}.
In contrast, the independence of \ref{item:F2} from \ref{item:F1}, \ref{item:F3} and \ref{item:F4b} is much more delicate,
and it clearly follows from the existence of an upper quasi-density that is not an upper density, which is what we are going to prove.

We begin with a rather simple lemma on the upper (and the lower) asymptotic density of subsets of $\NNb^+$ in which ``large gaps'' alternate to ``large intervals'' (we omit the proof).
\begin{lemma}
	\label{lem:asymptotic_upper_density_of_special_sets}
	Let $(a_n)_{n \ge 1}$ and $(b_n)_{n \ge 1}$ be sequences of positive real numbers such that:
	\begin{enumerate}[label={\rm (\roman{*})}]
		\item\label{item:lemma_hypothesis_i} $a_n + 1 \le b_n < a_{n+1}$ for all sufficiently large $n$;
		\item\label{item:lemma_hypothesis_ii} $a_n/b_n \to \ell$ as $n \to \infty$;
		\item\label{item:lemma_hypothesis_iii} $b_n/a_{n+1} \to 0$ as $n \to \infty$.
	\end{enumerate}
	In addition, pick $k \in \NNb^+$ and $h \in \ZZb$, and let $X \subseteq \NNb^+$ be the intersection of the sets $\bigcup_{n=1}^\infty \llb a_n+1, b_n \rrb$ and $k\cdot \HHb+h$. Then $\dd^\ast(X) = \frac{1}{k}(1-\ell)$ and $\dd_\ast(X) = 0$; in particular, if  $\alpha \in [0,1]$ and we let
	$
	a_n := \alpha  (2n-1)! + (1-\alpha) (2n)!$ and $b_n := (2n)!+1$,
	then $\dd^\ast(X) = \frac{\alpha}{k}$ and $\dd_\ast(X) = 0$.
\end{lemma}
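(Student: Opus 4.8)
The plan is to reduce everything to the ``unweighted'' set $X' := \bigcup_{n\ge1}\llb a_n+1,b_n\rrb$, for which the density can be read off by splitting the partial counting function into a ``block'' regime and a ``gap'' regime, and then to transfer the conclusion to $X = X'\cap(k\cdot\HHb+h)$ using that each block $\llb a_m+1,b_m\rrb$ meets an arithmetic progression of common difference $k$ in $\frac1k(b_m-a_m)+O(1)$ points. First I would record the growth we get for free: since $a_{n+1}\le b_{n+1}$ for large $n$ by \ref{item:lemma_hypothesis_i}, hypothesis \ref{item:lemma_hypothesis_iii} forces $b_n/b_{n+1}\to0$, hence $b_n\to\infty$ and there is $n_0$ with $a_n+1\le b_n<a_{n+1}$ and $b_{n+1}\ge2b_n$ (so $a_{n+1}\ge2b_n\ge2a_n$) for all $n\ge n_0$. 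Thus $a_n,b_n$ grow at least geometrically, so $n=o(a_n)$ and $n=o(b_n)$, $\sum_{m=1}^{n}b_m=O(b_n)$, and $\sum_{m=1}^{n-1}b_m=O(b_{n-1})=o(a_n)$ (the last equality by \ref{item:lemma_hypothesis_iii}). Since $X'$ and $\bigcup_{n\ge n_0}\llb a_n+1,b_n\rrb$ differ by a finite set (the terms with $n<n_0$ all lie in $\llb1,M\rrb$, $M:=\max_{n<n_0}b_n$), and similarly for $X$, and since finite sets are $\dd^\ast$- and $\dd_\ast$-null, I may assume the inequalities in \ref{item:lemma_hypothesis_i} hold for all $n\ge1$.

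Next I would do the counting for $X'$. Fix a large integer $N$ and let $n=n(N)$ be the index with either $a_n+1\le N\le b_n$ (``$N$ in the $n$-th block'') or $b_n<N<a_{n+1}$ (``$N$ in the $n$-th gap''). Using $|\llb u,v\rrb|=v-u+O(1)$ for reals $u\le v$ one gets
\[
|X'\cap\llb1,N\rrb|=\sum_{m<n}(b_m-a_m)+\bigl(\min(N,b_n)-a_n\bigr)+O(n),
\]
and since $\sum_{m<n}(b_m-a_m)\le\sum_{m<n}b_m=o(a_n)$ while $O(n)=o(a_n)=o(N)$ (because $N>a_n$ at level $n$), this is $\min(N,b_n)-a_n+o(N)$. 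In the block case, dividing by $N$ gives $1-\frac{a_n}{N}+o(1)\le1-\frac{a_n}{b_n}+o(1)$ since $N\le b_n$; in the gap case it gives $\frac{b_n-a_n}{N}+o(1)\le\frac{b_n-a_n}{b_n}+o(1)=1-\frac{a_n}{b_n}+o(1)$ since $N\ge b_n$. Hence $\frac1N|X'\cap\llb1,N\rrb|\le1-\ell+o(1)$ by \ref{item:lemma_hypothesis_ii}, so $\dd^\ast(X')\le1-\ell$; evaluating at $N=\lfloor b_n\rfloor$ gives the reverse bound, so $\dd^\ast(X')=1-\ell$. For the lower density, evaluating at $N=\lfloor a_{n+1}\rfloor$ (a gap position) one has $|X'\cap\llb1,N\rrb|\le|X'\cap\llb1,b_n\rrb|=O(b_n)$, so $\frac1N|X'\cap\llb1,N\rrb|=O(b_n/a_{n+1})\to0$ by \ref{item:lemma_hypothesis_iii}; as the ratio is always $\ge0$, this forces $\dd_\ast(X')=0$.

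Then I would pass to $X$. For all large $m$ the set $(k\cdot\HHb+h)\cap\llb a_m+1,b_m\rrb$ is a fixed residue class modulo $k$ intersected with that block, hence has $\frac1k(b_m-a_m)+O(1)$ elements (the finitely many exceptional $m$ contribute $O(1)$ in total). Summing over the blocks up to height $N$ and again absorbing the accumulated $O(n)$ into $o(N)$ via $n=o(N)$ yields
\[
|X\cap\llb1,N\rrb|=\tfrac1k\,|X'\cap\llb1,N\rrb|+o(N).
\]
Dividing by $N$ and taking $\limsup$ and $\liminf$ as $N\to\infty$ gives $\dd^\ast(X)=\frac1k\dd^\ast(X')=\frac1k(1-\ell)$ and $\dd_\ast(X)=\frac1k\dd_\ast(X')=0$. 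For the stated special case, I would just check the hypotheses for $a_n=\alpha(2n-1)!+(1-\alpha)(2n)!$ and $b_n=(2n)!+1$: one has $a_n\le(2n)!<b_n$, $a_{n+1}\ge(2n+1)!>b_n$, and $b_n/a_{n+1}\le((2n)!+1)/(2n+1)!\to0$, while $a_n/b_n=\frac{\alpha/(2n)+(1-\alpha)}{1+1/(2n)!}\to1-\alpha$, so $\ell=1-\alpha$ and the general formula specializes to $\dd^\ast(X)=\frac{\alpha}{k}$, $\dd_\ast(X)=0$.

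The hard part will not be conceptual but rather the bookkeeping of the $O(1)$ errors: each block contributes an $O(1)$ both from real-to-integer rounding and from the intersection with the progression $k\cdot\HHb+h$, so over the $\asymp n$ blocks below height $N$ these add up to $O(n)$, and the whole argument depends on \ref{item:lemma_hypothesis_iii} forcing geometric growth of $a_n$ so that $O(n)=o(N)$. Apart from that, everything is a routine block/gap dichotomy fed into hypotheses \ref{item:lemma_hypothesis_ii} and \ref{item:lemma_hypothesis_iii}.
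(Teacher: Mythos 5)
Your proof is correct and follows essentially the same block/gap argument as the paper: the upper density is read off at the right endpoints of the blocks, the lower density along the left endpoints $a_{n+1}$ of the gaps, with hypothesis (iii) wiping out the contribution of all earlier blocks and (ii) supplying the limit $1-\ell$. The only differences are organizational: the paper works with $X$ directly and bounds the earlier blocks' contribution at height $n$ simply by $b_{v_n-1}$ (combined with the inequality $\frac{x}{y}\le\frac{x+z}{y+z}$ to pass to the denominator $b_{v_n}$), whereas you first compute $\dd^\ast$ and $\dd_\ast$ of the unweighted union of blocks, using the geometric growth of $(b_n)$ forced by (iii) to control $\sum_{m<n} b_m$ and the accumulated $O(n)$ rounding errors, and then transfer to $X$ by the counting identity $|X\cap\llb 1,N\rrb|=\tfrac1k|X'\cap\llb 1,N\rrb|+o(N)$.
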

The next step is to show how the independence of \ref{item:F2} can be derived from the existence of a suitable ``indexing'' $\iota: \PPc(\HHb) \to \NNb^+ \cup \{\infty\}$ associated with an upper quasi-density $\mu^\ast$ on $\ZZb$ for which $\mu^\ast(\HHb) = 1$ (if $\mathbf H \subsetneq \mathbf Z$, we need not have $\mu^\ast(\mathbf H) = 1$).

More specifically, we will prove that the existence of such an indexing can be used to construct infinitely many non-monotone functions $\mu^\ast: \PPc(\HHb) \to \RRb$ that, on the one hand, are not upper densities (in fact, we will show that they can even be unbounded), and on the other, are subjected to additional constraints on the values they can attain: This will imply, in particular, that \ref{item:F2} is independent of \ref{item:F1}, \ref{item:F3} and \ref{item:F4b} also in the case where $\mu^\ast$ is an upper quasi-density.
\begin{lemma}
	\label{lem:if_an_index_exists}
	Let $\mu^\ast$ be an upper quasi-density on $\ZZb$ with the property that $\mu^\ast(\HHb) = 1$, and suppose there exists a function $\iota: \PPc(\HHb) \to \NNb^+ \cup \{\infty\}$ such that:
	\begin{enumerate}[label={\rm (\textsc{i}\arabic{*})}]
		\item\label{item:I1} $\iota(\HHb) = 1$;
		\item\label{item:I2} $\iota(X) = \infty$ for some $X \subseteq \HHb$ only if $\mu^\ast(X) = 0$;
		\item\label{item:I3} $\iota(Y) \le \iota(X)$ whenever $X \subseteq Y \subseteq \HHb$ and $\mu^\ast(Y) > 0$;
		\item\label{item:I4} $\iota(X) = \iota(k \cdot X + h)$ for all $X \subseteq \HHb$ and $h, k \in \NNb^+$ with $\mu^\ast(X) > 0$.
	\end{enumerate}
	Moreover, let $(a_n)_{n \ge 1}$ be a non-decreasing real sequence with $a_1 = 1$. Then the function
	$$
	\theta^\ast: \PPc(\HHb) \to \RRb: X \mapsto \left\{
	\begin{array}{ll}
	\!\! a_{\iota(X)}  \mu^\ast(X) & \text{if } \iota(X) < \infty \\
	\!\! 0 & \text{otherwise}
	\end{array}
	\!\!\right..
	$$
	satisfies axioms \ref{item:F1}, \ref{item:F3} and \ref{item:F4b}.
	
	Further, if we set $\varrho_n := \sup\{\mu^\ast(X): X \subseteq \HHb \textup{\text{ and }}\iota(X) = n\}$, and, in addition to the previous assumptions, we suppose that
	\begin{enumerate}[resume,label={\rm (\textsc{i}\arabic{*})}]
		\item\label{item:I5} $0 < \varrho_{n+1} \le \varrho_n$ for all $n$, and $\varrho_n < 1$ for all but finitely many $n$;
		\item\label{item:I6} for every $n \in \NNb^+$ and $\varepsilon \in \RRb^+$, there are $X, Y \subseteq \HHb$ with $Y \subsetneq X$ such that $\iota(X) = n$, $\iota(Y) = n+1$, and $\varrho_{n+1} \le (1+\varepsilon) \mu^\ast(Y)$,
	\end{enumerate}
	then, for a given $K \in [1, \infty]$, we can choose the ``weights'' $a_2, a_3, \ldots$ so that $\theta^\ast$ is non-monotone and $\sup_{X  \in \PPc(\HHb)} \theta^\ast(X) = K$.
\end{lemma}
\begin{proof}
	Clearly, $\theta^\ast$ satisfies \ref{item:F1}, since \ref{item:I1} and the hypothesis that $\mu^\ast(\HHb) = a_1 = 1$ give $\theta^\ast(\HHb) = a_1 \mu^\ast(\HHb) = 1$. Also, it is obvious that $\theta^\ast(X) \ge 0$ for all $X \in \mathcal P(\HHb)$.
	
	As for \ref{item:F3}, pick $X,Y \subseteq \HHb$, and assume without loss of generality that $\mu^\ast(X \cup Y) > 0$ and $\mu^\ast(Y) \le \mu^\ast(X)$. We have by \ref{item:I2} and the subadditivity of $\mu^\ast$ that
	$\iota(X) < \infty$  (if $\iota(X) = \infty$, then $\mu^\ast(X \cup Y) \le 2\mu^\ast(X) = 0$, a contradiction). So we obtain from \ref{item:I3} that $\iota(X \cup Y) \le \iota(X) < \infty$, and we conclude that $\theta^\ast$ is subadditive, because
	\begin{equation}
	\label{equ:indicial_inequalities}
	\begin{split}
	\theta^\ast(X \cup Y)
	& = a_{\iota(X \cup Y)} \mu^\ast(X \cup Y) \le a_{\iota(X \cup Y)} \mu^\ast(X) + a_{\iota(X \cup Y)} \mu^\ast(Y) \\
	& \le a_{\iota(X)}  \mu^\ast(X) + a_{\iota(X \cup Y)} \mu^\ast(Y) \le \theta^\ast(X) + \theta^\ast(Y)
	\end{split}
	\end{equation}
	(note that here we have used, among other things, that $a_n \le a_{n+1}$ for all $n$).
	
	Lastly, let $X \subseteq \HHb$ and $h, k \in \NNb^+$. We want to demonstrate that $\theta^\ast(k \cdot X + h) = \theta^\ast(X)$. This is trivial if $\mu^\ast(X) = 0$, since $\theta^\ast(Y) = 0$ for every $Y \subseteq \HHb$ with $\mu^\ast(Y) = 0$, and $\mu^\ast$ being an upper quasi-density implies from \ref{item:F4b} that $\mu^\ast(k \cdot X + h) = \frac{1}{k} \mu^\ast(X) = 0$.
	So assume $\mu^\ast(X) \ne 0$. Then $\iota(k \cdot  X + h) = \iota(X) < \infty$ by \ref{item:I2} and \ref{item:I4}, with the result that
	$$
	\theta^\ast(k\cdot X+h) = a_{\iota(k  \cdot X + h)}  \mu^\ast(k  \cdot X + h) \stackrel{\ref{item:F4b}}{=}
	\frac{1}{k} a_{\iota(X)}  \mu^\ast(X) = \frac{1}{k} \theta^\ast(X).
	$$
	It follows that $\theta^\ast$ satisfies \ref{item:F4b}, which, together with the rest, proves the first part of the lemma.
	
	As for the second part, fix $K \in [1,\infty]$ and let $\iota$ satisfy \ref{item:I5} and \ref{item:I6}. Then we get from \ref{item:I1} and the fact that $\mu^\ast(X) \le \mu^\ast(\mathbf H) = 1$ for all $X \subseteq \mathbf H$, that there exists $v \in \mathbf N_{\ge 2}$ such that
	\begin{equation}
	\label{equ:basic_constraints}
	0 < \varrho_{n+1} \le \varrho_n < \varrho_{v-1} = \cdots = \varrho_1 = 1,\quad\text{for }n \ge v.
	\end{equation}
	So we distinguish two cases depending on the actual value of $K$.
	\begin{description}[leftmargin=0.5cm]
		\item[Case 1] $1 < K \le \infty$. Based on \ref{item:I5}, we make $(a_n)_{n \ge 1}$ into a non-decreasing sequence by letting $a_n := \varrho_n^{-1} \min(2^{n-1}, K)$ (observe that $a_1 = 1$).
		
		Then, given $X \subseteq \HHb$ with $\iota(X) < \infty$, we have $
		\theta^\ast(X) = a_{\iota(X)}  \mu^\ast(X) \le \min(2^{\iota(X)-1}, K) \le K$. In addition, if we set $\delta := \min(2,K)$ and pick $\varepsilon \in {]0,1-\delta^{-1}[}$, we can find $X \subseteq \HHb$ such that $\iota(X) = 2$ and $\mu^\ast(X) \ge (1-\varepsilon)\varrho_2$, so that
		$$
		\theta^\ast(X) = a_2  \mu^\ast(X) \ge (1-\varepsilon)\delta > 1 = \theta^\ast(\HHb).
		$$
		Consequently, we see that $\sup_{X  \in \PPc(\HHb)} \theta^\ast(X) = K$ and $\theta^\ast$ is non-monotone.
		\item[Case 2] $K = 1$. Similarly to the previous case, but now based on \eqref{equ:basic_constraints}, we make $(a_n)_{n \ge 1}$ into a non-decreasing sequence by taking
		$a_1 := \cdots := a_v := 1$ and $a_n := \frac{1}{2}(1+\varrho_v) \varrho_n^{-1}$ for $n > v$.
		
		Then we let $X \subseteq \HHb$ such that $n := \iota(X) < \infty$, so that $\theta^\ast(X) = a_n  \mu^\ast(X)$.
		If $1 \le n \le v$, we have $\theta^\ast(X) = \mu^\ast(X) \in [0,1]$; otherwise,
		$$
		\theta^\ast(X) = \frac{1}{2}(1+\varrho_v) \varrho_n^{-1} \mu^\ast(X) \le \frac{1}{2}(1+\varrho_v) \le 1,
		$$
		since $\iota(X) = n$ yields $\mu^\ast(X) \le \varrho_n$. It follows that $\theta^\ast(X) \le 1 = \theta^\ast(\HHb)$ for every $X \subseteq \HHb$. So $\sup_{X  \in \PPc(\HHb)} \theta^\ast(X) = K = 1$, and we are left to show that $\theta^\ast$ is non-monotone.
		
		For this, note that $\varrho_v < \frac{1}{2}(1 + \varrho_v)$, and let $\varepsilon \in \RRb^+$ be such that $\varrho_v < \frac{1}{2}(1 + \varrho_v)(1+\varepsilon)^{-1}$. By \ref{item:I6}, there exist $X,Y \subseteq \HHb$ with the property that $Y \subsetneq X$, $\iota(X) = v$, $\iota(Y) = v+1$, and $\varrho_{v+1} \le (1+\varepsilon) \mu^\ast(Y)$, from which we see that $\theta^\ast$ is non-monotone, because
		$$
		\theta^\ast(X) = a_v  \mu^\ast(X) = \mu^\ast(X) \le \varrho_v < \frac{1}{2}(1+\varrho_v)(1+\varepsilon)^{-1} \le a_{v+1}  \mu^\ast(Y) = \theta^\ast(Y).
		$$
	\end{description}
	Putting it all together, the proof is thus complete.
\end{proof}
Finally, we prove that there is a function $\iota: \PPc(\HHb) \to \NNb^+ \cup \{\infty\}$ that fulfills the assumptions of Lemma \ref{lem:if_an_index_exists}. This is the content of the following:
\begin{lemma}
	\label{lem:existence_of_an_index}
	Let $\mu^\ast$ be the upper asymptotic density on $\ZZb$ and $\iota$ the function $\PPc(\HHb) \to \NNb^+ \cup \{\infty\}$ taking a set $X \subseteq \HHb$ to the infimum of the integers $n \ge 1$ for which there exists $Y \subseteq \HHb$ such that
	$\mu^\ast(Y) \ge \frac{1}{n}$ and $|(q \cdot Y + r) \setminus X| < \infty$ for some $q \in \NNb^+$ and $r \in \ZZb$,
	with the convention that $\inf(\emptyset) := \infty$. Then $\iota$ satisfies conditions \ref{item:I1}-\ref{item:I6}.
\end{lemma}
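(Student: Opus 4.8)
The plan is to recast $\iota$ in terms of the auxiliary quantity
\[
\sigma(X) := \sup\bigl\{\mu^\ast(Y) : Y\subseteq\NNb^+ \text{ and } q\cdot Y+r \text{ is almost contained in } X \text{ for some } q\in\NNb^+,\ r\in\ZZb\bigr\},
\]
where we say that $A$ is \emph{almost contained} in $B$ when $|A\setminus B|<\infty$. Since $\mu^\ast$ (the upper asymptotic density on $\ZZb$) is unaffected by finite modifications and sees only positive parts, the supremum defining $\iota(X)$ is unchanged if the test sets $Y$ are required to lie in $\NNb^+$; hence $\iota(X)$ is the least $n\ge1$ with $\sigma(X)\ge 1/n$ whenever that supremum is positive and attained, which will be the case for all the sets we need. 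Two remarks are used throughout: first, $\sigma(X)\ge\mu^\ast(X)$, via $Y=X^+$, $q=1$, $r=0$; second, if $q\cdot Y+r$ is almost contained in $X$ then $\tfrac1q\mu^\ast(Y)=\mu^\ast(q\cdot Y+r)=\mu^\ast\bigl((q\cdot Y+r)\cap X\bigr)\le\mu^\ast(X)$ by Proposition~\ref{prop:null_invariance} and monotonicity, so $\sigma(X)>0$ forces $\mu^\ast(X)>0$. Given these, \ref{item:I1} is witnessed by $(Y,q,r)=(\NNb^+,1,0)$; \ref{item:I2} is the contrapositive of ``$\mu^\ast(X)>0\Rightarrow\iota(X)\le\lceil 1/\mu^\ast(X)\rceil<\infty$'' (take $Y=X^+$); and \ref{item:I3} holds because, when $X\subseteq Y$, any witness $(Z,q,r)$ for $\iota(X)$ is a witness for $\iota(Y)$, as $(q\cdot Z+r)\setminus Y\subseteq(q\cdot Z+r)\setminus X$. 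Finally, if $\iota(X)=n\ge2$ then no admissible test set has $\mu^\ast$-value $\ge 1/(n-1)$, so $\mu^\ast(X)\le\sigma(X)\le 1/(n-1)$; this already gives $\varrho_n\le 1/(n-1)$ for $n\ge2$, one half of \ref{item:I5}.

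For \ref{item:I4}, fix $X\subseteq\HHb$ with $\mu^\ast(X)>0$ and $h,k\in\NNb^+$, and set $X^\sharp:=k\cdot X+h$, so $\mu^\ast(X^\sharp)=\tfrac1k\mu^\ast(X)>0$ by \ref{item:F4b}. The bound $\iota(X^\sharp)\le\iota(X)$ is immediate: a witness $(Y,q,r)$ for $\iota(X)$ gives the witness $(Y,kq,kr+h)$ for $\iota(X^\sharp)$, since $(kq)\cdot Y+(kr+h)=k\cdot(q\cdot Y+r)+h$ and $t\mapsto kt+h$ maps $(q\cdot Y+r)\setminus X$ bijectively onto $\bigl((kq)\cdot Y+(kr+h)\bigr)\setminus X^\sharp$, a finite set. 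For the reverse bound, take a witness $(Y,q,r)$ for $\iota(X^\sharp)=m$: cofinitely many $y\in Y$ satisfy $qy+r\equiv h\pmod k$ and $(qy+r-h)/k\in X$. Putting $d:=\gcd(q,k)$, solvability of $q\,t\equiv h-r\pmod k$ confines these $y$ to a single residue class $t_0$ modulo $k/d$; discarding finitely many terms and applying the affine bijection $y\mapsto(y-t_0)d/k$ to the $y\in Y$ congruent to $t_0$ produces a set $U\subseteq\NNb^+$ with $\mu^\ast(U)=\tfrac kd\mu^\ast(Y)\ge 1/m$ (again \ref{item:F4b}) and $(q/d)\cdot U+r^\sharp$ almost contained in $X$, where $r^\sharp:=(qt_0+r-h)/k\in\ZZb$; hence $\iota(X)\le m$. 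This reverse bound is the most calculation-heavy step, though it is just modular bookkeeping. Combining the two bounds yields \ref{item:I4}.

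What remains, and is the real crux, is to compute the numbers $\varrho_n$; this is where Lemma~\ref{lem:asymptotic_upper_density_of_special_sets} is indispensable, and where I expect the main obstacle to be. For $0<\alpha\le 1$ put $D_\alpha:=\bigcup_{n\ge1}\llb a_n+1,b_n\rrb$ with $a_n:=\alpha(2n-1)!+(1-\alpha)(2n)!$ and $b_n:=(2n)!+1$; Lemma~\ref{lem:asymptotic_upper_density_of_special_sets} (with $k=1$, $h=0$) gives $\mu^\ast(D_\alpha)=\alpha$. The key claim — that no denser set dilates into $D_\alpha$ — is $\sigma(D_\alpha)=\alpha$: ``$\ge$'' follows from $\sigma(D_\alpha)\ge\mu^\ast(D_\alpha)$, while for ``$\le$'' note that if $q\cdot Y+r$ is almost contained in $D_\alpha$, then $Y$ is almost contained in $\bigcup_{n\ge1}\llb c_n,d_n\rrb$ with $c_n:=\lceil(a_n+1-r)/q\rceil$ and $d_n:=\lfloor(b_n-r)/q\rfloor$; since $c_n=a_n/q+O(1)$, $d_n=b_n/q+O(1)$, and $\lim_n a_n/b_n=1-\alpha$, the sequences $(c_n-1)_{n\ge1}$ and $(d_n)_{n\ge1}$ satisfy hypotheses \ref{item:lemma_hypothesis_i}-\ref{item:lemma_hypothesis_iii} of that lemma with $\ell=1-\alpha$, so $\mu^\ast(Y)\le\mu^\ast\bigl(\bigcup_{n\ge1}\llb c_n,d_n\rrb\bigr)=\alpha$. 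Thus $\sigma(D_\alpha)=\alpha$ is positive and attained, whence $\iota(D_\alpha)=\lceil 1/\alpha\rceil$; equivalently $\iota(D_\alpha)=n$ exactly when $1/n\le\alpha<1/(n-1)$ (reading $1/0$ as $\infty$).

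Now \ref{item:I5} follows by combining $\varrho_n\le 1/(n-1)$ (first paragraph) with $\varrho_n\ge\sup\{\alpha:\ 1/n\le\alpha<1/(n-1)\}=1/(n-1)$ (taking $X=D_\alpha$), so $\varrho_n=1/(n-1)$ for $n\ge2$ and $\varrho_1=1$ (take $X=\HHb$); this clearly gives $0<\varrho_{n+1}\le\varrho_n$ for every $n$ and $\varrho_n<1$ for $n\ge3$. For \ref{item:I6}, given $n\in\NNb^+$ and $\varepsilon\in\RRb^+$, choose $\beta$ with $\tfrac1{n+1}<\beta<\tfrac1n$ and $\beta>\tfrac1{n(1+\varepsilon)}$ — possible since $\tfrac1{n(1+\varepsilon)}<\tfrac1n$ and $\tfrac1{n+1}<\tfrac1n$ — and set $Y:=D_\beta$ and $X:=D_{1/n}$. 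Then $Y\subsetneq X$, because $a_n=(2n)!+\alpha\bigl((2n-1)!-(2n)!\bigr)$ is strictly decreasing in $\alpha$, so lowering the parameter from $1/n$ to $\beta$ strictly raises each left endpoint $a_n+1$ and strictly shrinks each block $\llb a_n+1,b_n\rrb$; moreover $\iota(X)=n$ and $\iota(Y)=n+1$ by the previous paragraph, and $\varrho_{n+1}=\tfrac1n<(1+\varepsilon)\beta=(1+\varepsilon)\mu^\ast(Y)$. This verifies \ref{item:I6}, and hence all of \ref{item:I1}-\ref{item:I6}, modulo routine adjustments at small indices.
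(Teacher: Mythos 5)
Your proof is correct, and its overall architecture coincides with the paper's: the trivial witness $(X^+,1,0)$ for \ref{item:I1}--\ref{item:I2}, inheritance of witnesses for \ref{item:I3}, a two-sided witness transfer for \ref{item:I4}, and the one-parameter family of factorial-block sets ($D_\alpha$, the paper's $V_\alpha$) analysed via Lemma~\ref{lem:asymptotic_upper_density_of_special_sets} to get $\varrho_1=\varrho_2=1$, $\varrho_n=\tfrac1{n-1}$ and then \ref{item:I5}--\ref{item:I6} with exactly the same choice $X=V_{1/n}$, $Y=V_\beta$. Two local steps differ, both legitimately. For the hard inequality $\iota(X)\le\iota(k\cdot X+h)$ the paper does not exploit the congruence at all: it pigeonholes $Y$ over the residue classes mod $k$ (via subadditivity and \ref{item:F4b}) to find one class carrying at least $\tfrac1k\mu^\ast(Y)$, and transfers that piece; you instead observe that, since cofinitely many $y\in Y$ satisfy $qy+r\equiv h\pmod k$, all but finitely many of $Y$ sit in a single class mod $k/\gcd(q,k)$, so the whole density of $Y$ (in fact $\tfrac kd\mu^\ast(Y)$) transfers -- slightly sharper and it avoids the pigeonhole, at the price of the gcd bookkeeping. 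For the key computation $\iota(D_\alpha)=n$, the paper bounds admissible test sets by comparing $\mu^\ast(q\cdot Y+r)$ with $\mu^\ast(V_\alpha\cap(q\cdot\NNb+r))=\alpha/q$ (Lemma~\ref{lem:asymptotic_upper_density_of_special_sets} with the arithmetic-progression intersection) and invoking Proposition~\ref{prop:null_invariance}\ref{prop:null_invariance(iv)}; you pull the block structure back through $t\mapsto qt+r$ and apply the same lemma (with $k=1$) to the pulled-back blocks $\llb c_n,d_n\rrb$, which requires re-checking hypotheses \ref{item:lemma_hypothesis_i}--\ref{item:lemma_hypothesis_iii} for the new sequences, as you do. The remaining points you wave at (restricting test sets to $\NNb^+$, positivity of $c_n-1$ at small indices, the $t_0=0$ or $k/d=1$ degenerate cases) are genuinely routine for the upper asymptotic density, so I see no gap.
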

\begin{proof}
	To start with, observe that $
	\iota(X) \le \inf\{n \in \NNb^+: \mu^\ast(X) \ge 1/n\}$ for every $X \subseteq \HHb$,
	since $X = 1 \cdot X + 0$ and $|X \setminus  X| = 0 < \infty$. This implies $\iota(\HHb) = 1$, and shows as well that $\iota(X) = \infty$ for some $X \subseteq \HHb$ only if $\mu^\ast(X) = 0$.
	That is, $\iota$ satisfies \ref{item:I1} and \ref{item:I2}.
	
	Next, let $X \subseteq Y \subseteq \HHb$. We claim that $\iota(Y) \le \iota(X)$.
	This is obvious if $\iota(X) = \infty$, so assume $\iota(X) < \infty$. Then, there exists $\WWc \subseteq \HHb$ such that $\mu^\ast(\WWc) \ge 1/\iota(X)$ and $|(q \cdot \WWc + r) \setminus X| < \infty$ for some $q \in \NNb^+$ and $r \in \ZZb$. So we have $|(q \cdot \WWc + r) \setminus Y| < \infty$, because $S \setminus Y \subseteq S \setminus X$ for every $S \subseteq \HHb$ (by the fact that $X \subseteq Y$). This proves that $\iota(Y) \le \iota(X)$, and hence $\iota$ satisfies \ref{item:I3}, since $X$ and $Y$ were arbitrary.
	
	Lastly, we come to \ref{item:I4}. Let $X \subseteq \HHb$ be such that $\mu^\ast(X) > 0$, and pick $h, k \in \NNb^+$. Then it follows from \ref{item:I2} that $\iota(X) < \infty$ and $\iota(k \cdot X + h) < \infty$, because $\mu^\ast(k \cdot X + h) = \frac{1}{k} \mu^\ast(X) \ne 0$ by the fact that $\mu^\ast$ satisfies \ref{item:F4b}. We want to show that $\iota(X) = \iota(k \cdot  X + h)$.
	
	To begin, there exists (by definition) a set $Y \subseteq \HHb$ such that $\mu^\ast(Y) \ge 1/\iota(X)$ and $|(q \cdot Y + r) \setminus X| < \infty$ for some $q \in \NNb^+$ and $r \in \ZZb$. Therefore, we see that $|(\tilde{q} \cdot Y + \tilde{r}) \setminus (k \cdot  X + h)| < \infty$ for $\tilde{q} := qk$ and $\tilde{r} := rk + h$, which, in turn, implies $\iota(k \cdot X + h) \le \iota(X)$.
	
	As for the reverse inequality, there exists (again by definition) a set $Y \subseteq \HHb$ such that $\mu^\ast(Y) \ge 1/\iota(k  \cdot X + h)$ and $|(q \cdot Y + r) \setminus (k \cdot X + h)| < \infty$ for some $q \in \NNb^+$ and $r \in \ZZb$.
	
	Suppose for a contradiction that $\mu^\ast(Y \cap (k\cdot \HHb + l)) < \frac{1}{k} \mu^\ast(Y)$ for every $l \in \llb 0, k-1 \rrb$. Then we get by axioms \ref{item:F3} and \ref{item:F4b} and Proposition \ref{prop:mu_of_finite_sets} below that
	$$
	\mu^\ast(Y) \le \mu^\ast \left(\bigcup_{l=0}^{k-1} (Y \cap (k \cdot \HHb + l))\right)\! \le \sum_{l=0}^{k-1} \mu^\ast(Y \cap (k \cdot \HHb + l)) < \sum_{l=0}^{k-1} \frac{1}{k} \mu^\ast(Y) = \mu^\ast(Y),
	$$
	which is impossible. Accordingly, pick $l \in \llb 0, k-1 \rrb$ such that $\mu^\ast(Y \cap (k\cdot \HHb + l)) \ge \frac{1}{k} \mu^\ast(Y)$, and let $Q \subseteq \HHb$ such that $k \cdot Q + l = Y \cap (k\cdot \HHb + l) \subseteq Y$. We have
	$$
	|(qk \cdot Q + lq + r) \setminus (k \cdot X + h)| \le |(q \cdot Y + r) \setminus (k \cdot X + h)| < \infty,
	$$
	and this entails that for every $y \in Q \setminus S$, where $S \subseteq \HHb$ is a finite set, there is $x \in X$ for which $qky + lq + r - h = kx$, with the result that $\tilde{r} := \frac{1}{k}(lq + r - h) \in \mathbf Z$ and $qy + \tilde{r} = x$.
	
	It follows $|(q\cdot Q + \tilde{r}) \setminus X| < \infty$, which is enough to conclude that $\iota(X) \le \iota(k \cdot X + h)$, since $\frac{1}{k}\mu^\ast(Q) = \mu^\ast(k \cdot Q + l) \ge \frac{1}{k} \mu^\ast(Y)$, and hence $\mu^\ast(Q) \ge \mu^\ast(Y) \ge 1/\iota(k  \cdot X + h)$.
	
	To summarize, we have so far established that $\iota$ satisfies \ref{item:I1}-\ref{item:I4}, and we are left with  \ref{item:I5} and \ref{item:I6}. For this, let $V_\alpha$ be, for each $\alpha \in {]0,1[}$, the set
	\begin{equation}
	\label{equ:V_alpha}
	\bigcup_{n = 1}^\infty \left\llb \alpha  (2n-1)! + (1-\alpha) (2n)!+1, (2n)! + 1 \right\rrb\!,
	\end{equation}
	and for every $n \in \NNb^+$ define $\varrho_n := \{\mu^\ast(X): X \subseteq \HHb \text{ and } \iota(X) = n\}$. It is clear that $\varrho_1 = 1$, and we want to show that $\varrho_n = \frac{1}{n-1}$ for $n \ge 2$. To this end, we need the following:
	\begin{claim*}
		Fix $n \ge 2$ and let $\alpha \in \!{\left[\frac{1}{n}, \frac{1}{n-1}\right[}$. Then $\mu^\ast(V_\alpha) = \alpha$ and $\iota(V_\alpha) = n$.
	\end{claim*}
	\begin{proof}
		If $Y \subseteq \HHb$ and $\mu^\ast(Y) \ge \frac{1}{n-1}$, then $|(q \cdot Y + r) \setminus V_\alpha| = \infty$ for all $q \in \NNb^+$ and $r \in \ZZb$: In fact, we have from axiom \ref{item:F4b} and Lemma \ref{lem:asymptotic_upper_density_of_special_sets} that
		$$
		\mu^\ast(q \cdot Y + r) = \frac{1}{q} \mu^\ast(Y) \ge \frac{1}{(n-1)q} > \frac{\alpha}{q} = \mu^\ast(V_\alpha \cap (q \cdot \NNb + r)),
		$$
		so we get by Proposition \ref{prop:null_invariance}\ref{prop:null_invariance(iv)} that
		\begin{equation*}
		\mu^\ast((q \cdot Y + r) \setminus V_\alpha) = \mu^\ast((q \cdot Y + r) \setminus (V_\alpha \cap (q \cdot \NNb + r))) > 0,
		\end{equation*}
		which is enough to conclude that $(q \cdot Y + r) \setminus V_\alpha$ is an infinite set, see Proposition \ref{prop:mu_of_finite_sets} below, and ultimately shows that $\iota(V_\alpha) \ge n$. On the other hand, a further application of Lemma \ref{lem:asymptotic_upper_density_of_special_sets} yields $\mu^\ast(V_\alpha) = \alpha \ge \frac{1}{n}$, whence $\iota(V_\alpha) \le n$. So putting it all together, the claim is proved.
	\end{proof}
	In fact, the claim implies $\varrho_n = \frac{1}{n-1}$ for $n \ge 2$, since it shows that for every $\varepsilon \in \RRb^+$ we can find $X \subseteq \HHb$ such that $\mu^\ast(X) \ge \frac{1}{n-1} - \varepsilon$ and $\iota(X) = n$, and we have by the observation made at the beginning of the proof that $\varrho_n \le \frac{1}{n-1}$ (otherwise, there would exist $X \subseteq \HHb$ with $\iota(X) = n$ and $\mu^\ast(X) \ge 1/(n-1)$). Therefore, we see that $\iota$ satisfies \ref{item:I5}, since $0 < \varrho_{n+1} < \varrho_n < \varrho_2 = \varrho_1 = 1$ for $n \ge 3$.
	
	As for \ref{item:I6}, note that if $\alpha, \beta \in {]0,1[}$ and $\alpha < \beta$, then for every $n \in \NNb^+$ we have
	\begin{equation*}
	\label{equ:containment}
	(2n-1)! < \beta  (2n-1)! + (1-\beta)  (2n)! < \alpha  (2n-1)! + (1-\alpha)  (2n)! < (2n)!
	\end{equation*}
	and
	$$
	\lim_{n \to \infty} ((\alpha  (2n-1)! + (1-\alpha)  (2n)!) - (\beta  (2n-1)! + (1-\beta)  (2n)!)) = \infty,
	$$
	which yields $V_\alpha \subsetneq V_\beta \subseteq \NNb^+$. Hence, for all $n \in \NNb^+$ and $\varepsilon \in \RRb^+$ there exist $X, Y \subseteq \HHb$ such that $Y \subsetneq X$, $\iota(X) = n$, $\iota(Y) = n+1$, and $\varrho_{n+1} \le (1 + \varepsilon)  \mu^\ast(Y)$: By the claim above, it is enough to take $X := V_{1/n}$ and $Y = V_\alpha$ for some $\alpha \in \!\left[\max\!\left(\frac{1}{n+1},\frac{1}{(1+\varepsilon)n}\right)\!,\frac{1}{n}\right[$.
	
	So, to sum it up, $\iota$ also satisfies condition \ref{item:I6}, and the proof of the lemma is complete.
\end{proof}
Finally, we have what we need to confirm the independence of \ref{item:F2}: This is implied by the following theorem, which is straightforward by Lemmas \ref{lem:if_an_index_exists} and \ref{lem:existence_of_an_index} (we omit further details).
\begin{theorem}
	\label{th:independence_of_(F2)}
	Given $K \in [1,\infty]$, there exists a non-monotone, subadditive, $(-1)$-homogeneous, and translational invariant function $\theta^\ast: \PPc(\HHb) \to \RRb$ with the property that $\sup_{X  \in \PPc(\HHb)} \theta^\ast(X) = K$. In particular, there exists an upper quasi-density on $\HHb$ that is not an upper density.
\end{theorem}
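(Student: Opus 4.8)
The plan is to deduce the theorem directly from Lemmas \ref{lem:if_an_index_exists} and \ref{lem:existence_of_an_index}, so that the only work left is to check that their hypotheses are in force and then to unwind the notational conventions.

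First I would take $\mu^\ast$ to be the upper asymptotic density $\dd^\ast$, regarded as a function on $\PPc(\ZZb)$ and, by restriction, on $\PPc(\HHb)$. As recalled at the beginning of Section \ref{sec:definitions}, $\dd^\ast$ satisfies \ref{item:F1}--\ref{item:F5}, hence in particular \ref{item:F1}, \ref{item:F3} and \ref{item:F4b}, and $\dd^\ast(X) \le 1$ for every $X$; thus $\dd^\ast$ is an upper quasi-density on $\ZZb$. Moreover $\dd^\ast(\HHb) = \dd^\ast(\NNb^+) = 1$ regardless of whether $\HHb$ equals $\ZZb$, $\NNb$, or $\NNb^+$, since the definition of $\dd^\ast$ only sees the positive part of a set. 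So the standing hypotheses of both lemmas --- an upper quasi-density $\mu^\ast$ on $\ZZb$ with $\mu^\ast(\HHb) = 1$ --- are satisfied.

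Next I would invoke Lemma \ref{lem:existence_of_an_index} to produce an indexing function $\iota \colon \PPc(\HHb) \to \NNb^+ \cup \{\infty\}$ satisfying conditions \ref{item:I1}--\ref{item:I6}, and then feed $\iota$ together with the prescribed $K \in [1,\infty]$ into Lemma \ref{lem:if_an_index_exists}. The latter yields a nondecreasing weight sequence $(a_n)_{n \ge 1}$ with $a_1 = 1$ for which the associated function $\theta^\ast \colon \PPc(\HHb) \to \RRb$ satisfies \ref{item:F1}, \ref{item:F3} and \ref{item:F4b}, is non-monotone, and has $\sup_{X \in \PPc(\HHb)} \theta^\ast(X) = K$. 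Translating the labels, \ref{item:F3} is subadditivity, \ref{item:F4b} is equivalent to \ref{item:F4} and \ref{item:F5} together (that is, to $(-1)$-homogeneity and translational invariance), and ``non-monotone'' says that $\theta^\ast$ violates \ref{item:F2}; this establishes the first assertion of the theorem.

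Finally, for the displayed ``in particular'' I would specialise to $K = 1$. Then $\theta^\ast(X) \le 1$ for every $X \subseteq \HHb$, and combined with the fact that $\theta^\ast$ satisfies \ref{item:F1}, \ref{item:F3} and \ref{item:F4b}, this is precisely the definition of an upper quasi-density on $\HHb$; yet $\theta^\ast$ is not monotone, hence not an upper density. I do not expect a genuine obstacle at the level of this theorem: all of the real content --- constructing the indexing function (Lemma \ref{lem:existence_of_an_index}) and showing how an index induces a non-monotone, supremum-prescribed weighted perturbation of a quasi-density (Lemma \ref{lem:if_an_index_exists}) --- has already been carried out, and the only point demanding care is verifying that the choice $\mu^\ast = \dd^\ast$ meets the hypotheses of both lemmas for every admissible value of $\HHb$.
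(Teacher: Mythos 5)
Your proposal is correct and coincides with the paper's own argument: the paper derives Theorem \ref{th:independence_of_(F2)} exactly by feeding the indexing function of Lemma \ref{lem:existence_of_an_index} (built from $\dd^\ast$ on $\ZZb$) into Lemma \ref{lem:if_an_index_exists} and omitting the routine verifications you spell out. Your unwinding of the labels (\ref{item:F3} as subadditivity, \ref{item:F4b} as \ref{item:F4} plus \ref{item:F5}, non-monotonicity as failure of \ref{item:F2}) and the specialisation $K=1$ for the ``in particular'' clause are precisely the omitted details.
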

\section{Examples}
\label{sec:examples}
We turn to examine a few examples that, on the one hand, illustrate how to build uncountably many upper densities and, on the other, generalize some of the most important instances of the notion of ``density'' that can be found in the literature. We begin with the so-called ``$\alpha$-densities''.
\begin{example}
	\label{exa:generalized_asymptotic_upper_densities}
	Fix $\alpha \in [-1,\infty[$ and $a,b \in \RRb$ such that $\sum_{i \in F_n} |i|^\alpha \to \infty$ as $n \to \infty$, where we set $F_n := [an,bn] \cap \HHb$ for all $n$. We denote by $\mathfrak F$ the sequence $(F_n)_{n \ge 1}$ and consider the function
	$$
	\dd^\ast(\mathfrak{F};\alpha): \PPc(\HHb) \to \RRb: X \mapsto
	\limsup_{n \to \infty} \frac{\sum_{i \in X  \cap  F_n} |i|^\alpha}{\sum_{i \in F_n} |i|^\alpha},
	$$
	with $\frac{0}{0} := 1$. It is seen (we omit details) that the dual of $\dd^\ast(\mathfrak{F};\alpha)$ is given by
	$$
	\dd_\ast(\mathfrak{F};\alpha): \PPc(\HHb) \to \RRb: X \mapsto \liminf_{n \to \infty} \frac{\sum_{i \in X  \cap  F_n} |i|^\alpha}{\sum_{i \in F_n} |i|^\alpha},
	$$
	and we have the following:
	\begin{proposition}
		\label{prop:generalized_upper_alpha-densities}
		The function $\dd^\ast(\mathfrak{F};\alpha)$ is an upper density.
	\end{proposition}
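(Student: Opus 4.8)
The plan is to verify axioms \ref{item:F1}, \ref{item:F2}, \ref{item:F3} and \ref{item:F4b} for $\mu^\ast := \dd^\ast(\mathfrak{F};\alpha)$; since \ref{item:F4b} is equivalent to \ref{item:F4} together with \ref{item:F5}, this will show that $\mu^\ast$ is a monotone upper quasi-density, hence an upper density. Throughout, write $w(A) := \sum_{i \in A}|i|^\alpha$ for a finite $A \subseteq \HHb$ (with the standing conventions for the term $i = 0$ when $\alpha \le 0$) and $W_n := w(F_n)$, so that $\mu^\ast(X) = \limsup_n w(X \cap F_n)/W_n$, with $W_n > 0$ by condition (i) and $W_n \to \infty$ by condition (ii). The three ``soft'' axioms are essentially formal. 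From $F_n \subseteq \HHb$ we get $w(\HHb \cap F_n) = w(F_n) = W_n$, so $\mu^\ast(\HHb) = \limsup_n 1 = 1$, which is \ref{item:F1}. If $X \subseteq Y$, then $X \cap F_n \subseteq Y \cap F_n$, whence $w(X \cap F_n) \le w(Y \cap F_n)$ by nonnegativity of the weights, and \ref{item:F2} follows on passing to the $\limsup$ (which, jointly with \ref{item:F1}, also yields $\mu^\ast \le 1$, cf.\ Remark \ref{rem:upper_quasi-densities_are_nonnegative}). Finally, $w\bigl((X \cup Y)\cap F_n\bigr) \le w(X \cap F_n) + w(Y \cap F_n)$ combined with $\limsup_n(a_n+b_n) \le \limsup_n a_n + \limsup_n b_n$ gives \ref{item:F3}.

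The crux is \ref{item:F4b}. Fix $X \subseteq \HHb$ and $h, k \in \NNb^+$, abbreviate $\theta := \theta_{h,k}$, and put $G_n := \HHb \cap (k^{-1}\cdot(F_n - h))$, so that $j \mapsto kj + h$ is a bijection from $X \cap G_n$ onto $(k \cdot X + h) \cap F_n$ and therefore $w\bigl((k\cdot X + h)\cap F_n\bigr) = \sum_{j \in X \cap G_n}|kj+h|^\alpha$. I would then pass from this to $k^\alpha\, W_{\theta(n)} W_n^{-1}\, w(X \cap F_{\theta(n)}) W_{\theta(n)}^{-1}$ modulo $o(W_n)$ in two steps. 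First, replacing $G_n$ by $F_{\theta(n)}$ costs at most $\sum_{j \in \Delta_n(\mathfrak{F},\theta)}|kj+h|^\alpha$, and using the elementary two-sided bound $c_{h,k}|j|^\alpha \le |kj+h|^\alpha \le C_{h,k}|j|^\alpha$ (valid for $j \ge 1$) this is $O\bigl(\sum_{j \in \Delta_n(\mathfrak{F},\theta)}|j|^\alpha\bigr) = o(W_n)$ by \eqref{equ:limit_conditions1}. Second, on $X \cap F_{\theta(n)}$ one replaces the weight $|kj+h|^\alpha$ by $k^\alpha|j|^\alpha$: splitting the sum at a threshold $j \le c$ versus $j > c$, the low part is $O_c(1)$ (a bound independent of $n$), while on the high part $|kj+h|^\alpha = k^\alpha|j|^\alpha\bigl(1 + O(1/c)\bigr)$ uniformly in $j$, so the total error is at most $\varepsilon(c)\,k^\alpha W_{\theta(n)} + O_c(1)$ with $\varepsilon(c) \to 0$ as $c \to \infty$; dividing by $W_n$ and using that $W_{\theta(n)}/W_n$ stays bounded (by \eqref{equ:limit_conditions2}) makes this $o(1)$. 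Combining the two steps,
\begin{equation*}
\frac{w\bigl((k\cdot X + h)\cap F_n\bigr)}{W_n} = k^\alpha\,\frac{W_{\theta(n)}}{W_n}\cdot\frac{w(X \cap F_{\theta(n)})}{W_{\theta(n)}} + o(1),
\end{equation*}
and since $k^\alpha W_{\theta(n)}/W_n \to k^\alpha k^{-(\alpha+1)} = 1/k$ by \eqref{equ:limit_conditions2} while $0 \le w(X\cap F_{\theta(n)})/W_{\theta(n)} \le 1$, taking $\limsup_n$ yields $\mu^\ast(k\cdot X + h) = \tfrac1k\limsup_n w(X \cap F_{\theta(n)})/W_{\theta(n)}$. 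As $\theta$ is nondecreasing, tends to $\infty$, and has cofinite image in $\NNb^+$, the $\limsup$ along the indices $\theta(n)$ equals the full $\limsup$, i.e.\ it equals $\mu^\ast(X)$; this is \ref{item:F4b}.

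The main obstacle is the second reduction above: the weights $|kj+h|^\alpha$ arising arithmetically from $k \cdot X + h$ are not term-by-term equal to the self-similar weights $k^\alpha|j|^\alpha$, and the argument hinges on (a) the small-index terms — including the degenerate $j = 0$ contribution when $\alpha \le 0$, killed by the standing conventions — being negligible against $W_n \to \infty$, and (b) the weight comparison being uniform for $j > c$, with \eqref{equ:limit_conditions2} guaranteeing that $W_{\theta(n)}/W_n$ does not blow up as $c \to \infty$; everything else is routine $\limsup$ calculus, for which one may invoke \cite[Theorem 18.3]{Bar}. For the concrete families mentioned before the proposition, e.g.\ $F_n = \llb 1, n^q \rrb$, checking hypotheses (i)--(iii) reduces to exhibiting an explicit $\theta_{h,k}$ (heuristically $\theta_{h,k}(n) \asymp n/k^{1/q}$, so that $\llb 1, \theta_{h,k}(n)^q\rrb$ matches $k^{-1}(F_n - h)$ up to $O(1)$ elements) and estimating partial sums of the form $\sum_{i \le N}|i|^\alpha$, which is straightforward.
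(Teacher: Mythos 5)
Your proposal is correct and follows essentially the same route as the paper's proof: the axioms \ref{item:F1}--\ref{item:F3} by elementary $\limsup$ calculus, and \ref{item:F4b} by transporting the sum via $j \mapsto kj+h$, using \eqref{equ:limit_conditions1} to trade $\HHb \cap (k^{-1}\cdot(F_n-h))$ for $F_{\theta_{h,k}(n)}$, using \eqref{equ:limit_conditions2} to extract the factor $k^{-(\alpha+1)}$, and reindexing the $\limsup$ along $\theta_{h,k}$. The only (cosmetic) difference is that you control the weight comparison $|kj+h|^\alpha \approx k^\alpha|j|^\alpha$ by a cutoff inside the sums, whereas the paper truncates the set itself to $X_\varepsilon=\{i \in X: |i|\ge n_\varepsilon\}$ and invokes invariance under finite modifications, then lets $\varepsilon \to 0^+$.
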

	\begin{proof}
		It is straightforward to check that $\dd^\ast(\mathfrak{F};\alpha)$ satisfies \ref{item:F1}, \ref{item:F2}, and \ref{item:F3}. As for \ref{item:F4b}, fix $X \subseteq \HHb$ and $h, k \in \NNb^+$. Given $\varepsilon \in {]0,1[}$, there exists $n_\varepsilon \in \NNb^+$ such that
		$
		0 < (1-\varepsilon)|ik+h|^\alpha \le |ik|^\alpha \le (1+\varepsilon)|ik+h|^\alpha
		$
		whenever $|i|\ge n_\varepsilon$. Together with the fact that $\dd^\ast(\mathfrak{F};\alpha)(S) = 0$ for every finite $S \subseteq \HHb$ (here we need that $\sum_{i\in F_n} |i|^\alpha = \infty$ as $n \to \infty$), this yields
		\begin{equation}
		\label{equ:translational_inequality_generalized_natural_updensity}
		\left\{
		\begin{array}{ll}
		\!\! \dd^\ast(\mathfrak{F};\alpha)(k \cdot X + h) = \dd^\ast(\mathfrak{F};\alpha)(k \cdot X_\varepsilon + h) \\
		\!\! (1-\varepsilon)k^\alpha u_\varepsilon \le \dd^\ast(\mathfrak{F};\alpha)(k \cdot X + h)
		\le (1+\varepsilon)k^\alpha u_\varepsilon
		\end{array}
		\right.\!\!,
		\end{equation}
		where, for ease of notation, we put
		\begin{equation}
		\label{equ:manipulations}
		u_\varepsilon := \limsup_{n \to \infty} \frac{\sum_{i \in X_\varepsilon  \cap  (k^{-1} \cdot  (F_n - h))} |i|^\alpha}{\sum_{i \in F_n} |i|^\alpha} =
		\limsup_{n \to \infty} \frac{\sum_{i \in X_\varepsilon  \cap  F_{\lfloor n/k \rfloor}} |i|^\alpha + \delta(n)}{\sum_{i \in F_n} |i|^\alpha},
		\end{equation}
		with
		$$
		X_\varepsilon := \{i \in X: |i|\ge n_\varepsilon\}
		\quad\text{and}\quad
		\delta(n) := \sum_{i \in X_\varepsilon  \cap  (k^{-1} \cdot  (F_n - h))} |i|^\alpha - \sum_{i \in X_\varepsilon  \cap  F_{\lfloor n/k \rfloor}} |i|^\alpha.
		$$
		Denote by $\Delta(n)$ the symmetric difference of $\HHb \cap (k^{-1} \cdot ( F_n-h))$ and $ F_{\lfloor n/k \rfloor}$. By the triangle inequality (we skip some details), we obtain
		\begin{equation*}
		0 \le \limsup_{n \to \infty} \frac{|\delta(n)|}{\sum_{i \in F_n} |i|^\alpha}
		\le \limsup_{n \to \infty} \frac{\sum_{i \in \Delta(n)} |i|^\alpha}{\sum_{i \in F_n} |i|^\alpha} = 0,
		\end{equation*}
		which, combined with \eqref{equ:manipulations}, implies
		\begin{equation*}
		\begin{split}
		u_\varepsilon
		& =
		\limsup_{n \to \infty} \frac{\sum_{i \in X_\varepsilon  \cap  F_{\lfloor n/k \rfloor}} |i|^\alpha}{\sum_{i \in F_n} |i|^\alpha}
		= \frac{1}{k^{\alpha+1}} \limsup_{n \to \infty} \frac{\sum_{i \in X_\varepsilon  \cap  F_{\lfloor n/k \rfloor}} |i|^\alpha}{\sum_{i \in F_{\lfloor n/k \rfloor}} |i|^\alpha}
		= \frac{1}{k^{\alpha+1}} \dd^\ast(\mathfrak{F};\alpha)(X)
		\end{split}
		\end{equation*}
		(note that $\dd^\ast(\mathfrak{F};\alpha)(X_\varepsilon) = \dd^\ast(\mathfrak{F};\alpha)(X)$, since $X_\varepsilon \subseteq X$ and $X \setminus X_\varepsilon$ is finite). So taking the limit of \eqref{equ:translational_inequality_generalized_natural_updensity} as $\varepsilon \to 0^+$, we get the desired conclusion.
	\end{proof}
	In continuity with \cite[Definition 1.4]{GiGr}, we call $\dd^\ast(\mathfrak{F};\alpha)$ and $\dd_\ast(\mathfrak{F};\alpha)$, respectively, the \textit{upper} and \textit{lower $\alpha$-density relative to $\mathfrak{F}$}.
	In particular, if $a=0$ and $b=1$, then $\dd^\ast(\mathfrak{F};-1)$ is the upper logarithmic density, cf. \cite[Chapter III.1, \S{ }1.2]{Tene}, and $\dd^\ast(\mathfrak{F};0)$ is the upper asymptotic density.
	
	Then it is not difficult to verify, see \cite[p. 37]{SZ}, that for every $\alpha \in {]-1,\infty[}$ the upper $\alpha$-density of the set $A := \bigcup_{n = 1}^\infty \llb 2^{2n}+1, 2^{2n+1} \rrb \subseteq \mathbf N^+$ is equal to $2^{\alpha+1}/(2^{\alpha+1} + 1)$. Clearly, this shows that the collection of all upper densities on $\mathbf H$ has at least the same cardinality of $\mathbf R$ (and hence is uncountable).
\end{example}
Another interesting example is offered by the upper Buck density (on $\mathbf H$):

\begin{definition}
	The upper Buck density \textup{(}on $\mathbf H$\textup{)} is the function
	\begin{equation}
	\label{equ:def-buck}
	\PPc(\HHb) \to \RRb: X \mapsto \inf_{S  \in \AAc: X  \subseteq  S} {\sf d}^\ast(S),
	\end{equation}
	where $\AAc$ denotes
	the collection of all sets that can be expressed as a finite union of arithmetic progressions of $\mathbf H$ \textup{(}that is, sets of the forms $k \cdot \mathbf H + h$ with $k \in \mathbf N^+$ and $h \in \mathbf H$, see \textup{\S{ }\ref{sec:notations}}\textup{)}.
\end{definition}
See, e.g., \cite{Bu0}, \cite[\S{ }7]{Niv} and \cite{SaTi} for the case $\mathbf H = \mathbf N^+$. This is generalized by the following:
\begin{example}
	\label{exa:buck's_measure_density}
	Let $\mu^\ast$ be an upper quasi-density on $\HHb$ and $\CCc$ a subfamily of $\PPc(\HHb)$ such that:
	\begin{enumerate}[label={\rm (\textsc{b}\arabic{*})}]
		\item\label{item:Buck_(B1)} $\HHb \in \CCc$;
		\item\label{item:Buck_(B2)} $X \cup Y \in \CCc$ for all $X,Y \in \CCc$;
		\item\label{item:Buck_(B3)} $k \cdot X + h \in \CCc$, for some $X \subseteq \HHb$ and $h, k \in \NNb^+$, if and only if $X \in \CCc$;
		\item\label{item:Buck_(B4b)} $X \cap (k \cdot \HHb + h) \in \CCc$ for all $X \in \CCc$ and $h,k \in \NNb^+$.
	\end{enumerate}
	In particular, it is seen that $\AAc$ satisfies \ref{item:Buck_(B1)}-\ref{item:Buck_(B4b)} if $\HHb = \ZZb$ or $\HHb = \NNb$, but not if $\HHb = \NNb^+$. On the other hand, it is not difficult to verify that conditions \ref{item:Buck_(B1)}-\ref{item:Buck_(B4b)} are all satisfied by taking
	\begin{equation}
	\label{equ:Ctheta}
	\CCc = \CCc_\theta := \{X \cup Y: X \in \AAc\text{ and } \theta(Y) = 0\} \subseteq \PPc(\HHb),
	\end{equation}
	provided $\theta$ is a function $\PPc(\HHb) \to \RRb$ such that:
	\begin{enumerate}[label={\rm (\roman{*})}]
		\item\label{item:buck's_families_theta_0} $\theta(X) = 0$ for every finite $X \subseteq \HHb$;
		\item\label{item:buck's_families_theta_i} $\theta(X \cup Y) \le \theta(X) + \theta(Y)$ for all $X,Y \subseteq \HHb$ (so $\theta$ is non-negative, cf. Proposition \ref{prop:elementary_properties_of_d-pairs}\ref{item:prop:elementary_properties_of_d-pairs(vib)});
		\item\label{item:buck's_families_theta_ii} $\theta(X) \le \theta(Y)$ whenever $X \subseteq Y \subseteq \HHb$;
		\item\label{item:buck's_families_theta_iii} $\theta(k \cdot X + h) = 0$, for some $X \subseteq \HHb$ and $h, k \in \NNb^+$, if and only if $\theta(X) = 0$.
	\end{enumerate}
	For instance, these conditions are fulfilled if $\theta$ is an upper density on $\HHb$, see Proposition \ref{prop:mu_of_finite_sets} below for \ref{item:buck's_families_theta_0}; the characteristic function of the infinite subsets of $\HHb$ (this is not an upper density), in which case $\CCc_\theta$ is the set of all subsets of $\PPc(\HHb)$ that can be represented as a finite union of arithmetic progressions of $\HHb$, or differ from these by finitely many integers; or the constant function $\PPc(\HHb) \to \RRb: X \mapsto 0$ (this is not an upper density either), in which case $\CCc_\theta = \PPc(\HHb)$.
	
	With this in mind, consider the function
	$$
	\bbf^\ast(\CCc; \mu^\ast): \PPc(\HHb) \to \RRb: X \mapsto \inf_{S  \in \CCc: X  \subseteq  S} \mu^\ast(S),
	$$
	which we denote by $\bbf^\ast$ whenever $\CCc$ and $\mu^\ast$ are clear from the context, and is well defined by the fact that $\HHb \in \CCc$ and $\mu^\ast(S) \in [0,1]$ for all $S \in \CCc$ (by Remark \ref{rem:upper_quasi-densities_are_non-negative}). We have:
	
	\begin{proposition}
		\label{prop:generalized_Buck's}
		The function $\bbf^\ast(\CCc; \mu^\ast)$ is an upper density.
	\end{proposition}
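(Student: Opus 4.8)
The plan is to verify that $\bbf^\ast := \bbf^\ast(\CCc;\mu^\ast)$ meets the five requirements in the definition of an upper density: $\imag(\bbf^\ast)\subseteq[0,1]$, condition \ref{item:F1}, monotonicity \ref{item:F2}, subadditivity \ref{item:F3}, and \ref{item:F4b} (which encodes \ref{item:F4} and \ref{item:F5} together). Three of these are essentially formal. For \ref{item:F1} and boundedness: since $\HHb\in\CCc$ by \ref{item:Buck_(B1)} and $\HHb$ is the only $S\in\PPc(\HHb)$ with $\HHb\subseteq S$, we get $\bbf^\ast(\HHb)=\mu^\ast(\HHb)=1$; and $0\le\bbf^\ast(X)\le1$ for every $X$ follows from $\mu^\ast\ge0$ (Remark \ref{rem:upper_quasi-densities_are_nonnegative}) and from $X\subseteq\HHb\in\CCc$. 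Monotonicity is immediate from the definition: if $X\subseteq Y$ then $\{S\in\CCc:Y\subseteq S\}\subseteq\{S\in\CCc:X\subseteq S\}$, so the infimum over the smaller family is at least as large. For subadditivity, given $\varepsilon>0$ I would pick $S_X,S_Y\in\CCc$ with $X\subseteq S_X$, $Y\subseteq S_Y$, $\mu^\ast(S_X)\le\bbf^\ast(X)+\varepsilon$, $\mu^\ast(S_Y)\le\bbf^\ast(Y)+\varepsilon$; then $S_X\cup S_Y\in\CCc$ by \ref{item:Buck_(B2)}, it covers $X\cup Y$, and subadditivity of $\mu^\ast$ yields $\bbf^\ast(X\cup Y)\le\mu^\ast(S_X)+\mu^\ast(S_Y)\le\bbf^\ast(X)+\bbf^\ast(Y)+2\varepsilon$; now let $\varepsilon\to0^+$.

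The real content is \ref{item:F4b}: $\bbf^\ast(k\cdot X+h)=\frac1k\bbf^\ast(X)$ for all $X\subseteq\HHb$ and $h,k\in\NNb^+$. The inequality ``$\le$'' is easy: for $S\in\CCc$ with $X\subseteq S$, the set $k\cdot S+h$ lies in $\CCc$ by \ref{item:Buck_(B3)}, contains $k\cdot X+h$, and has $\mu^\ast(k\cdot S+h)=\frac1k\mu^\ast(S)$ by \ref{item:F4b} for $\mu^\ast$; passing to the infimum over $S$ gives $\bbf^\ast(k\cdot X+h)\le\frac1k\bbf^\ast(X)$. For ``$\ge$'', I would take an arbitrary $T\in\CCc$ with $k\cdot X+h\subseteq T$ and set $\widehat T:=T\cap(k\cdot\HHb+h)$, which lies in $\CCc$ by \ref{item:Buck_(B4b)}. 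Since $k\cdot X+h\subseteq k\cdot\HHb+h$ we have $k\cdot X+h\subseteq\widehat T$; and because $\widehat T\subseteq k\cdot\HHb+h$, we may write $\widehat T=k\cdot Q+h$ with $Q:=\{a\in\HHb:ka+h\in T\}$, and then \ref{item:Buck_(B3)} forces $Q\in\CCc$, while injectivity of $a\mapsto ka+h$ gives $X\subseteq Q$. Hence $\bbf^\ast(X)\le\mu^\ast(Q)=k\,\mu^\ast(\widehat T)$, the last equality again by \ref{item:F4b} for $\mu^\ast$. What is left is to compare $\mu^\ast(\widehat T)$ with $\mu^\ast(T)$: as soon as $\mu^\ast(\widehat T)\le\mu^\ast(T)$ we get $\mu^\ast(T)\ge\frac1k\bbf^\ast(X)$, and the infimum over $T$ finishes the proof.

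The one genuine obstacle is thus the inequality $\mu^\ast\big(T\cap(k\cdot\HHb+h)\big)\le\mu^\ast(T)$ for $T\in\CCc$. When $\mu^\ast$ is an upper density this is monotonicity and there is nothing to do; in general the plan is to exploit that $\HHb$ splits, up to a finite set, into the $k$ arithmetic progressions $k\cdot\HHb+h+l$ ($l=0,\dots,k-1$), that each piece $T\cap(k\cdot\HHb+h+l)$ lies in $\CCc$ by \ref{item:Buck_(B4b)}, that $\mu^\ast(k\cdot\HHb+h+l)=\frac1k$ by \ref{item:F4b}, and that $\mu^\ast$ restricts to the asymptotic density on every finite union of arithmetic progressions — a fact one gets by applying subadditivity of $\mu^\ast$ and the identity $\mu^\ast(q\cdot\HHb+r)=\frac1q$ simultaneously to such a set and to its complement in $\HHb$. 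Coupling this with Proposition \ref{prop:mu_of_finite_sets} and the finite subadditivity of $\mu^\ast$ (Proposition \ref{prop:elementary_properties_of_d-pairs}\ref{item:prop:elementary_properties_of_d-pairs(iib)}) should constrain $\mu^\ast(T)$ tightly enough to conclude. A second, purely routine, nuisance is that these residue-class decompositions of $\HHb$ are exact when $\HHb=\ZZb$ but only hold modulo finitely many integers when $\HHb\in\{\NNb,\NNb^+\}$; by Proposition \ref{prop:mu_of_finite_sets} this discrepancy does not affect any $\mu^\ast$-value, so it can be absorbed throughout without further comment.
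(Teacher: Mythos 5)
Your treatment of \ref{item:F1}, of the bound $0\le\bbf^\ast\le1$, of monotonicity, of subadditivity, and of the inequality $\bbf^\ast(k\cdot X+h)\le\frac{1}{k}\bbf^\ast(X)$ is correct and coincides with the paper's argument, and your reduction of the reverse inequality to the single comparison $\mu^\ast(\widehat{T})\le\mu^\ast(T)$, with $\widehat{T}:=T\cap(k\cdot\HHb+h)$, is exactly the manoeuvre the paper performs via \ref{item:Buck_(B4b)} and \ref{item:Buck_(B3)} when it replaces a competitor $T$ by $k\cdot S+h$ with $S\in\CCc$ and $X\subseteq S$. The genuine gap is your last step: that comparison cannot be derived from \ref{item:F1}, \ref{item:F3}, \ref{item:F4b} and $\mu^\ast\le1$, because it is false for upper quasi-densities in general, and the offending sets may well lie in $\CCc$ (recall that $\CCc=\PPc(\HHb)$ is an admissible choice). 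Concretely, let $\theta^\ast=a_{\iota(\cdot)}\,\dd^\ast(\cdot)$ be the non-monotone upper quasi-density produced by Lemma \ref{lem:if_an_index_exists} (case $K=1$) with the index $\iota$ of Lemma \ref{lem:existence_of_an_index}: there $\varrho_1=\varrho_2=1$ and $\varrho_n=\frac{1}{n-1}$ for $n\ge2$, so $v=3$ and $a_4=\frac{9}{4}$. Fix $\alpha$ with $\frac14\le\alpha<\frac13$, let $V_\alpha$ be the set in \eqref{equ:V_alpha}, and put $T:=(2\cdot V_\alpha+2)\cup(q\cdot\HHb+1)$ with $q\ge8$ even. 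Then $T\cap(2\cdot\HHb+2)=2\cdot V_\alpha+2$, and since $\iota(2\cdot V_\alpha+2)=\iota(V_\alpha)=4$ by \ref{item:I4} and the Claim in the proof of Lemma \ref{lem:existence_of_an_index}, while $\dd^\ast(2\cdot V_\alpha+2)=\frac{\alpha}{2}$, we get $\theta^\ast(T\cap(2\cdot\HHb+2))=\frac{9\alpha}{8}$; on the other hand $q\cdot\HHb+1\subseteq T$ forces $\iota(T)=1$, so $\theta^\ast(T)=\dd^\ast(T)\le\frac{\alpha}{2}+\frac{1}{q}<\frac{9\alpha}{8}$. Thus monotonicity of an upper quasi-density can fail precisely along intersections with arithmetic progressions, and your closing plan cannot repair this: subadditivity only yields the upper bound $\mu^\ast(T)\le\sum_{l}\mu^\ast(T\cap(k\cdot\HHb+h+l))$, never a lower bound for $\mu^\ast(T)$ in terms of one of its pieces, and Proposition \ref{prop:unions_of_translates} concerns sets that \emph{are} finite unions of arithmetic progressions, which a generic $T\in\CCc$ is not.

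As written, then, your argument establishes the proposition only under the extra hypothesis that $\mu^\ast$ satisfies \ref{item:F2}, i.e., is already an upper density, whereas the statement (and its interest, cf. the ``smoothing effect'' remark following it) concerns an arbitrary upper quasi-density $\mu^\ast$. Be aware, too, that the paper's own proof of \ref{item:F4b} passes from $\inf_{T}\mu^\ast(T)$ to $\inf_{S}\mu^\ast(k\cdot S+h)$ through the very same section $T\mapsto T\cap(k\cdot\HHb+h)$, so the comparison you isolate is tacit there rather than spelled out; a complete treatment of the ``$\ge$'' half for non-monotone $\mu^\ast$ must avoid comparing $\mu^\ast(T)$ with $\mu^\ast(T\cap(k\cdot\HHb+h))$ altogether (for instance by exploiting that, by \ref{item:Buck_(B1)}--\ref{item:Buck_(B3)}, $\CCc$ contains every finite union of arithmetic progressions, so $X$ has many more admissible covers than the single pullback $Q$), not prove the pointwise inequality you aim at, which the example above shows to be hopeless.
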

	\begin{proof}
		First, it is clear that $\bbf^\ast(\HHb) = 1$, because $\HHb \subseteq S$ for some $S \in \CCc$ only if $S = \HHb$, and on the other hand, $\HHb \in \CCc$ by \ref{item:Buck_(B1)} and $\mu^\ast(\HHb) = 1$ by the fact that $\mu^\ast$ satisfies \ref{item:F1}.
		
		Second, if $X \subseteq Y \subseteq \HHb$ and $Y \subseteq S \in \CCc$, then of course $X \subseteq S$. Therefore, we obtain that
		$$
		\bbf^\ast(X) = \inf_{S  \in \CCc: X  \subseteq  S} \mu^\ast(S) \le \inf_{S  \in \CCc: Y  \subseteq  S} \mu^\ast(S) = \bbf^\ast(Y).
		$$
		Third, if $X,Y \subseteq \HHb$ and $S,T \in \CCc$ are such that $X \subseteq S$ and $Y \subseteq T$, then $S \cup T \in \CCc$ by \ref{item:Buck_(B2)} and $X \cup Y \subseteq S \cup T$. This, together with the subadditivity of $\mu^\ast$, gives
		\begin{equation*}
		\begin{split}
		\bbf^\ast(X \cup Y)
		& \le \inf_{S,T  \in \CCc: X  \subseteq  S,  Y  \subseteq  T} \mu^\ast(S \cup T) \le \inf_{S,T  \in \CCc: X  \subseteq  S,  Y  \subseteq  T} (\mu^\ast(S) + \mu^\ast(T)) \\
		& = \inf_{S  \in \CCc: X  \subseteq  S} \mu^\ast(S) + \inf_{T  \in \CCc: Y  \subseteq  T} \mu^\ast(T) = \bbf^\ast(X) + \bbf^\ast(Y).
		\end{split}
		\end{equation*}
		Lastly, pick $X \subseteq \HHb$ and $h, k \in \NNb^+$. If $X \subseteq S \in \CCc$, then \ref{item:Buck_(B3)} yields $k \cdot X + h \subseteq k \cdot S + h \in \CCc$. Conversely, if $k \cdot X + h \subseteq T$ for some $T \in \CCc$, then $k \cdot X + h \subseteq T \cap (k \cdot \HHb + h) = k \cdot S + h$ for some $S \subseteq \HHb$, which implies, by \ref{item:Buck_(B3)} and \ref{item:Buck_(B4b)}, that $X \subseteq S \in \CCc$. Thus, we find
		$$
		\bbf^\ast(k \cdot X + h) = \inf_{T  \in \CCc: k  \cdot X + h \subseteq  T} \mu^\ast(T) = \inf_{S  \in \CCc: X \subseteq  S} \mu^\ast(k \cdot S + h) = \frac{1}{k} \bbf^\ast(X),
		$$
		where we have used that $\mu^\ast$ satisfies \ref{item:F4b}. It follows that $\bbf^\ast$ is an upper density.
	\end{proof}
	In view of Proposition \ref{prop:generalized_Buck's}, we will refer to $\bbf^\ast(\CCc; \mu^\ast)$ as the \textit{upper Buck density \textup{(}on $\HHb$\textup{)} relative to the pair $(\CCc,\mu^\ast)$}.
	If $\mu^\ast$ is the upper asymptotic density
	and $\CCc$ is the set $\CCc_\theta$ determined by \eqref{equ:Ctheta} when $\theta$ is the characteristic function of the infinite subsets of $\HHb$, then $\bbf^\ast(\CCc; \mu^\ast)$ is the upper Buck density, as given by \eqref{equ:def-buck}; in particular, Proposition \ref{prop:generalized_Buck's} generalizes \cite[Corollaries 2 and 3]{Pas}.
	
	It is perhaps interesting to note that the definition of $\bbf^\ast(\CCc; \mu^\ast)$ produces a ``smoothing effect'' on $\mu^\ast$, in the sense that $\bbf^\ast(\CCc; \mu^\ast)$ is monotone, no matter if $\mu^\ast$ is.
	
	In addition, we have the following result, which, together with Proposition \ref{prop:invariance_under_unions_with_finite_sets} in \S{ }\ref{sec:closing_remarks}, proves that the usual definition of the upper Buck density on $\NNb^+$ can be (slightly) simplified by establishing, as we do, that the \textit{upper Buck density on $\HHb$} is given by $\bbf^\ast(\mathscr{A}; \dd^\ast)$ if $\HHb = \ZZb$, and by the restriction to $\PPc(\HHb)$ of the upper Buck density on $\ZZb$ otherwise (recall that $\AAc$ does not satisfy \ref{item:Buck_(B1)}-\ref{item:Buck_(B4b)} if $\HHb = \NNb^+$, and that $\dd^\ast$ accounts only for the positive part of a subset of $\HHb$).
	
	\begin{proposition}
		Let $\CCc^\sharp := \{X \cup \mathcal{H}: X \in \CCc, \mathcal{H} \subseteq \HHb, |\mathcal{H}| < \infty\}$. Then 
		$$
		\bbf^\ast(\CCc; \mu^\ast) = \bbf^\ast(\CCc^\sharp; \mu^\ast).
		$$
	\end{proposition}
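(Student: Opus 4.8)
The plan is to show that for every $X \subseteq \HHb$ the infima defining $\bbf^\ast(\CCc;\mu^\ast)(X)$ and $\bbf^\ast(\CCc^\sharp;\mu^\ast)(X)$ agree. One inequality is immediate from $\CCc \subseteq \CCc^\sharp$; the other will rest on the fact that finite modifications change neither $\bbf^\ast(\CCc;\mu^\ast)$ nor $\mu^\ast$. As a preliminary remark, $\CCc^\sharp$ satisfies \ref{item:Buck_(B1)}--\ref{item:Buck_(B4b)}: \ref{item:Buck_(B1)} holds since $\HHb \in \CCc \subseteq \CCc^\sharp$; \ref{item:Buck_(B2)} since $(S_1 \cup \mathcal{H}_1) \cup (S_2 \cup \mathcal{H}_2) = (S_1 \cup S_2) \cup (\mathcal{H}_1 \cup \mathcal{H}_2)$; and \ref{item:Buck_(B3)}--\ref{item:Buck_(B4b)} by applying the maps $Z \mapsto k \cdot Z + h$ and $Z \mapsto Z \cap (k \cdot \HHb + h)$ to a representation $S_0 \cup \mathcal{H}$ of a member of $\CCc^\sharp$, using \ref{item:Buck_(B3)}--\ref{item:Buck_(B4b)} for $\CCc$ and the fact that $x \mapsto kx+h$ sends finite sets to finite sets. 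In particular $\bbf^\ast(\CCc^\sharp;\mu^\ast)$ is well-defined (and, by Proposition \ref{prop:generalized_Buck's}, an upper density), although only its well-definedness will be used.

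Since $\{S \in \CCc : X \subseteq S\} \subseteq \{S' \in \CCc^\sharp : X \subseteq S'\}$, the infimum over the larger set of reals is no larger, so $\bbf^\ast(\CCc^\sharp;\mu^\ast)(X) \le \bbf^\ast(\CCc;\mu^\ast)(X)$.

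For the reverse inequality, fix $S' \in \CCc^\sharp$ with $X \subseteq S'$ and write $S' = S_0 \cup \mathcal{H}$ with $S_0 \in \CCc$ and $\mathcal{H} \subseteq \HHb$ finite. Then $X \setminus \mathcal{H} \subseteq S_0$, and $S_0$ is admissible in the infimum defining $\bbf^\ast(\CCc;\mu^\ast)(X \setminus \mathcal{H})$, so $\bbf^\ast(\CCc;\mu^\ast)(X \setminus \mathcal{H}) \le \mu^\ast(S_0)$. Next, $X \triangle (X \setminus \mathcal{H}) = X \cap \mathcal{H}$ is finite, hence $\bbf^\ast(\CCc;\mu^\ast)(X \cap \mathcal{H}) = 0$ by Proposition \ref{prop:mu_of_finite_sets} applied to the upper quasi-density $\bbf^\ast(\CCc;\mu^\ast)$ (which is an upper density, and so monotone and subadditive, by Proposition \ref{prop:generalized_Buck's}); therefore Proposition \ref{prop:null_invariance}\ref{prop:null_invariance(i)} gives $\bbf^\ast(\CCc;\mu^\ast)(X) = \bbf^\ast(\CCc;\mu^\ast)(X \setminus \mathcal{H}) \le \mu^\ast(S_0)$. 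Finally, $S_0$ and $S' = S_0 \cup \mathcal{H}$ differ by the finite set $\mathcal{H} \setminus S_0$, so $\mu^\ast(S_0) = \mu^\ast(S')$ because $\mu^\ast$, being an upper quasi-density, is invariant under unions with finite sets (Proposition \ref{prop:invariance_under_unions_with_finite_sets}). Hence $\bbf^\ast(\CCc;\mu^\ast)(X) \le \mu^\ast(S')$, and taking the infimum over all admissible $S' \in \CCc^\sharp$ yields $\bbf^\ast(\CCc;\mu^\ast)(X) \le \bbf^\ast(\CCc^\sharp;\mu^\ast)(X)$. Combined with the previous paragraph, this gives the claimed equality.

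The only genuinely delicate step is $\mu^\ast(S_0) = \mu^\ast(S')$. Subadditivity together with the vanishing of $\mu^\ast$ on finite sets gives only $\mu^\ast(S_0 \cup \mathcal{H}) \le \mu^\ast(S_0)$, and $\mu^\ast$ need not be monotone, so Proposition \ref{prop:null_invariance} does not apply to it; the reverse inequality $\mu^\ast(S_0) \le \mu^\ast(S_0 \cup \mathcal{H})$ is exactly what we import from Proposition \ref{prop:invariance_under_unions_with_finite_sets}. This is essential rather than cosmetic: were an upper quasi-density allowed to strictly decrease upon adjoining finitely many points, passing from $\CCc$ to $\CCc^\sharp$ could strictly lower the infimum and the statement would fail. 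Everything else is routine.
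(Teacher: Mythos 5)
Your first inequality and the well-definedness remarks are fine, but the step you yourself flag as delicate is exactly where the argument breaks: you apply Proposition \ref{prop:invariance_under_unions_with_finite_sets} to $\mu^\ast$ to get $\mu^\ast(S_0) \le \mu^\ast(S_0 \cup \mathcal{H})$, yet that proposition is stated and proved only for upper \emph{densities} (i.e.\ monotone upper quasi-densities), while in Example \ref{exa:buck's_measure_density} the function $\mu^\ast$ is only assumed to be an upper quasi-density. Whether an upper quasi-density can take different values on two sets with finite symmetric difference is precisely Question \ref{open:invariance_of_upper_quasi-densities_for_finite_symmetric_diff}, which the paper leaves open; subadditivity plus Proposition \ref{prop:mu_of_finite_sets} only gives the inequality $\mu^\ast(S_0 \cup \mathcal{H}) \le \mu^\ast(S_0)$, in the direction you do not need. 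So the key comparison $\mu^\ast(S_0) = \mu^\ast(S')$ is unsupported, and your closing claim that the statement ``would fail'' without such invariance is also mistaken: the proposition is true for every upper quasi-density satisfying the standing hypotheses.

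The paper's proof shows how to bypass the issue without ever comparing $\mu^\ast(S_0)$ with $\mu^\ast(S_0 \cup \mathcal{H})$. Fix $\varepsilon > 0$ and choose $T = Y \cup \mathcal{H} \in \CCc^\sharp$ with $X \subseteq T$, $Y \in \CCc$, $\mathcal{H}$ finite, and $\mu^\ast(T) \le \bbf^\ast(\CCc^\sharp;\mu^\ast)(X) + \frac{\varepsilon}{2}$. Pick $k$ with $\frac{1}{k}|\mathcal{H}| \le \frac{\varepsilon}{2}$ and set $\mathcal{V} := \bigcup_{h \in \mathcal{H}} (k \cdot \HHb + h)$; then $(Y+k) \cup \mathcal{V} = (T+k) \cup \mathcal{V} \in \CCc$ by \ref{item:Buck_(B1)}--\ref{item:Buck_(B3)}, and $X + k \subseteq (T+k) \cup \mathcal{V}$. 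Using the translational invariance of $\bbf^\ast(\CCc;\mu^\ast)$ (it is an upper density by Proposition \ref{prop:generalized_Buck's}) together with the translational invariance and subadditivity of $\mu^\ast$ (both available for a quasi-density), one gets $\bbf^\ast(\CCc;\mu^\ast)(X) = \bbf^\ast(\CCc;\mu^\ast)(X+k) \le \mu^\ast(T+k) + \mu^\ast(\mathcal{V}) \le \mu^\ast(T) + \frac{|\mathcal{H}|}{k} \le \bbf^\ast(\CCc^\sharp;\mu^\ast)(X) + \varepsilon$. The point is that the finite set is absorbed into a union of $|\mathcal{H}|$ arithmetic progressions of total upper quasi-density at most $\varepsilon/2$ after a shift, so only subadditivity and \ref{item:F4b} are needed, never monotonicity or finite-modification invariance of $\mu^\ast$. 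To repair your write-up you would need either to restrict the proposition to upper densities $\mu^\ast$ (weaker than what is claimed) or to replace the offending step by an argument of this shifted-covering type.
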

	\begin{proof}
		Pick $X \subseteq \HHb$. Since $\CCc \subseteq \CCc^\sharp$, it is immediate that $\bbf^\ast(\CCc^\sharp; \mu^\ast)(X) \le \bbf^\ast(\CCc; \mu^\ast)(X)$, where we use, as in the proof of Proposition \ref{prop:generalized_Buck's}, that if $\emptyset \neq A \subseteq B \subseteq \mathbf R$, then $\inf(B) \le \inf(A)$.
		It remains to prove that $\bbf^\ast(\CCc; \mu^\ast)(X) \le \bbf^\ast(\CCc^\sharp; \mu^\ast)(X)$.
		
		For this, fix a real $\varepsilon > 0$. By definition, there exists $T \in \CCc^\sharp$ for which $X \subseteq T$ and $\mu^\ast(T) \le \bbf^\ast(\CCc^\sharp; \mu^\ast)(X) + \frac{\varepsilon}{2}$. On the other hand, $T \in \mathscr{C}^\sharp$ if and only if $T = Y \cup \mathcal{H}$ for some $Y \in \CCc$ and $\mathcal{H} \subseteq \HHb$ with $|\mathcal{H}| < \infty$.
		So, set $\mathcal{V} := \bigcup_{h \in \mathcal{H}}(k \cdot \HHb + h)$, where $k$ is an integer $\ge \frac{2}{\varepsilon}|\mathcal{H}|$, and notice that $(Y + k) \cup \mathcal{V} = (T + k) \cup \mathcal{V}$.
		
		It follows from the above and conditions \ref{item:Buck_(B1)}-\ref{item:Buck_(B3)} that $X+k \subseteq (T + k) \cup \mathcal{V} \in \CCc$,
		and this in turn implies, by the fact that $\bbf^\ast(\CCc; \mu^\ast)(X)$ is translational invariant (by Proposition \ref{prop:generalized_Buck's}) and $\mu^\ast$ is translational invariant and subadditive (by hypothesis), that
		\begin{equation*}
		\begin{split}
		\bbf^\ast(\CCc; \mu^\ast)(X)
		& = \bbf^\ast(\CCc; \mu^\ast)(X+k) \le \mu^\ast(T+k) + \mu^\ast(\mathcal{V}) \\
		& = \mu^\ast(T) + \frac{|\mathcal{H}|}{k} \le \bbf^\ast(\CCc^\sharp; \mu^\ast)(X) + \varepsilon.
		\end{split}
		\end{equation*}
		This is enough to complete the proof, since $\varepsilon$ was arbitrary.
	\end{proof}
	For future reference, we take the Buck density on $\HHb$ to be the density induced by $\bbf^\ast(\mathscr{A}; \dd^\ast)$, and we call the dual of $\bbf^\ast(\mathscr{A}; \dd^\ast)$ the lower Buck density on $\HHb$.
	
	We are left with the question of providing a ``convenient expression'' for the lower dual of $\bbf^\ast(\CCc; \mu^\ast)$, here denoted by $\bbf_\ast(\CCc; \mu^\ast)$:
	This seems unfeasible in general. However, if we assume
	\begin{enumerate}[resume, label={\rm (\textsc{b}\arabic{*})}]
		\item\label{item:Buck_(B4)} $X^c \in \CCc$ whenever $X \in \CCc$ (namely, $\CCc$ is closed under complementation),
	\end{enumerate}
	then it is not difficult to verify that $\bbf_\ast(\CCc; \mu^\ast)(X)$ is given by the function
	$$
	\PPc(\HHb) \to \RRb: X \mapsto \sup_{T  \in \CCc: T  \subseteq  X} \mu_\ast(T),
	$$
	with $\mu_\ast$ being the lower dual of $\mu^\ast$. In fact, if $\CCc$ satisfies \ref{item:Buck_(B4)} then,  for all $X \subseteq \HHb$,
	\begin{equation*}
	\begin{split}
	\bbf_\ast(\CCc; \mu^\ast)(X)
	& := 1 - \bbf^\ast(\CCc; \mu^\ast)(X^c) = 1 - \inf_{S \in \CCc: X^c  \subseteq  S} \mu^\ast(S) = 1 - \inf_{S  \in \CCc: S^c  \subseteq  X} \mu^\ast(S) \\
	& \phantom{:}= 1 - \inf_{T  \in \CCc: T  \subseteq  X} \mu^\ast(T^c) =
	\sup_{T  \in \CCc: T  \subseteq  X} (1 - \mu^\ast(T^c)) = \sup_{T  \in \CCc: T  \subseteq  X} \mu_\ast(T).
	\end{split}
	\end{equation*}
	Lastly, we note that the above construction is not vacuous, in the sense that, for some choice of $\mu^\ast$ and $\CCc$, we have $\mu^\ast = \bbf^\ast(\PPc(\HHb); \mu^\ast) \ne \bbf^\ast(\CCc; \mu^\ast) \ne \bbf^\ast(\AAc; \mu^\ast)$.
	Indeed, let $\mathsf{ld}^\ast$ denote the upper logarithmic density on $\HHb$ (see Example \ref{exa:generalized_asymptotic_upper_densities}). By Proposition \ref{prop:modular_criterion} (we omit details), there exists $X \subseteq \NNb^+$ with $\mathsf{ld}^\ast(X) = \dd^\ast(X) = 0$, but $\bbf^\ast(\AAc; \dd^\ast)(X) = 1$. 
	On the other hand, we get, e.g., from \cite[Theorem 3]{LuPo} that there is $Y \subseteq \NNb^+$ such that $\mathsf{ld}^\ast(Y) = 0$ and $\dd^\ast(Y) = 1$.
	So $\dd^\ast \ne \bbf^\ast(\CCc; \dd^\ast) \ne \bbf^\ast(\AAc; \dd^\ast)$, where, consistently with \eqref{equ:Ctheta}, we set $\CCc := \{S \cup T: S \in \AAc \text{ and } \mathsf{ld}^\ast(T) = 0\}$.
\end{example}
Our last example is about another classic of the ``literature on densities'':
\begin{example}
	Denote by $\zeta$ the function ${]1,\infty[} \to \RRb: s \mapsto \sum_{n = 1}^\infty n^{-s}$, namely, the restriction of the Riemann zeta to the interval $]1,\infty[$.
	Then consider the function
	$$
	\mathfrak{a}^\star: \mathcal{P}(\mathbf{H}) \to \RRb: X \mapsto \limsup_{s \to 1^+} \frac{1}{ \zeta(s)} \sum_{i  \in X^+} \frac{1}{i^s}.
	$$
	We claim that $\mathfrak{a}^\ast$ is an upper density, which we refer to as the \textit{upper analytic density} (on $\HHb$) for consistency with \cite[Part III, \S{ }1.3]{Tene}, where the focus is on the case $\HHb = \NNb^+$.
	
	In fact, it is straightforward to check that $\mathfrak{a}^\ast$ satisfies \ref{item:F1}-\ref{item:F4}. As for \ref{item:F5}, fix $X \subseteq \HHb$ and $h \in \NNb$, and pick $\varepsilon \in {]0,1[}$. There exists $n_\varepsilon \in \NNb^+$ such that
	\begin{equation}
	\label{equ:inequality_for_large_arguments}
	0 < (1-\varepsilon)|i+h| \le |i| \le (1+\varepsilon)|i+h|
	\end{equation}
	for $|i|\ge n_\varepsilon$. Set $X_\varepsilon := \{i \in X: i \ge n_\varepsilon\}$. Then $\mathfrak{a}^\ast(S) = 0$, and hence $\mathfrak{a}^\ast(T) = \mathfrak{a}^\ast(S \cup T)$, for all $S,T \subseteq \HHb$ with $|S| < \infty$. Thus $\mathfrak{a}^\star(X)= \mathfrak{a}^\star(X_\varepsilon)$, and by \eqref{equ:inequality_for_large_arguments} we have
	$$
	\limsup_{s \to 1^+} \frac{1}{(1+\varepsilon)^s  \zeta(s)} \sum_{i  \in X_\varepsilon} \frac{1}{(i+h)^s} \le \mathfrak{a}^\star(X) \le \limsup_{s \to 1^+} \frac{1}{(1-\varepsilon)^s  \zeta(s)} \sum_{i  \in X_\varepsilon} \frac{1}{(i+h)^s}.
	$$
	This, in the limit as $\varepsilon \to 0^+$, yields $\mathfrak{a}^\ast(X) = \mathfrak{a}^\ast(X_\varepsilon + h) = \mathfrak{a}^\ast(X + h)$, where we have used again that $\mathfrak{a}^\ast$ is ``invariant under union with finite sets''.
	
	Therefore, $\mathfrak{a}^\ast$ is an upper density, whose lower dual (we omit details) is given by
	$$
	\mathfrak{a}_\ast: \PPc(\HHb) \to \RRb: X \mapsto
	\liminf_{s \to 1^+} \frac{1}{ \zeta(s)} \sum_{i  \in X^+} \frac{1}{i^s}.
	$$
\end{example}
More examples will come later, when we provide some simple criteria to construct new upper densities from old ones
(see, e.g., the discussion at the beginning of \S{ }\ref{sec:others} and Proposition \ref{prop:convex_combinations}).
\section{Range of upper and lower densities}
\label{sec:range}
There are a number of natural questions that may be asked about upper densities. In the light of Remark \ref{rem:upper_quasi-densities_are_non-negative}, one of the most basic of them is probably the following:
\begin{question}
	\label{quest:A}
	Let $\mu^\ast$ be an upper density on $\HHb$. Is it true that $\imag(\mu^\ast) = [0,1]$?
\end{question}
We will actually prove (Theorem \ref{th:image_of_upper_densities}) that the image of every quasi-density is the whole interval $[0,1]$: Therefore, so will be the image of every upper and lower quasi-density. 
This generalizes \cite[Theorem 6]{Bu0} and \cite[Theorem 5]{Pas} (the case of the upper Buck density on $\NNb^+$), and similar results that are known for other classical densities (cf. also Question \ref{open:density_sets} and \cite[\S{ }3]{Mah}). Moreover, it is kind of an analogue for upper quasi-densities of
a theorem of A.~A.~Liapounoff \cite[Theorem 1]{Lia} on the convexity of the range of a non-atomic countably additive vector measure (with values in $\RRb^n$).
\begin{proposition}
	\label{prop:mu_of_finite_sets}
	Let $\mu^\ast$ be a function $\PPc(\HHb) \to \RRb$ that satisfies axioms \ref{item:F1}, \ref{item:F3} and \ref{item:F4b}. If $X$ is a finite subset of $\HHb$, then $\mu^\ast(X) = 0$.
\end{proposition}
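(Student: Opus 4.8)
The plan is to first reduce the statement to the one‑element case and then close it off with finite subadditivity. To set up, note that since $\mu^\ast$ satisfies \ref{item:F4b}, it satisfies both \ref{item:F4} and \ref{item:F5} (these are equivalent, as observed right after \ref{item:F4b}); in particular $\mu^\ast$ is $(-1)$-homogeneous and translational invariant. Also $\mu^\ast$ is nonnegative, because \ref{item:F3} gives $\mu^\ast(X) = \mu^\ast(X \cup X) \le 2\mu^\ast(X)$ for every $X \subseteq \HHb$. Finally $\mu^\ast(\emptyset) = 0$: indeed $k \cdot \emptyset + h = \emptyset$ for all $h,k$, so \ref{item:F4b} with $k = 2$, $h = 1$ forces $\mu^\ast(\emptyset) = \tfrac{1}{2}\mu^\ast(\emptyset)$ (alternatively, invoke Proposition \ref{prop:elementary_properties_of_d-pairs}\ref{item:prop:elementary_properties_of_d-pairs(iiib)}, which applies since \ref{item:F1} makes $(\mu_\ast,\mu^\ast)$ a conjugate pair).

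The core of the argument is to show that $\mu^\ast(\{a\}) = 0$ for every $a \in \HHb$. Observe that $1 \in \HHb$ in each of the admissible cases $\HHb \in \{\ZZb, \NNb, \NNb^+\}$, and that the singleton $\{2\}$ is simultaneously the translate $\{1\} + 1$ and the dilate $2 \cdot \{1\}$ (both contained in $\HHb$). Hence \ref{item:F5} gives $\mu^\ast(\{2\}) = \mu^\ast(\{1\})$ while \ref{item:F4} gives $\mu^\ast(\{2\}) = \tfrac{1}{2}\mu^\ast(\{1\})$, and together these force $\mu^\ast(\{1\}) = 0$. For an arbitrary $a \in \HHb$ I then transport this to $\{a\}$ by translation invariance: if $a \ge 1$ then $\{a\} = \{1\} + (a-1)$ with $a - 1 \in \NNb$, so $\mu^\ast(\{a\}) = \mu^\ast(\{1\}) = 0$; and if $a \le 0$ (which can only occur when $\HHb = \ZZb$, or $\HHb = \NNb$ with $a = 0$) then $\{1\} = \{a\} + (1 - a)$ with $1 - a \in \NNb^+$, so \ref{item:F5} again yields $\mu^\ast(\{a\}) = \mu^\ast(\{1\}) = 0$.

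To conclude, let $X = \{a_1, \dots, a_n\}$ be a finite subset of $\HHb$. If $n = 0$ then $\mu^\ast(X) = \mu^\ast(\emptyset) = 0$; if $n \ge 1$ then Proposition \ref{prop:elementary_properties_of_d-pairs}\ref{item:prop:elementary_properties_of_d-pairs(iib)} (iterated \ref{item:F3}) gives $\mu^\ast(X) \le \sum_{i=1}^{n} \mu^\ast(\{a_i\}) = 0$, and combining with nonnegativity we get $\mu^\ast(X) = 0$. I do not expect a genuine obstacle here: the whole content is the little "base case" observation that $\{2\}$ is at once a translate and a dilate of $\{1\}$ (which pins down $\mu^\ast(\{1\})$), and the only point demanding a touch of care is checking that the dilations and translations invoked actually land inside $\HHb$ for each of the three possible choices of $\HHb$.
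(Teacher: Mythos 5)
Your proposal is correct and follows essentially the same route as the paper: kill the singletons by playing translation invariance against $(-1)$-homogeneity at $\{1\}$ (the paper uses all $k\ge 1$ via $\mu^\ast(\{1\})=\mu^\ast(\{1\}+k-1)=\mu^\ast(k\cdot\{1\})=\frac1k\mu^\ast(\{1\})$, you use just $k=2$), transport to the remaining elements of $\HHb$ by a shift, and finish with finite subadditivity plus nonnegativity. The only cosmetic difference is that the paper disposes of $\{0\}$ via Proposition \ref{prop:elementary_properties_of_d-pairs}\ref{item:prop:elementary_properties_of_d-pairs(iiib)} (since $k\cdot\{0\}=\{0\}$) while you translate it to $\{1\}$; both are fine.
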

\begin{proof}
	Let $X$ be a finite subset of $\HHb$. We have by Proposition \ref{prop:elementary_properties_of_d-pairs}\ref{item:prop:elementary_properties_of_d-pairs(iiib)} that $\mu^\ast(\emptyset) = 0$, and by Remark \ref{remark:sigma-subadditivity} that $\mu^\ast(\{k\}) = 0$ for all $k \in \mathbf H$. So, we obtain from the above and parts \ref{item:prop:elementary_properties_of_d-pairs(iib)} and \ref{item:prop:elementary_properties_of_d-pairs(vib)} of Proposition \ref{prop:elementary_properties_of_d-pairs} that $0 \le \mu^\ast(X) \le  \sum_{x \in X} \mu^\ast(\{x\}) = 0$, which completes the proof.
\end{proof}
Incidentally, observe that, in view of Example \ref{exa:minimum_function}, conditions \ref{item:F1}-\ref{item:F4} alone are not sufficient for Proposition \ref{prop:mu_of_finite_sets} to hold, while the result is independent of \ref{item:F2}.
\begin{proposition}
	\label{prop:unions_of_translates}
	Let $\mu^\ast$ be an upper quasi-density, and for a fixed $k \in \NNb^+$ let $h_1, \ldots, h_n \in \NNb$ be such that $h_i \not\equiv h_j \bmod k$ for $1 \le i < j \le n$ and set $X := {\bigcup}_{i=1}^n (k \cdot \HHb + h_i)$. Then, for every finite $\mathcal{V} \subseteq \HHb$ we have 
	$$
	\mu^\ast(X \cup \mathcal{V}) = \mu^\ast(X \setminus \mathcal{V}) = \mu_\ast(X \cup \mathcal{V}) = \mu_\ast(X \setminus \mathcal{V}) = \frac{n}{k},
	$$
	where $\mu_\ast$ is the lower dual of $\mu^\ast$.
\end{proposition}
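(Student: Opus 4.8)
The plan is to isolate one technical lemma --- that $\mu^\ast(\HHb \setminus \mathcal{W}) = 1$ for every finite $\mathcal{W} \subseteq \HHb$ --- and then obtain everything else from it by soft arguments. Throughout I would use freely that $\mu^\ast$ is nonnegative and subadditive over finite unions (Proposition \ref{prop:elementary_properties_of_d-pairs}\ref{item:prop:elementary_properties_of_d-pairs(iib)},\ref{item:prop:elementary_properties_of_d-pairs(vib)}), that it vanishes on finite subsets of $\HHb$ (Proposition \ref{prop:mu_of_finite_sets}), and that \ref{item:F4b} is equivalent to \ref{item:F4} and \ref{item:F5} together, so in particular $\mu^\ast(k \cdot S) = \tfrac1k \mu^\ast(S)$ and $\mu^\ast(S + h) = \mu^\ast(S)$ for \emph{every} $S \subseteq \HHb$, $h \in \NNb$ and $k \in \NNb^+$. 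I would also set $\mathcal{H}_0 := \HHb \setminus X$, the union of the $k - n$ residue classes modulo $k$ whose representatives avoid $\{h_1, \dots, h_n\}$, so that $X = \mathcal{H}_0^c$ and $X \cup \mathcal{H}_0$ equals $\HHb$ up to a finite set (exactly $\HHb$ unless $\HHb = \NNb^+$).

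Granting the lemma, the four identities follow quickly. On the one hand $\mu^\ast(X) \le \sum_{i=1}^n \mu^\ast(k \cdot \HHb + h_i) = \tfrac nk$ and $\mu^\ast(\mathcal{H}_0) \le \tfrac{k-n}{k}$ by subadditivity and \ref{item:F4}--\ref{item:F5}; on the other hand, since $(X \cup \mathcal{V}) \cup \mathcal{H}_0$ equals $\HHb$ up to a finite set, the lemma gives $\mu^\ast\big((X \cup \mathcal{V}) \cup \mathcal{H}_0\big) = 1$, whence $1 \le \mu^\ast(X \cup \mathcal{V}) + \mu^\ast(\mathcal{H}_0) \le \mu^\ast(X \cup \mathcal{V}) + \tfrac{k-n}{k}$. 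Combining this with $\mu^\ast(X \cup \mathcal{V}) \le \mu^\ast(X) + \mu^\ast(\mathcal{V}) = \mu^\ast(X) \le \tfrac nk$ (Proposition \ref{prop:mu_of_finite_sets}) and the analogous bounds with $\mathcal{V} = \emptyset$ yields $\mu^\ast(X) = \mu^\ast(X \cup \mathcal{V}) = \tfrac nk$, and then $\mu^\ast(X \setminus \mathcal{V}) \ge \mu^\ast(X) = \tfrac nk$ because $X = (X \setminus \mathcal{V}) \cup (X \cap \mathcal{V})$ with $X \cap \mathcal{V}$ finite. For the two $\mu_\ast$-identities I would use $(X \cup \mathcal{V})^c = \mathcal{H}_0 \setminus \mathcal{V}$ and $(X \setminus \mathcal{V})^c = \mathcal{H}_0 \cup \mathcal{V}$, so that $\mu_\ast(X \cup \mathcal{V}) = 1 - \mu^\ast(\mathcal{H}_0 \setminus \mathcal{V})$ and $\mu_\ast(X \setminus \mathcal{V}) = 1 - \mu^\ast(\mathcal{H}_0 \cup \mathcal{V})$; since $\mathcal{H}_0$ is itself a union of $k - n$ residue classes modulo $k$, the $\mu^\ast$-identities just established (now read with $\mathcal{H}_0$ in place of $X$) give both right-hand sides equal to $\tfrac{k-n}{k}$, hence $\mu_\ast(X \cup \mathcal{V}) = \mu_\ast(X \setminus \mathcal{V}) = \tfrac nk$.

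This leaves only the inequality $\mu^\ast(X \setminus \mathcal{V}) \le \tfrac nk$, which I also plan to reduce to the lemma: writing $X \setminus \mathcal{V} = \bigcup_{i=1}^n \big((k \cdot \HHb + h_i) \setminus \mathcal{V}\big)$ and observing that $(k \cdot \HHb + h_i) \setminus \mathcal{V} = k \cdot (\HHb \setminus \mathcal{W}_i) + h_i$ for the finite set $\mathcal{W}_i$ determined by $k \cdot \mathcal{W}_i + h_i = \mathcal{V} \cap (k \cdot \HHb + h_i)$, one gets from \ref{item:F4}--\ref{item:F5} that each piece has $\mu^\ast$-value $\tfrac1k \mu^\ast(\HHb \setminus \mathcal{W}_i) = \tfrac1k$, so by subadditivity $\mu^\ast(X \setminus \mathcal{V}) \le n \cdot \tfrac1k$. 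I expect this to be the crux, and the delicate point is precisely why it cannot be done in one line: $\mu^\ast$ is \emph{not} assumed monotone (nor even bounded), so deleting a finite set could a priori raise its value; the lemma is what forbids this, and its proof is where the real work lies.

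Finally, the lemma. The bound $\mu^\ast(\HHb \setminus \mathcal{W}) \ge 1$ is immediate from $\HHb = (\HHb \setminus \mathcal{W}) \cup \mathcal{W}$ with \ref{item:F1}, subadditivity, and Proposition \ref{prop:mu_of_finite_sets}. For $\le 1$ I would argue in two steps. First the case of a single deleted point $a$: fix a prime $p$ and decompose $\HHb$ (up to a finite set, when $\HHb = \NNb^+$) into its $p$ residue classes modulo $p$; the $p-1$ classes missing $a$ each contribute $\tfrac1p$ (by \ref{item:F4}--\ref{item:F5}), while the class through $a$, after deleting $a$, contributes $\tfrac1p \mu^\ast(\HHb \setminus \{b\})$ for a suitable $b$. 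If $\HHb \in \{\NNb, \NNb^+\}$ one chooses $p > a$, which turns that last set into a genuine arithmetic progression (or throws $a$ into the discarded finite part), of $\mu^\ast$-value $\tfrac1p$, so $\mu^\ast(\HHb \setminus \{a\}) \le p \cdot \tfrac1p = 1$ with no recursion; if $\HHb = \ZZb$, then \ref{item:F5} shows that $c := \mu^\ast(\ZZb \setminus \{a\})$ is independent of $a$, so the decomposition becomes the self-referential inequality $c \le \tfrac{p-1}{p} + \tfrac1p c$, which forces $c \le 1$ (the coefficient $\tfrac1p < 1$ closes the loop, and $c$ is finite because $\mu^\ast$ is $\RRb$-valued). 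Then, for a general finite $\mathcal{W} = \{w_1 < \dots < w_r\}$, I would pick a prime $p > w_r - w_1$ so that the elements of $\mathcal{W}$ lie in pairwise distinct classes modulo $p$; each of the $p$ classes then contributes either a full progression or a progression with one point deleted, so $\mu^\ast$-value $\tfrac1p$ in either case by the one-point step, and summing over the $p$ classes gives $\mu^\ast(\HHb \setminus \mathcal{W}) \le 1$, completing the lemma and hence the proposition. The only recurring nuisances --- the residue class of $0$ and progressions through $0$ (to be handled via \ref{item:F4} rather than \ref{item:F4b}), and the finite tail $\llb 1, p-1 \rrb$ that the mod-$p$ decomposition omits when $\HHb = \NNb^+$ --- are routine bookkeeping.
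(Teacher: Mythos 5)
Your proposal is correct, and its crux is genuinely different from the paper's. For $\mu^\ast(X \cup \mathcal{V})$ you use the same sandwich the paper does (cover $\HHb$ by $X \cup \mathcal{V}$, the complementary residue classes, and a finite set, then let subadditivity, \ref{item:F4b} and Proposition \ref{prop:mu_of_finite_sets} collapse the chain), and your passage to $\mu_\ast$ via the complementary classes is also the paper's. The difference is the upper bound for $\mu^\ast(X \setminus \mathcal{V})$, which is indeed the delicate point in the absence of \ref{item:F2}: the paper avoids your cofinite-sets lemma altogether by writing $X \setminus \mathcal{V} = (X + h) \cup \mathcal{W}$ for a suitable shift $h$ and a finite $\mathcal{W}$, and then invoking \ref{item:F5} together with the already-proved case of unions with finite sets; you instead prove that $\mu^\ast(\HHb \setminus \mathcal{W}) = 1$ for every finite $\mathcal{W}$ (via a mod-$p$ decomposition, with the self-improving inequality $c \le \frac{p-1}{p} + \frac{c}{p}$ when $\HHb = \ZZb$) and then treat each $(k \cdot \HHb + h_i) \setminus \mathcal{V}$ through \ref{item:F4b}. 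The paper's route is shorter, but its decomposition $X \setminus \mathcal{V} = (X+h) \cup \mathcal{W}$ really exists only when $\HHb$ is bounded below: for $\HHb = \ZZb$ any shift of $X$ contained in $X$ equals $X$, so no shift can avoid a finite set meeting $X$, and your lemma-based argument is precisely what covers that case uniformly; moreover the lemma (cofinite sets have full upper quasi-density) is a useful substitute for the missing monotonicity in its own right. Two small points to tidy when writing it up: when $\HHb \in \{\NNb, \NNb^+\}$ (or when some $h_i \ge k$), the set $\HHb \setminus X$ is a union of $k-n$ residue classes only up to a finite set, so in the final $\mu_\ast$ step "reading the identities with $\mathcal{H}_0$ in place of $X$" requires the $\mu^\ast$-identities for sets differing from a union of classes by a finite \emph{symmetric difference} (upper bound classwise via your lemma, lower bound by covering $\HHb$ with the set, the complementary classes and a finite set) — same tools, just one more line; and note that the full strength of the lemma is needed only for the $X \setminus \mathcal{V}$ upper bound, the lower bounds needing nothing beyond \ref{item:F1}, \ref{item:F3} and Proposition \ref{prop:mu_of_finite_sets}.
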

\begin{proof}
	Let $l_i$ be, for each $i=1, \ldots, n$, the remainder of the integer division of $h_i$ by $k$ (in such a way that $0 \le l_i < k$), and set 
	$$
	Y := {\bigcup}_{l \in \mathcal{H}} (k \cdot \HHb + l), 
	\quad\text{where }\mathcal{H} := \llb 0, k-1 \rrb \setminus \{l_1, \ldots, l_n\}.
	$$
	Clearly, $\HHb = X \cup Y \cup S$ for some finite $S \subseteq \HHb$. Therefore,
	we have from axioms \ref{item:F1}, \ref{item:F3}, and \ref{item:F4b} and Propositions \ref{prop:mu_of_finite_sets} and \ref{prop:elementary_properties_of_d-pairs}\ref{item:prop:elementary_properties_of_d-pairs(iib)} that, however we choose a finite $\mathcal{V} \subseteq \HHb$,
	\begin{equation*}
	\begin{split}
	1 = \mu^\ast(\HHb)
	& \le \mu^\ast(X \cup \mathcal{V}) + \mu^\ast(Y \cup S) \le \mu^\ast(X) + \mu^\ast(Y) + \mu^\ast(S) + \mu^\ast(\mathcal{V}) \\
	& = \mu^\ast(X) + \mu^\ast(Y) \le n \mu^\ast(k \cdot \HHb) + (k-n) \mu^\ast(k \cdot \HHb) = k \mu^\ast(k \cdot \HHb) = 1,
	\end{split}
	\end{equation*}
	which is possible only if $\mu^\ast(X \cup \mathcal{V}) = \mu^\ast(X) = n\mu^\ast(k \cdot \HHb) = \frac{n}{k}$.
	
	On the other hand, if $\mathcal{V} \subseteq \HHb$ is finite, then for each $i \in \llb 1, n \rrb$ there exists a set $S_i \subseteq \mathbf H$ such that $(k \cdot \mathbf H + h_i) \setminus \mathcal V = k \cdot S_i + h_i$. Thus, we conclude from the above, Proposition \ref{prop:mu_of_finite_sets}, and the properties of $\mu^\ast$ (we skip some details) that
	\begin{equation*}
	\begin{split}
	\frac{n}{k} & = \mu^\ast(X) \le \mu^\ast(\mathcal V) + \sum_{i=1}^n \mu^\ast((k \cdot \HHb + h_i) \setminus \mathcal V) \\
	& =
	\sum_{i=1}^n \mu^\ast(k \cdot S_i + h_i) = \frac{1}{k} \sum_{i=1}^n \mu^\ast(S_i) \le \frac{n}{k},
	\end{split}
	\end{equation*}
	which shows that $\mu^\ast(X \setminus \mathcal V) = \mu^\ast(X) = \frac{n}{k}$.
	
	As for $\mu_\ast$, it is straightforward that if $\mathcal V$ is a finite subset of $\HHb$ then $|Y \triangle (X \cup \mathcal V)^c| < \infty$ and $|Y \triangle (X \setminus \mathcal V)^c| < \infty$,
	which, together with the first part, yields 
	$$
	\mu^\ast((X \cup \mathcal V)^c) = \mu^\ast((X \setminus \mathcal V)^c) = \mu^\ast(Y) = 1 - \frac{n}{k}, 
	$$
	and hence $\mu_\ast(X \cup \mathcal V) = \mu_\ast(X \setminus \mathcal V) = \frac{n}{k}$.
\end{proof}
Proposition \ref{prop:unions_of_translates} can be regarded, in the light of Proposition \ref{prop:mu_of_finite_sets}, as a supplement to Proposition \ref{prop:null_invariance}\ref{prop:null_invariance(ii)}, and it is already enough to imply the following corollary (we omit further details), which falls short of an answer to Question \ref{quest:A}, but will be used later in the proof of Theorem \ref{th:image_of_upper_densities}.
\begin{corollary}
	\label{cor:density_of_image}
	Let $\mu$ be an upper quasi-density on $\HHb$. Then $\mathbf Q \cap [0,1] \subseteq \imag(\mu)$.
\end{corollary}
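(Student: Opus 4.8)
The plan is to realize every rational $q \in [0,1]$ as the value $\mu^\ast(X)$ for a suitable set $X$ built out of arithmetic progressions, and then invoke Proposition~\ref{prop:unions_of_translates} (whose hypotheses are met, since an upper quasi-density satisfies \ref{item:F1}, \ref{item:F3} and \ref{item:F4b}, i.e. it is of the kind considered in Proposition~\ref{prop:mu_of_finite_sets}) to compute that value. Write $q = n/k$ with $k \in \NNb^+$ and $0 \le n \le k$; every element of $\mathbf{Q} \cap [0,1]$ admits such a representation. First I would dispose of the two degenerate cases: if $n = 0$ then $q = 0 = \mu^\ast(\emptyset)$ by Proposition~\ref{prop:mu_of_finite_sets}, and if $n = k$ then $q = 1 = \mu^\ast(\HHb)$ by \ref{item:F1}.

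For the remaining case $1 \le n \le k-1$, I would take the offsets $h_i := i-1$ for $i = 1, \ldots, n$. Since $0, 1, \ldots, n-1$ are $n$ integers lying in $\llb 0, k-1 \rrb$, they are pairwise incongruent modulo $k$, so Proposition~\ref{prop:unions_of_translates} applies to $X := \bigcup_{i=1}^{n} (k \cdot \HHb + h_i)$ and yields $\mu^\ast(X) = n/k = q$; in fact it gives $\mu^\ast(X) = \mu_\ast(X) = n/k$, so $X$ lies in the domain of the induced quasi-density, which means the conclusion holds verbatim whether $\imag(\mu)$ is read as the range of the upper quasi-density or of the induced one. As $q$ ranges over $\mathbf{Q} \cap [0,1]$, this shows $\mathbf{Q} \cap [0,1] \subseteq \imag(\mu)$.

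I do not expect any real obstacle: the corollary is essentially a packaging of Proposition~\ref{prop:unions_of_translates}, which already performs the only nontrivial computation (the density of a finite union of arithmetic progressions, plus the invariance of that value under finite perturbations). The only points needing a line of care are the handling of the endpoints $q = 0$ and $q = 1$ and the observation that $h_1, \ldots, h_n$ are distinct modulo $k$, both of which are immediate. The genuinely substantial step—upgrading $\mathbf{Q} \cap [0,1]$ to all of $[0,1]$, which requires a limiting/diagonal construction rather than a single explicit set—is precisely what is postponed to Theorem~\ref{th:image_of_upper_densities}.
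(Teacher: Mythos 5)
Your proposal is correct and matches the paper's intended argument: the paper derives Corollary~\ref{cor:density_of_image} directly from Proposition~\ref{prop:unions_of_translates} (omitting details), and realizing $n/k$ as $\mu^\ast\bigl(\bigcup_{i=1}^{n}(k\cdot\HHb+i-1)\bigr)$, with the endpoints $0$ and $1$ handled via Proposition~\ref{prop:mu_of_finite_sets} and \ref{item:F1}, is exactly that derivation. Your added remark that these sets lie in $\dom(\mu)$, so the statement holds for the induced quasi-density as well, is a correct and welcome clarification.
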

The next result is essentially an extension of Proposition \ref{prop:unions_of_translates}.

\begin{proposition}
	\label{prop:union_of_disjoint_classes_different_modules}
	Let $\mu^\ast$ be an upper quasi-density, and assume that $(k_i)_{i \ge 1}$ and $(h_i)_{i \ge 1}$ are integer sequences with the property that:
	\begin{enumerate}[label={\rm (\roman{*})}]
		\item\label{item:prop:union_of_disjoint_classes_different_modules(i)} $k_i \ge 1$ and $k_i \mid k_{i+1}$ for each $i \in \NNb^+$;
		\item\label{item:prop:union_of_disjoint_classes_different_modules(ii)} Given $i,j \in \NNb^+$ with $i < j$, there exists no $x \in \ZZb$ such that $k_i x + h_i \equiv h_j \bmod k_j$.
	\end{enumerate}
	Then $\mu^\ast(X_n) = \sum_{i=1}^n \frac{1}{k_i}$, where $X_n := \bigcup_{i=1}^n (k_i \cdot \HHb + h_i)$.
\end{proposition}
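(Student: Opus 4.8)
The plan is to establish the two bounds $\mu^\ast(X_n) \le \sum_{i=1}^n \frac{1}{k_i}$ and $\mu^\ast(X_n) \ge \sum_{i=1}^n \frac{1}{k_i}$ separately, the former being immediate and the latter relying on the same ``complementation trick'' already used in the proof of Proposition \ref{prop:unions_of_translates}. Indeed, $X_n$ is a union of $n$ arithmetic progressions of $\HHb$, so Proposition \ref{prop:elementary_properties_of_d-pairs}\ref{item:prop:elementary_properties_of_d-pairs(iib)} together with axiom \ref{item:F4b} yields $\mu^\ast(X_n) \le \sum_{i=1}^n \mu^\ast(k_i \cdot \HHb + h_i) = \sum_{i=1}^n \frac{1}{k_i}$, where translational invariance handles, exactly as in Proposition \ref{prop:unions_of_translates}, those indices $i$ for which $h_i$ is not a positive integer.

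For the reverse inequality, set $m := k_n$, which by \ref{item:prop:union_of_disjoint_classes_different_modules(i)} is a common multiple of $k_1, \dots, k_n$. Since $k_i \mid m$, the progression $k_i \cdot \HHb + h_i$ agrees, up to a finite set, with the union of the $m/k_i$ residue classes modulo $m$ that are congruent to $h_i$ modulo $k_i$. The role of hypothesis \ref{item:prop:union_of_disjoint_classes_different_modules(ii)} is precisely to ensure that no residue class modulo $m$ is simultaneously congruent to $h_i$ modulo $k_i$ and to $h_j$ modulo $k_j$ when $i \ne j$: if $r$ were such a residue (say with $i < j$ after relabelling), then $x := (r - h_i)/k_i$ would be an integer with $k_i x + h_i = r \equiv h_j \bmod k_j$, contradicting \ref{item:prop:union_of_disjoint_classes_different_modules(ii)}. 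Hence the residue classes contributed by distinct progressions are pairwise distinct, so that $X_n$ equals, up to a finite set, a disjoint union of $N := \sum_{i=1}^n m/k_i$ residue classes modulo $m$; in particular $N \le m$, which already shows $\sum_{i=1}^n \frac{1}{k_i} \le 1$.

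Now let $Y$ be the union of the remaining $m - N$ residue classes modulo $m$, so that $\HHb = X_n \cup Y \cup S$ for a suitable finite $S \subseteq \HHb$. Using axiom \ref{item:F1}, finite subadditivity, Proposition \ref{prop:mu_of_finite_sets} (to the effect that $\mu^\ast(S) = 0$), and axiom \ref{item:F4b} together with translational invariance (so that $\mu^\ast(m \cdot \HHb + r) = \frac{1}{m}$ for every residue $r$), one obtains
$$
1 = \mu^\ast(\HHb) \le \mu^\ast(X_n) + \mu^\ast(Y) + \mu^\ast(S) \le \mu^\ast(X_n) + \frac{m - N}{m},
$$
i.e. $\mu^\ast(X_n) \ge N/m = \sum_{i=1}^n \frac{1}{k_i}$, and combining this with the upper bound proves the proposition. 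The only genuinely delicate point is the bookkeeping in the middle paragraph, namely recognizing that \ref{item:prop:union_of_disjoint_classes_different_modules(ii)} is exactly what forces the residue classes of the various progressions to be disjoint, and counting them correctly; everything else is a routine reprise of the argument in Proposition \ref{prop:unions_of_translates}. (One could equivalently phrase the whole thing as an induction on $n$, each step adjoining one more progression and one more application of subadditivity, but the direct version above seems shorter.)
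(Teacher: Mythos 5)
Your proposal is correct and takes essentially the same route as the paper: the key step---refining each progression $k_i \cdot \HHb + h_i$ into residue classes modulo $k_n$ and using hypothesis \ref{item:prop:union_of_disjoint_classes_different_modules(ii)} to see that classes coming from distinct progressions are pairwise incongruent---is exactly the paper's argument, which at that point simply invokes Proposition \ref{prop:unions_of_translates} to get the value $\sum_{i=1}^n \frac{1}{k_i}$, whereas you re-derive that proposition's complementation trick inline (and get the upper bound separately by direct subadditivity). There is no gap; the two write-ups differ only in bookkeeping.
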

\begin{proof}
	Fix $n \in \NNb^+$. By condition \ref{item:prop:union_of_disjoint_classes_different_modules(i)}, we have, for each $i \in \llb 1, n \rrb$,
	$$k_i \cdot \HHb + h_i = \bigcup_{l = 0}^{k_i^{-1}k_n-1} (k_n \cdot \HHb  + k_i l + h_i).$$
	Therefore, we find that
	\begin{equation}
	\label{equ:decomposition_of_set_into_union_of_disjoint_residue_classes_mod_kn}
	X_n = \bigcup_{i=1}^n \bigcup_{l = 0}^{k_i^{-1}k_n-1} (k_n \cdot \HHb  + k_i l + h_i).
	\end{equation}
	On the other hand, if $1 \le i < j \le n$ then $k_i l_i + h_i \not\equiv k_j l_j + h_j \bmod k_n$ for all $l_i \in \llb 0, k_i^{-1} k_n - 1 \rrb$ and $l_j \in \llb 0, k_j^{-1} k_n - 1 \rrb$, otherwise we would have that $l_i$ is an integer solution to the congruence $k_i x + h_i \equiv h_j \bmod k_j$ (by the fact that $k_j \mid k_n$), in contradiction to condition \ref{item:prop:union_of_disjoint_classes_different_modules(ii)}.
	Thus, it follows by \eqref{equ:decomposition_of_set_into_union_of_disjoint_residue_classes_mod_kn} and Proposition \ref{prop:unions_of_translates} that 
	\begin{equation*}
	\mu^\ast(X_n) = \frac{1}{k_n} \sum_{i=1}^n k_i^{-1} k_n = \sum_{i=1}^n \frac{1}{k_i},
	\end{equation*}
	which concludes the proof.
\end{proof}
While \ref{item:F2} is, by Theorem \ref{th:independence_of_(F2)}, independent of \ref{item:F1}, \ref{item:F3} and \ref{item:F4b}, the latter conditions are almost sufficient to prove a weak form of \ref{item:F2}, as shown in the next two statements,
where $\AAc^\sharp$ denotes the set of all subsets of $\HHb$ that are finite unions
of arithmetic progressions of $\HHb$, or differ from these by a finite number of integers (in particular, $\emptyset \in \AAc^\sharp$), cf. Example \ref{exa:buck's_measure_density}.
\begin{proposition}
	\label{prop:relaxed_(F2)_with_upper_arithmetic_bound}
	Let $\mu^\ast$ be an upper quasi-density on $\HHb$, and pick $X \in \PPc(\HHb)$ and $Y \in \mathscr{A}^\sharp$ such that $X \subseteq Y$. Then $\mu^\ast(X) \le \mu^\ast(Y)$.
\end{proposition}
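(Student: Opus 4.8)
The plan is to reduce everything to Propositions \ref{prop:mu_of_finite_sets} and \ref{prop:unions_of_translates}, taking care never to invoke monotonicity (which is precisely what we are trying to recover in a weak form). First I would dispose of the degenerate case: if $Y$ is finite, then so is $X \subseteq Y$, whence $\mu^\ast(X) = \mu^\ast(Y) = 0$ by Proposition \ref{prop:mu_of_finite_sets}. So assume $Y$ is infinite. By definition of $\AAc^\sharp$, there are finitely many arithmetic progressions $k_i \cdot \HHb + a_i$ ($1 \le i \le m$, with $k_i \ge 1$; if $\HHb = \ZZb$ we may replace $k_i$ by $|k_i|$) such that $Y$ differs from their union by only finitely many integers. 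Set $k := \mathrm{lcm}(k_1, \dots, k_m)$; splitting each $k_i \cdot \HHb + a_i$ into residue classes modulo $k$ and discarding repetitions, one checks (this is the one place that needs a little care) that there exist pairwise incongruent $h_1, \dots, h_n \in \llb 1, k \rrb$ such that, with $Z := \bigcup_{j=1}^n (k \cdot \HHb + h_j)$, we have $|Y \triangle Z| < \infty$. By Proposition \ref{prop:unions_of_translates}, $\mu^\ast(Z \cup \mathcal{V}) = \mu^\ast(Z \setminus \mathcal{V}) = \frac{n}{k}$ for every finite $\mathcal{V} \subseteq \HHb$.

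Next I would pin down $\mu^\ast(Y)$. Put $\mathcal{G} := Y \triangle Z$, a finite set. Since $Y \cap Z = Z \setminus (Z \setminus Y)$ with $Z \setminus Y \subseteq \mathcal{G}$ finite, Proposition \ref{prop:unions_of_translates} gives $\mu^\ast(Y \cap Z) = \frac{n}{k}$. Now apply \ref{item:F3} to $Y = (Y \cap Z) \cup (Y \setminus Z)$: as $Y \setminus Z \subseteq \mathcal{G}$ is finite, Proposition \ref{prop:mu_of_finite_sets} yields $\mu^\ast(Y \setminus Z) = 0$, so $\mu^\ast(Y) \le \mu^\ast(Y \cap Z) = \frac{n}{k}$; for the reverse inequality, note $Y \cup Z = Y \cup (Z \setminus Y)$, whence $\frac{n}{k} = \mu^\ast(Z \cup (Y \setminus Z)) = \mu^\ast(Y \cup Z) \le \mu^\ast(Y) + \mu^\ast(Z \setminus Y) = \mu^\ast(Y)$. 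Thus $\mu^\ast(Y) = \frac{n}{k}$.

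It remains to bound $\mu^\ast(X)$ from above by $\frac{n}{k}$. Since $X \subseteq Y$ and $Y \setminus Z \subseteq \mathcal{G}$, the set $X \setminus Z$ is finite, so $\mu^\ast(X \setminus Z) = 0$. For each $j$, the map $x \mapsto kx + h_j$ is a bijection $\HHb \to k \cdot \HHb + h_j$, hence $X \cap (k \cdot \HHb + h_j) = k \cdot X_j + h_j$ where $X_j := \{x \in \HHb : kx + h_j \in X\} \subseteq \HHb$; then \ref{item:F4b} together with the bound $\mu^\ast(X_j) \le 1$ (valid because $\mu^\ast$ is an upper quasi-density) gives $\mu^\ast(k \cdot X_j + h_j) = \frac{1}{k}\mu^\ast(X_j) \le \frac{1}{k}$. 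Since $X \cap Z = \bigcup_{j=1}^n (k \cdot X_j + h_j)$, finite subadditivity (Proposition \ref{prop:elementary_properties_of_d-pairs}\ref{item:prop:elementary_properties_of_d-pairs(iib)}) gives $\mu^\ast(X \cap Z) \le \frac{n}{k}$, and one final use of \ref{item:F3} yields $\mu^\ast(X) \le \mu^\ast(X \cap Z) + \mu^\ast(X \setminus Z) \le \frac{n}{k} = \mu^\ast(Y)$, as desired.

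The main obstacle is not really conceptual but organizational: rewriting an arbitrary member of $\AAc^\sharp$ as a finite perturbation of a union of residue classes modulo a single modulus $k$, with representatives chosen in $\llb 1, k \rrb$ so that \ref{item:F4b} applies verbatim, and consistently routing every set inclusion through subadditivity plus vanishing on finite sets rather than through the (unavailable) monotonicity. Once that discipline is respected, the argument is a direct assembly of Propositions \ref{prop:mu_of_finite_sets}, \ref{prop:unions_of_translates}, and \ref{prop:elementary_properties_of_d-pairs}\ref{item:prop:elementary_properties_of_d-pairs(iib)}.
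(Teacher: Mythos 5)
Your proof is correct and follows essentially the same route as the paper's: reduce $Y$ modulo a finite set to a union of $n$ pairwise incongruent residue classes mod $k$, get $\mu^\ast(Y)=\frac{n}{k}$ from Proposition \ref{prop:unions_of_translates}, split $X$ (up to a finite set) into its traces on those classes, and bound each trace by $\frac{1}{k}$ via \ref{item:F4b} and $\imag(\mu^\ast)\subseteq[0,1]$, summing with subadditivity. The only differences are cosmetic (residues taken in $\llb 1,k\rrb$ rather than $\llb 0,k-1\rrb$, and a more explicit verification that a two-sided finite perturbation of $Z$ still has density $\frac{n}{k}$, which the paper leaves to its citation of Proposition \ref{prop:unions_of_translates}).
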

\begin{proof}
	Since $Y \in \mathscr{A}^\sharp$, there exist $k \in \NNb^+$ and $\HHc \subseteq \llb 0, k-1 \rrb$ such that the symmetric difference of $Y$ and $\bigcup_{h \in \HHc} (k \cdot \HHb + h)$ is finite.
	Because $X \subseteq Y$, it follows that the relative complement of $\bigcup_{h \in \HHc} X_h$ in $X$, where $X_h := X \cap (k \cdot \HHb + h) \subseteq X$, is finite too. Therefore, we get by Propositions \ref{prop:mu_of_finite_sets} and \ref{prop:elementary_properties_of_d-pairs}\ref{item:prop:elementary_properties_of_d-pairs(iib)} that $\mu^\ast(X) \le \sum_{h \in \HHc}\mu^\ast(X_h)$, and by Proposition \ref{prop:unions_of_translates} that $\mu^\ast(Y) = \frac{1}{k}|\HHc|$.
	
	On the other hand, we have that, however we choose $h \in \HHc$, there is a set $S_h \subseteq \HHb$ for which $X_h = k \cdot S_h + h$. Hence, we infer from the above, \ref{item:F4b}, and Remark \ref{rem:upper_quasi-densities_are_non-negative} that
	\begin{equation*}
	\mu^\ast(X) \le \sum_{h \in \HHc} \mu^\ast(X_h) = \sum_{h \in \HHc} \mu^\ast(k \cdot S_h + h) = \frac{1}{k} \sum_{h \in \HHc} \mu^\ast(S_h) \le \mu^\ast(Y),
	\end{equation*}
	which finishes the proof.
\end{proof}
\begin{corollary}
	\label{cor:inequality_for_density}
	Let $X \subseteq \HHb$ and $Y, Z \in \mathscr{A}^\sharp$ such that $Y \subseteq X \subseteq Z$, and assume $\mu^\ast$ is an upper quasi-density on $\HHb$. Then $Y, Z \in \dom(\mu)$ and $\mu(Y) \le \mu_\ast(X) \le \mu^\ast(X) \le \mu(Z)$, where $\mu$ and $\mu_\ast$ are, respectively, the quasi-density induced by and the lower dual of $\mu^\ast$.
\end{corollary}
\begin{proof}
	First, $Y \subseteq X$ implies $X^c \subseteq Y^c$, and it is clear that $Y^c \in \AAc^\sharp$. So we get from Propositions \ref{prop:elementary_properties_of_d-pairs}\ref{item:prop:elementary_properties_of_d-pairs(vib)} and \ref{prop:relaxed_(F2)_with_upper_arithmetic_bound} that $\mu_\ast(X) \le \mu^\ast(X) \le \mu^\ast(Z)$ and $\mu^\ast(X^c) \le \mu^\ast(Y^c)$, and the latter inequality gives $\mu_\ast(Y) \le \mu_\ast(X)$. This is enough to complete the proof, since we know from Proposition \ref{prop:unions_of_translates} that $Y, Z \in \dom(\mu)$, and therefore $\mu_\ast(Y) = \mu(Y)$ and $\mu^\ast(Z) = \mu(Z)$.
\end{proof}
Finally, we are ready to answer Question \ref{quest:A}.
\begin{theorem}
	\label{th:image_of_upper_densities}
	Let $\mu$ be the quasi-density induced by an upper quasi-density $\mu^\ast$ on $\HHb$. Then the range of $\mu$ is $[0,1]$. In particular, $\imag(\mu^\ast) = \imag(\mu_\ast) = [0,1]$, where $\mu_\ast$ is the lower dual of $\mu^\ast$.
\end{theorem}
\begin{proof}
	Remark \ref{rem:upper_quasi-densities_are_non-negative} and Corollary \ref{cor:density_of_image} yield 
	$\mathbf Q \cap [0,1] \subseteq \imag(\mu) \subseteq [0,1]$.
	So, fix an irrational number $\alpha \in [0,1]$. Then, there is uniquely determined an increasing sequence $(a_i)_{i \ge 1}$ of positive integers with $\alpha = \sum_{i = 1}^\infty 2^{-a_i}$. Accordingly, let $X_i$ denote, for each $i \in \NNb^+$, the set $
	X_i := 2^{a_i}\cdot\HHb + r_i$,
	where $r_i := \sum_{j=1}^{i-1} 2^{a_j-1}$. Lastly, define $X := \bigcup_{i = 1}^\infty X_i$.
	
	Given $n \in \NNb^+$, we note that $X_i \subseteq 2^{a_n-1} \cdot \HHb + r_n$ for every $i \ge n$, because $x \in X_i$ if and only there exists $y \in \HHb$ for which $x = 2^{a_i}y + \sum_{j=1}^{i-1} 2^{a_j - 1}$, so that
	$x = 2^{a_n-1} z + r_n$  for some $z \in \HHb$. Taking $Y_n := \bigcup_{i=1}^n X_i$, we obtain
	$$
	Y_n \subseteq X \subseteq Y_n \cup (2^{a_n-1} \cdot \HHb + r_n),
	$$
	which, in turn, implies by Corollary \ref{cor:inequality_for_density} and
	axiom \ref{item:F3} that
	\begin{equation}
	\label{equ:squeeze}
	\begin{split}
	\mu(Y_n) & \le \mu_\ast(X) \le \mu^\ast(X) \le \mu(Y_n \cup (2^{a_n-1} \cdot \HHb + r_n)) \\
	& \le \mu(Y_n) + \mu(2^{a_n-1} \cdot \HHb + r_n).
	\end{split}
	\end{equation}
	On the other hand, it is seen that, however we choose $i,j \in \NNb^+$ with $i < j$, there exists no $x \in \ZZb$ such that $2^{a_i} x + r_i \equiv r_j \bmod 2^{a_j}$: Otherwise, 
	$
	2^{a_i} x \equiv \sum_{l = i}^{j-1} 2^{a_l-1} \bmod 2^{a_j},
	$
	that is, $2x \equiv \sum_{l = i}^{j-1} 2^{a_l-a_i} \bmod 2^{a_j - a_i + 1}$, which is impossible, because $\sum_{l = i}^{j-1} 2^{a_l-a_i}$ is an odd integer and $a_j - a_i + 1 > 0$.
	It follows from Proposition \ref{prop:union_of_disjoint_classes_different_modules}, equation
	\eqref{equ:squeeze}, and axiom \ref{item:F4b} that
	$$
	\sum_{i=1}^n \frac{1}{2^{a_i}} = \mu(Y_n) \le \mu_\ast(X) \le \mu^\ast(X) \le \frac{1}{2^{a_n-1}} + \sum_{i=1}^n \frac{1}{2^{a_i}}.
	$$
	So, passing to the limit as $n \to \infty$, we get that $\mu_\ast(X) = \mu^\ast(X) = \alpha$. Thus, $X \in \dom(\mu)$ and $\mu(X) = \alpha$, which completes the proof, since $\alpha$ was arbitrary.
\end{proof}
Incidentally, it has been recently proved in \cite[Theorem 1]{LT17} that upper and lower quasi-densities have a kind of intermediate value property, which is actually much stronger than the ``In particular'' part of Theorem \ref{th:image_of_upper_densities} (cf. also Question \ref{open:Darboux_for_densities} below).
\section{Structural results}
\label{sec:others}
Let $\GGc$ be a subset of $\FFc := \{\ref{item:F1}, \ldots, \ref{item:F5}\}$, where axioms $\ref{item:F1}, \ldots, \ref{item:F5}$ are viewed as words of a suitable formal language; in particular, we write $\FFc_1$ for $\FFc \setminus \{\ref{item:F1}\}$, $\FFc_2$ for $\FFc \setminus \{\ref{item:F2}\}$, and so forth.
We denote by $\MMc^\ast(\GGc)$ the set of all functions $\mu^\ast: \PPc(\HHb) \to \mathbf{R}$ that satisfy the axioms in $\GGc$ and the condition $\imag(\mu^\ast) \subseteq [0,1]$, and by $\MMc_\ast(\GGc)$ the set of the duals of the functions in $\MMc^\ast(\GGc)$.
In particular, $\MMc^\ast(\FFc)$ and $\MMc^\ast(\FFc_2)$ are, respectively, the set of all upper densities and the set of all upper quasi-densities on $\HHb$. We may ask the following (vague) question:
\begin{question}
	Is there anything interesting about the ``structure'' of $\MMc^\ast(\GGc)$ and $\MMc_\ast(\GGc)$?
\end{question}
For a partial answer, we regard $\MMc^\ast(\GGc)$ and $\MMc_\ast(\GGc)$
as subsets of $\mathcal{B}(\PPc(\HHb),\RRb)$, the real vector space of all bounded functions $f: \PPc(\HHb) \to \RRb$,
endowed with the (partial) order ${\preceq}$ given by $f \preceq g$ if and only if $f(X) \le g(X)$ for all $X\subseteq \HHb$ (as usual, we will write $f \prec g$ if $f \preceq g$ and $f \ne g$).

Given $q \in \RRb^+$, we say that a subset $\mathcal{F}$ of non-negative and uniformly bounded functions of $\mathcal{B}(\PPc(\HHb),\RRb)$ is \textit{countably $q$-convex} if, however we choose a $[0,1]$-valued sequence $(\alpha_n)_{n \ge 1}$ such that $\sum_{n = 1}^\infty \alpha_n = 1$ and an $\mathcal{F}$-valued sequence $(f_n)_{n \ge 1}$, the function
$$
f: \PPc(\HHb) \to \RRb: X \mapsto {\left(\sum_{n = 1}^\infty \alpha_n (f_n(X))^{q}\right)}^{1/q}
$$
is still in $\mathcal F$ (notice that $f$ is well defined, thanks to the assumption that there exists $K \in \mathbf R^+$ such that $f_n(X) \le K$ for every $X \in \mathcal P(\mathbf H)$ and $n \in \mathbf N^+$).
In particular, ``$\mathcal{F}$ being sigma-convex'' is the same as ``$\mathcal{F}$ being countably $1$-convex'' (which in turn implies that $\mathcal F$ is convex).
\begin{proposition}
	\label{prop:convex_combinations}
	Let $q \in [1,\infty[$. Then $\MMc^\ast(\GGc)$ is a countably $q$-convex set.
\end{proposition}
\begin{proof}
	Let $\mu^\ast := \sum_{n=1}^\infty \alpha_n \mu_n^{q}$, where $(\alpha_n)_{n \ge 1}$ is a sequence of non-negative real numbers such that $\sum_{n = 1}^\infty \alpha_n = 1$, and $(\mu_n)_{n \ge 1}$ is an $\MMc^\ast(\GGc)$-valued sequence. It is straightforward that $\mu^\ast \in \MMc^\ast(\GGc)$: In particular, it is an easy consequence of Minkowski's inequality for sums that, if every $\mu_n$ is subadditive, then so is $\mu^\ast$.
\end{proof}
The above proposition implies (we omit details) that $\MMc^\ast(\FFc)$ has at least the same cardinality of $\RRb$ (cf. Example \ref{exa:generalized_asymptotic_upper_densities}), 
and it leads to the following corollary, whose simple proof we leave as an exercise for the reader (later on, we will often do the same with no further comment).
\begin{corollary}
	\label{cor:convexity}
	Both $\MMc^\ast(\GGc)$ and $\MMc_\ast(\GGc)$ are sigma-convex sets.
\end{corollary}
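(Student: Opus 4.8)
The plan is to read off both assertions from Proposition \ref{prop:convex_combinations} specialized to the exponent $q=1$: by the definitions recalled just before that proposition, a family of nonnegative, uniformly bounded functions of $\mathcal{B}(\PPc(\HHb),\RRb)$ is sigma-convex precisely when it is countably $1$-convex. Hence for $\MMc^\ast(\GGc)$ there is nothing further to do — Proposition \ref{prop:convex_combinations} applied with $q=1$ says exactly that $\MMc^\ast(\GGc)$ is countably $1$-convex, i.e. sigma-convex.

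For $\MMc_\ast(\GGc)$ I would transport the previous fact through the duality $\mu^\ast \mapsto \mu_\ast$. First I would note that $\MMc_\ast(\GGc)$ consists of nonnegative, uniformly bounded functions: if $\nu_\ast \in \MMc_\ast(\GGc)$ then $\nu_\ast = 1 - \nu^\ast(\cdot^c)$ for some $\nu^\ast \in \MMc^\ast(\GGc)$, and $\imag(\nu^\ast)\subseteq[0,1]$ forces $\imag(\nu_\ast)\subseteq[0,1]$. Then, given a $[0,1]$-valued sequence $(\alpha_n)_{n\ge1}$ with $\sum_{n=1}^\infty \alpha_n = 1$ and a sequence $(\mu_{n,\ast})_{n\ge1}$ in $\MMc_\ast(\GGc)$, I would pick for each $n$ (using the very definition of $\MMc_\ast(\GGc)$) a function $\mu_n^\ast \in \MMc^\ast(\GGc)$ whose dual is $\mu_{n,\ast}$, set $\mu^\ast := \sum_{n=1}^\infty \alpha_n \mu_n^\ast$ — which lies in $\MMc^\ast(\GGc)$ by the first part — and compute its dual: for every $X\subseteq\HHb$,
$$
1 - \mu^\ast(X^c) = 1 - \sum_{n=1}^\infty \alpha_n \mu_n^\ast(X^c) = \sum_{n=1}^\infty \alpha_n\bigl(1 - \mu_n^\ast(X^c)\bigr) = \sum_{n=1}^\infty \alpha_n \mu_{n,\ast}(X),
$$
where the middle equality uses $\sum_{n=1}^\infty \alpha_n = 1$. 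Thus $\sum_{n=1}^\infty \alpha_n \mu_{n,\ast}$ is the dual of an element of $\MMc^\ast(\GGc)$, hence belongs to $\MMc_\ast(\GGc)$, which is exactly what countable $1$-convexity (i.e. sigma-convexity) of $\MMc_\ast(\GGc)$ requires.

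The only points needing care are bookkeeping rather than substantive: the series $\sum_{n=1}^\infty \alpha_n \mu_n^\ast$ converges pointwise since its terms lie in $[0,1]$ and $\sum_{n=1}^\infty\alpha_n = 1$, and duality is affine with respect to convex combinations — this is precisely the displayed identity, which hinges on the weights summing to one. I do not expect any genuine obstacle here; the entire force of the statement is already packaged in Proposition \ref{prop:convex_combinations}.
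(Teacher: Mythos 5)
Your proposal is correct and follows essentially the same route as the paper: the first claim is Proposition \ref{prop:convex_combinations} with $q=1$, and the second is obtained, exactly as in the paper's proof, by passing to the duals, forming $\mu^\ast := \sum_{n\ge1}\alpha_n\mu_n^\ast \in \MMc^\ast(\GGc)$, and using $\sum_{n\ge1}\alpha_n=1$ to see that $\sum_{n\ge1}\alpha_n\mu_{n,\ast}$ is its dual. No gaps.
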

On the other hand, we have from Propositions \ref{prop:null_invariance}\ref{prop:null_invariance(iv)}, \ref{prop:elementary_properties_of_d-pairs}, \ref{prop:mu_of_finite_sets}, and \ref{prop:invariance_under_unions_with_finite_sets} that if
$\mu^\ast$ is an upper density with lower dual $\mu_\ast$, then $\mu^\ast$ and $\mu_\ast$
satisfy axioms \ref{item:FS2}-\ref{item:FS9} with $\HHb = \NNb^+$, $\delta_\ast = \mu_\ast$, and $\delta^\ast = \mu^\ast$.
Thus, it is natural to ask if a lower density in the sense of our definitions must be also a lower density in the sense of Freedman and Sember's work \cite{FrSe}: The next example shows that this is not the case, and we have already noted in Example \ref{exa:Freedman-Sember_densities_are_not_scalable} that the converse does not hold either.
\begin{example}
	\label{exa:freedman&sember}
	Let $f$ and $g$ be two upper densities, and let $\alpha \in [0,1]$ and $q \in [1,\infty[$. We have by Proposition \ref{prop:convex_combinations} that the function 
	$$
	h^\ast := (\alpha f^q + (1-\alpha) g^q)^{\frac{1}{q}}
	$$
	is an upper density too.
	In particular, assume from now on that $f$ is the upper asymptotic density
	and $g$ the upper Banach density. Accordingly, fix $a \in {]0,1]}$ and define
	$$
	V_a := \bigcup_{n = 1}^\infty \llb a{}(2n-1)!+(1-a) (2n)!, (2n)!\rrb.
	$$ 
	Then, consider the sets
	$$
	X := V_a \cup (V_a^{ c} \cap (2\cdot \HHb))
	\quad\text{and}\quad
	Y := V_a \cup (V_a^{ c} \cap (2 \cdot \HHb+1)) \cup \{1\}.
	$$
	It is clear that $X \cup Y = \HHb$ and $X \cap Y = V_a$, and we get from Lemma \ref{lem:asymptotic_upper_density_of_special_sets} that $f(V_a) = a$. So, $f$ being an upper density (in the sense of our definitions) yields
	$$
	f(X) \le f(V_a) + f(V_a^{ c} \cap (2 \cdot \HHb)) \le a +\frac{1}{2},
	$$
	and similarly $f(Y) \le a + \frac{1}{2}$. On the other hand, $V_a$ contains arbitrarily large intervals of consecutive integers, hence $g(X) = g(Y) = g(V_a) = 1$. It follows
	$$
	\left\{
	\begin{array}{l}
	\!\! 1+h^\ast(X \cap Y) = 1 + (\alpha +a^q(1-\alpha))^{\frac{1}{q}} \\
	\!\! h^\ast(X) + h^\ast(Y) \le 2 \cdot (\alpha + (a+1/2)^q(1-\alpha))^{\frac{1}{q}}
	\end{array}
	\right.\!\!.
	$$
	With this in hand, suppose for a contradiction that $1+h^\ast(A \cup B) \le h^\ast(A) + h^\ast(B)$ for all $A, B \subseteq \HHb$ such that $A \cup B = \HHb$ (regardless of the actual values of the parameters $a$, $\alpha$ and $q$), which is equivalent to saying that the conjugate of $h^\ast$ satisfies \ref{item:FS1}. Then, we have from the above that
	$$
	1 + \alpha^{\frac{1}{q}} \le 2 \cdot (\alpha +(a+1/2)^q(1-\alpha))^{\frac{1}{q}},
	$$
	which implies, in the limit as $a \to 0^+$, that
	$
	1 + \alpha^{\frac{1}{q}} \le (2^q\alpha + 1-\alpha)^{\frac{1}{q}}
	$
	for all $\alpha \in [0,1]$ and $q \in [1,\infty[$. This, however, is false (e.g., let $\alpha = 1/2$ and $q = 2$).
\end{example}
Now we establish that the set $\MMc^\ast(\FFc_2)$ has a maximum element, meaning that there exists $\mu^\ast \in \MMc^\ast(\FFc_2)$ such that $\theta^\ast \preceq \mu^\ast$ for all $\theta^\ast \in \MMc^\ast(\FFc_2)$.

We will need the following proposition, which extends and generalizes a criterion used in \cite[\S{ }3, p. 563]{Bu0} to prove that the upper Buck density of the set of perfect squares is zero.
\begin{proposition}
	\label{prop:modular_criterion}
	Fix $X \subseteq \HHb$, let $(\mu_\ast, \mu^\ast)$ be a conjugate pair on $\HHb$ such that $\mu^\ast$ is an upper quasi-density, and for $k \ge 1$ and $S \subseteq \HHb$ denote by $w_k(S)$ the number of residues $h \in \llb 0, k-1 \rrb$ with the property that
	$\mu^\ast(S \cap (k \cdot \HHb + h)) > 0$. Then, for all $k \in \NNb^+$ we have
	\begin{equation}
	\label{equ:inequality_1}
	1 - \frac{w_k(X^c)}{k} \le \mu_\ast(X) \le \mu^\ast(X) \le \frac{w_k(X)}{k}.
	\end{equation}
\end{proposition}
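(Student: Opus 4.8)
The plan is to establish the three inequalities in \eqref{equ:inequality_1} almost independently, with the leftmost one obtained from the rightmost by the duality $\mu_\ast(X) = 1 - \mu^\ast(X^c)$.

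First, the middle inequality $\mu_\ast(X) \le \mu^\ast(X)$ is immediate: since $\mu^\ast$ is an upper quasi-density it satisfies \ref{item:F3}, hence it is subadditive, and so Proposition \ref{prop:elementary_properties_of_d-pairs}\ref{item:prop:elementary_properties_of_d-pairs(vib)} gives $\max(\mu_\ast(X),0) \le \mu^\ast(X)$, which in particular yields $\mu_\ast(X) \le \mu^\ast(X)$.

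For the rightmost inequality, fix $k \in \NNb^+$ and decompose $X$ along the residue classes modulo $k$. Put $X_h := X \cap (k \cdot \HHb + h)$ for $h \in \llb 0, k-1 \rrb$, and note that the set $X \setminus \bigcup_{h=0}^{k-1} X_h$ is finite (indeed it is contained in $\llb 1, k-1 \rrb$, and is empty unless $\HHb = \NNb^+$, since the progressions $k \cdot \HHb + h$ with $0 \le h < k$ cover all of $\HHb$ in the cases $\HHb \in \{\ZZb,\NNb\}$). Therefore, by Proposition \ref{prop:mu_of_finite_sets} and the subadditivity of $\mu^\ast$ (Proposition \ref{prop:elementary_properties_of_d-pairs}\ref{item:prop:elementary_properties_of_d-pairs(iib)}),
$$
\mu^\ast(X) \le \sum_{h=0}^{k-1} \mu^\ast(X_h).
$$
Now, for each $h$ there is a set $S_h \subseteq \HHb$ with $X_h = k \cdot S_h + h$ (namely $S_h := \{z \in \HHb : kz+h \in X\}$), so \ref{item:F4b} and the fact that $\imag(\mu^\ast) \subseteq [0,1]$ (Remark \ref{rem:upper_quasi-densities_are_nonnegative}) give $\mu^\ast(X_h) = \frac1k \mu^\ast(S_h) \le \frac1k$; moreover $\mu^\ast(X_h) \ge 0$, so $\mu^\ast(X_h) = 0$ for every $h$ that is \emph{not} among the $w_k(X)$ residues counted by $w_k$. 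Discarding the vanishing terms, we conclude $\mu^\ast(X) \le \frac{w_k(X)}{k}$. Finally, applying this bound with $X^c$ in place of $X$ yields $\mu^\ast(X^c) \le \frac{w_k(X^c)}{k}$, whence $\mu_\ast(X) = 1 - \mu^\ast(X^c) \ge 1 - \frac{w_k(X^c)}{k}$, which is the leftmost inequality.

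There is no real obstacle here; the only points needing a little care are the bookkeeping in the residue-class decomposition — specifically, absorbing the (finitely many) elements of $\HHb$ not covered by the classes $k \cdot \HHb + h$ via Proposition \ref{prop:mu_of_finite_sets} — and the observation that each intersection $X \cap (k \cdot \HHb + h)$ is again a shifted dilate of a subset of $\HHb$, so that \ref{item:F4b} combined with the normalization $\mu^\ast \le 1$ bounds it by $1/k$ while contributing nothing when $\mu^\ast$ of it is zero.
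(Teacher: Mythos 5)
Your proof is correct and follows essentially the same route as the paper's: decompose along the residue classes modulo $k$, absorb the finitely many uncovered elements via Proposition \ref{prop:mu_of_finite_sets}, use \ref{item:F4b} together with $\imag(\mu^\ast)\subseteq[0,1]$ to bound each class by $\frac{1}{k}$ and nonnegativity to discard the classes not counted by $w_k$, then dualize (the paper states the bound for a generic $S$ and specializes to $S=X$ and $S=X^c$, which is only a cosmetic difference). No gaps.
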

\begin{proof}
	Pick $k \in \mathbf N^+$ and $S \subseteq \HHb$, and let $\WWc_k(S)$ be the set of all integers $h \in \llb 0, k-1 \rrb$ for which $\mu^\ast(S \cap (k \cdot \HHb + h))> 0$, so that $w_k(S) = |\WWc_k(S)|$.
	
	However we choose $h \in \llb 0, k-1 \rrb$, there exists $S_h \subseteq \HHb$ such that 
	$$
	S \cap (k \cdot \HHb + h) = k \cdot S_h + h. 
	$$
	Therefore, Propositions \ref{prop:elementary_properties_of_d-pairs}\ref{item:prop:elementary_properties_of_d-pairs(iib)} and \ref{prop:mu_of_finite_sets} and axioms  \ref{item:F3} and \ref{item:F4b} yield
	\begin{equation}\label{equ:equ_1}
	\begin{split}
	\mu^\ast(S) & \le \sum_{h=0}^{k-1} \mu^\ast(S \cap (k \cdot \HHb + h)) = \sum_{h \in \WWc_k(S)} \mu^\ast(k \cdot S_h + h) \\
	&  = \sum_{h \in \WWc_k(S)} \frac{1}{k} \mu^\ast(S_h) \le \frac{w_k(S)}{k}.
	\end{split}
	\end{equation}
	Note that here we have used, among other things, that
	$\bigcup_{h=0}^{k-1} (k \cdot \HHb + h) = \HHb \setminus \VVc$ for some finite $\mathcal{V} \subseteq \HHb$ (in particular, $\VVc = \llb 1, k-1 \rrb$ if $\HHb = \NNb^+$, and $\VVc = \emptyset$ otherwise), in such a way that
	$$
	S \setminus \mathcal{V}  = (S \cap \HHb) \setminus \mathcal{V} = \bigcup_{h=0}^{k-1} (S \cap (k \cdot \HHb + h)).
	$$
	Thus, we obtain
	\begin{equation}
	\label{equ:equ_2}
	\mu_\ast(S^c) = 1 - \mu^\ast(S) \ge 1 - \frac{w_k(S)}{k},
	\end{equation}
	and it follows by taking $S = X$ in \eqref{equ:equ_1} and $S = X^c$ in \eqref{equ:equ_2} that
	\begin{equation}
	\label{equ:3}
	1 - \frac{w_k(X^c)}{k} \le \mu_\ast(X)\quad\text{and}\quad \mu^\ast(X) \le \frac{w_k(X)}{k},
	\end{equation}
	which, together with Proposition \ref{prop:elementary_properties_of_d-pairs}\ref{item:prop:elementary_properties_of_d-pairs(vib)},
	implies \eqref{equ:inequality_1}.
\end{proof}
\begin{theorem}
	\label{th:buck_is_maximum}
	Let $\bbf^\ast$ be the upper Buck density on $\HHb$. Then $\bbf^\ast$ is the maximum of both $\MMc^\ast(\FFc)$ and $\MMc^\ast(\FFc_2)$.
\end{theorem}
\begin{proof}
	It is clear that $\MMc^\ast(\FFc) \subseteq \MMc^\ast(\FFc_2)$, and by Example \ref{exa:buck's_measure_density} we have $\bbf^\ast \in \MMc^\ast(\FFc)$. Thus, we just need to show that $\bbf^\ast$ is the maximum of $\MMc^\ast(\FFc_2)$.
	
	To this end, fix $X \subseteq \HHb$. It follows from \cite[Theorem 1]{Pas}, which carries over to the slightly more general version of the upper Buck density considered in the present paper,
	that there exists an increasing sequence $(k_i)_{i \ge 1}$ of positive integers such that
	\begin{equation*}
	\bbf^\ast(X) = \lim_{i \to \infty} \frac{r_{k_i}(X)}{k_i},
	\end{equation*}
	where for $k \ge 1$ we write $r_k(X)$ for the number of residues $h \in \llb 0, k-1 \rrb$ with the property that $X \cap (k \cdot \HHb + h) \ne \emptyset$. So Propositions \ref{prop:modular_criterion} and \ref{prop:mu_of_finite_sets} yield that, for every $\mu^\ast \in \MMc^\ast(\FFc_2)$,
	$$
	\mu^\ast(X) \le \liminf_{k \to \infty} \frac{r_k(X)}{k} \le \lim_{i \to \infty} \frac{r_{k_i}(X)}{k_i} = \bbf^\ast(X),
	$$
	which confirms, since $X$ was arbitrary, that $\bbf^\ast$ is the maximum element of $\MMc^\ast(\FFc_2)$.
\end{proof}
\begin{remark}\label{rem:buck_is_maximum}
	Let $\bbf^\ast$ be the upper Buck density on $\mathbf H$, and let $Y \subseteq X \subseteq \HHb$ and $\mu^\ast \in \MMc^\ast(\FFc_2)$. We get from Remark \ref{rem:upper_quasi-densities_are_non-negative}, Theorem \ref{th:buck_is_maximum}, and the monotonicity of $\bbf^\ast$ (Proposition \ref{prop:generalized_Buck's}) that $0 \le \mu^\ast(Y) \le \bbf^\ast(Y) \le \bbf^\ast(X)$. Hence, $\bbf^\ast(X) = 0$ implies $\mu^\ast(Y) = 0$.
\end{remark}
In fact, Theorem \ref{th:buck_is_maximum} can be ``dualized'' to show that $\MMc_\ast(\FFc)$ has also a minimum element, i.e., there is $\mu_\ast \in \MMc_\ast(\FFc)$ such that $\mu_\ast \preceq \theta_\ast$ for all $\theta_\ast \in \MMc_\ast(\FFc_2)$. 
\begin{lemma}
	\label{lem:reversing_order_by_duality}
	Let $(\lambda_\ast, \lambda^\ast)$ and $(\mu_\ast, \mu^\ast)$ be conjugate pairs on $\bf H$. Then $\lambda^\ast \preceq \mu^\ast$ if and only if $\mu_\ast \preceq \lambda_\ast$.
\end{lemma}
\begin{proof}
	Just recall that $\lambda_\ast(X) = 1 - \lambda^\ast(X^c)$ and $\mu_\ast(X) = 1 - \mu^\ast(X^c)$ for every $X \subseteq \mathbf H$.
\end{proof}
\begin{corollary}
	\label{cor:dual_Buck_is_minimal}
	Let $\bbf_\ast$ be the lower Buck density on $\HHb$. Then $\bbf_\ast$ is the minimum of both $\MMc_\ast(\FFc)$ and $\MMc_\ast(\FFc_2)$.
\end{corollary}
\begin{proof}
	Since $\MMc_\ast(\FFc) \subseteq \MMc_\ast(\FFc_2)$ and $\bbf_\ast \in \MMc_\ast(\FFc)$ by Example \ref{exa:buck's_measure_density}, it is enough to prove that $\bbf_\ast$ is the minimum of $\MMc_\ast(\FFc_2)$. But this is immediate by Theorem \ref{th:buck_is_maximum} and Lemma \ref{lem:reversing_order_by_duality}.
\end{proof}
The next result is a generalization of \cite[Theorem 2]{Pas} and a straightforward consequence of Theorem \ref{th:buck_is_maximum} and Corollary \ref{cor:dual_Buck_is_minimal}.
\begin{corollary}
	\label{cor:pasteka_squeeze_on_densities}
	If $\mu: \mathcal{P}(\HHb) \pto \RRb$ is a quasi-density on $\HHb$, then $\dom(\bbf) \subseteq \dom(\mu)$ and $\mu(X) = \bbf(X)$  for every $X \in \dom(\bbf)$, where $\bbf$ is the Buck density on $\HHb$.
\end{corollary}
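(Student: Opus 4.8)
The plan is to deduce everything from the order-theoretic extremality of the upper and lower Buck densities, established in Theorem \ref{th:buck_is_maximum}\ref{item:th:buck_is_maximum_(i)} and Corollary \ref{cor:dual_Buck_is_minimal}. Write $\mu$ as the quasi-density induced by an upper quasi-density $\mu^\ast$ on $\HHb$, and let $\mu_\ast$ be the lower dual of $\mu^\ast$; similarly, let $\bbf^\ast := \bbf^\ast(\mathscr{A}; \dd^\ast)$ be the upper Buck density and $\bbf_\ast$ its lower dual, so that $\bbf$ is the quasi-density induced by $\bbf^\ast$. By Remark \ref{rem:upper_quasi-densities_are_nonnegative} we have $\mu^\ast \in \MMc^\ast(\FFc_2)$ and hence $\mu_\ast \in \MMc_\ast(\FFc_2)$.

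The key step is the sandwich. Since $\bbf^\ast$ is a maximum element of $\MMc^\ast(\FFc_2)$ by Theorem \ref{th:buck_is_maximum}\ref{item:th:buck_is_maximum_(i)}, we have $\mu^\ast \preceq \bbf^\ast$, i.e. $\mu^\ast(X) \le \bbf^\ast(X)$ for every $X \subseteq \HHb$. Dually, since $\bbf_\ast$ is a minimum element of $\MMc_\ast(\FFc_2)$ by Corollary \ref{cor:dual_Buck_is_minimal}, we have $\bbf_\ast \preceq \mu_\ast$, i.e. $\bbf_\ast(X) \le \mu_\ast(X)$ for every $X \subseteq \HHb$. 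Combining these with the inequality $\mu_\ast(X) \le \mu^\ast(X)$ valid for all $X \subseteq \HHb$ (Proposition \ref{prop:elementary_properties_of_d-pairs}\ref{item:prop:elementary_properties_of_d-pairs(vib)}, using that $\mu^\ast$ is subadditive), I get
$$
\bbf_\ast(X) \le \mu_\ast(X) \le \mu^\ast(X) \le \bbf^\ast(X) \qquad \text{for every } X \subseteq \HHb.
$$

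Now fix $X \in \dom(\bbf)$. By the definition of the induced quasi-density, this means $\bbf_\ast(X) = \bbf^\ast(X)$, and $\bbf(X)$ equals this common value. Feeding $\bbf_\ast(X) = \bbf^\ast(X)$ into the chain of inequalities above forces $\mu_\ast(X) = \mu^\ast(X)$, so $X \in \dom(\mu)$ and $\mu(X) = \mu^\ast(X) = \bbf^\ast(X) = \bbf(X)$. This proves $\dom(\bbf) \subseteq \dom(\mu)$ together with the asserted agreement of $\mu$ and $\bbf$ on $\dom(\bbf)$. There is essentially no obstacle here beyond correctly lining up the definitions; the only point that deserves a word is that the two extremality results being invoked are stated for $\MMc^\ast(\FFc_2)$ and $\MMc_\ast(\FFc_2)$ (the quasi-density level), which is exactly what is needed since $\mu^\ast$ is only assumed to be an upper quasi-density, not an upper density.
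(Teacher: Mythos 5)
Your proof is correct and follows exactly the route the paper intends: the paper states that the corollary ``follows at once'' from Theorem \ref{th:buck_is_maximum}\ref{item:th:buck_is_maximum_(i)} and Corollary \ref{cor:dual_Buck_is_minimal}, and your sandwich argument $\bbf_\ast \preceq \mu_\ast \preceq \mu^\ast \preceq \bbf^\ast$ is precisely the omitted detail, including the correct observation that the extremality statements hold at the quasi-density level $\MMc^\ast(\FFc_2)$, $\MMc_\ast(\FFc_2)$.
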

The question of the existence of minimal elements of $\MMc^\ast(\FFc)$ and $\MMc^\ast(\FFc_2)$ is subtler, where a  function $\mu^\ast \in \MMc^\ast(\GGc)$ is minimal if there does not exist any $\theta^\ast \in \MMc^\ast(\GGc)$ for which $\theta^\ast \prec \mu^\ast$. The answer is negative in any model of ZF that admits two or more additive upper densities, and hence in ZFC (see Remark \ref{rem:additivity}):
In fact, let $\theta^\ast$ be an additive upper density on $\HHb$, and suppose for a contradiction that $\theta^\ast$ is not a minimal element of $\MMc^\ast(\FFc_2)$, i.e., there exists $\mu^\ast \in \MMc^\ast(\FFc_2)$ with $\mu^\ast \prec \theta^\ast$. Then Proposition \ref{prop:elementary_properties_of_d-pairs}\ref{item:prop:elementary_properties_of_d-pairs(vib)} and Lemma \ref{lem:reversing_order_by_duality} yield $\theta_\ast \preceq \mu_\ast \prec \theta^\ast$, where $\theta_\ast$ is the lower dual of $\theta^\ast$. This is however impossible, because 
$\theta^\ast = \theta_\ast$.

With this said, we can show that $\MMc^\ast(\GGc)$ and $\MMc_\ast(\GGc)$ have at least another notable structural property related to the order $\preceq$. But first we need some terminology.

Specifically, we let a \textit{complete upper semilattice} be a pair $(L, \le_L)$ consisting of a set $L$ and a (partial) order $\le_L$ on $L$ such that, for every non-empty subset $S$ of $L$, the set
$$
\Lambda(S) := \{y \in L: x \le_L y\text{ for all }x \in S\},
$$
has a least element, namely, there exists $y_0 \in \Lambda(S)$ such that $y_0 \le_L y$ for every $y \in \Lambda(S)$; cf., e.g., \cite[\S\S{ }1.10 and 3.14]{Gra}, where the condition $S \ne \emptyset$ is not assumed.
On the other hand, we say that $(L, \le_L)$ is a complete lower semilattice if and only if $(L, \ge_L)$ is a complete upper semilattice, where $\ge_L$ is the partial order on $L$ defined by taking $x \ge_L y$ if and only if $y \le_L x$.
\begin{proposition}
	\label{prop:upper_semilattice}
	$(\MMc^\ast(\GGc), \preceq)$ is a complete upper semilattice.
\end{proposition}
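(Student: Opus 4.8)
The plan is to show that, for every nonempty $S \subseteq \MMc^\ast(\GGc)$, the set $\Lambda(S)$ has a least element, namely the \emph{pointwise supremum} of $S$. Concretely, I would consider the function $\sigma^\ast \colon \PPc(\HHb) \to \RRb$ defined by $\sigma^\ast(X) := \sup_{\mu^\ast \in S} \mu^\ast(X)$. Since $S \ne \emptyset$ and $\imag(\mu^\ast) \subseteq [0,1]$ for each $\mu^\ast \in S$ (this being part of the definition of $\MMc^\ast(\GGc)$), the supremum is a well-defined real number lying in $[0,1]$; in particular $\sigma^\ast$ is bounded, so $\sigma^\ast \in \mathcal B$ and $\imag(\sigma^\ast) \subseteq [0,1]$.

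The first real task is to check that $\sigma^\ast$ satisfies every axiom in $\GGc$; since $\GGc \subseteq \FFc = \{\ref{item:F1}, \ldots, \ref{item:F5}\}$, it suffices to show that the pointwise supremum of a family of functions, each satisfying one of \ref{item:F1}--\ref{item:F5}, again satisfies that axiom. For \ref{item:F1} this is immediate: $\mu^\ast(\HHb) = 1$ for every $\mu^\ast \in S$, hence $\sigma^\ast(\HHb) = 1$. For \ref{item:F2} and \ref{item:F5}, the (in)equalities $\mu^\ast(X) \le \mu^\ast(Y)$ (when $X \subseteq Y$) and $\mu^\ast(X+h) = \mu^\ast(X)$ hold for each $\mu^\ast \in S$ and are preserved upon taking suprema over $S$; likewise for \ref{item:F4}, since $\sup$ commutes with multiplication by the nonnegative constant $\frac1k$, one gets $\sigma^\ast(k \cdot X) = \sup_{\mu^\ast \in S} \frac1k \mu^\ast(X) = \frac1k \sigma^\ast(X)$. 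The only point that needs a line of argument is \ref{item:F3}: from $\mu^\ast(X \cup Y) \le \mu^\ast(X) + \mu^\ast(Y)$ for all $\mu^\ast \in S$, together with the elementary fact that $\sup_i (a_i + b_i) \le \sup_i a_i + \sup_i b_i$, one obtains $\sigma^\ast(X \cup Y) \le \sup_{\mu^\ast \in S}(\mu^\ast(X) + \mu^\ast(Y)) \le \sigma^\ast(X) + \sigma^\ast(Y)$. Hence $\sigma^\ast \in \MMc^\ast(\GGc)$.

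It then remains to verify that $\sigma^\ast$ is the least element of $\Lambda(S)$. On the one hand, $\mu^\ast(X) \le \sigma^\ast(X)$ for every $\mu^\ast \in S$ and every $X \in \PPc(\HHb)$, so $\mu^\ast \preceq \sigma^\ast$ for all $\mu^\ast \in S$, that is $\sigma^\ast \in \Lambda(S)$. On the other hand, if $\nu^\ast \in \Lambda(S)$, then $\mu^\ast(X) \le \nu^\ast(X)$ for all $\mu^\ast \in S$ and all $X$, so by the definition of the supremum $\sigma^\ast(X) \le \nu^\ast(X)$ for all $X$, i.e. $\sigma^\ast \preceq \nu^\ast$. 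By the arbitrariness of the nonempty set $S \subseteq \MMc^\ast(\GGc)$, this shows that $(\MMc^\ast(\GGc), \preceq)$ is a complete upper semilattice. I do not anticipate any genuine obstacle here: the argument is entirely uniform in $\GGc$, and the single mildly delicate point is the behaviour of suprema with respect to subadditivity noted above.
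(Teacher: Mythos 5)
Your proposal is correct and follows essentially the same route as the paper: take the pointwise supremum over the nonempty family $S$, note it is well defined with values in $[0,1]$, check axiom by axiom that each of \ref{item:F1}--\ref{item:F5} passes to the supremum (with the only mildly nontrivial point being subadditivity, handled exactly as in the paper via $\sup_i(a_i+b_i)\le\sup_i a_i+\sup_i b_i$), and conclude that it is the least upper bound. Your explicit verification that any $\nu^\ast\in\Lambda(S)$ dominates the supremum is a point the paper leaves implicit, but it is the obvious intended argument, so there is no substantive difference.
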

\begin{proof}
	Pick a non-empty subset $S$ of $\MMc^\ast(\GGc)$, and let $\theta^\ast$ denote the function $
	\PPc(\HHb) \to \RRb: X \mapsto \sup_{\mu^\ast \in S} \mu^\ast(X)$, which is well defined because $S \ne \emptyset$ and the image of each function in $S$ is contained in $[0,1]$.
	
	It is clear that $\imag(\theta^\ast) \subseteq [0,1]$ and $\mu^\ast \preceq \theta^\ast$ for all $\mu^\ast \in S$. In addition, if $\mu^\ast(\HHb) = 1$ for every $\mu^\ast \in \MMc^\ast(\GGc)$ then $\theta^\ast(\HHb) = 1$, and if every $\mu^\ast \in \MMc(\GGc)$ is subadditive then
	$$
	\theta^\ast(X \cup Y) \le  \sup_{\mu^\ast \in S} (\mu^\ast(X) + \mu^\ast(Y)) \le \sup_{\mu^\ast \in S} \mu^\ast(X) + \sup_{\mu^\ast \in S} \mu^\ast(Y) = \theta^\ast(X) + \theta^\ast(Y),
	$$
	for all $X, Y \subseteq \HHb$ (i.e., $\mu^\ast$ is subadditive too).
	Similarly, $\theta^\ast$ is monotone, $(-1)$-homogeneous, or translational invariant, respectively, if so is every $\mu^\ast \in \MMc^\ast(\GGc)$ (we omit details).
\end{proof}
Incidentally, Proposition \ref{prop:upper_semilattice} implies that both $\MMc^\ast(\FFc)$ and $\MMc^\ast(\FFc_2)$ have the maximum element (not necessarily the same), but does not identify it more precisely, in contrast to Theorem \ref{th:buck_is_maximum}. Similar considerations apply also to the following result, when compared with Corollary \ref{cor:dual_Buck_is_minimal}.
\begin{corollary}
	$(\MMc_\ast(\GGc), \preceq)$ is a complete lower semilattice.
\end{corollary}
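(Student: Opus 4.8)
The plan is to obtain this at once from Proposition~\ref{prop:upper_semilattice} by transporting the semilattice structure across the duality operation. First I would isolate the two formal properties of the map $D$ sending a function $\mu^\ast\colon\PPc(\HHb)\to\RRb$ to its dual $\mu_\ast\colon X\mapsto 1-\mu^\ast(X^c)$: it is an \emph{involution} (the dual of the dual is the original function, as remarked right after the definition of the dual), and it is \emph{order-reversing} for $\preceq$, since $\lambda^\ast\preceq\mu^\ast$ holds iff $\lambda^\ast(X^c)\le\mu^\ast(X^c)$ for all $X\subseteq\HHb$, iff $\mu_\ast(X)=1-\mu^\ast(X^c)\le 1-\lambda^\ast(X^c)=\lambda_\ast(X)$ for all $X$, iff $\mu_\ast\preceq\lambda_\ast$. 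This is precisely the computation in the proof of Lemma~\ref{lem:reversing_order_by_duality}, which does not in fact use the conjugate-pair hypothesis, so it applies to $\MMc^\ast(\GGc)$ and $\MMc_\ast(\GGc)$ whether or not $\ref{item:F1}\in\GGc$. Since $\MMc_\ast(\GGc)=D(\MMc^\ast(\GGc))$ by definition and $D$ is an involution, $D$ restricts to an order-reversing bijection $\MMc^\ast(\GGc)\leftrightarrow\MMc_\ast(\GGc)$ that is its own inverse.

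The second step is the elementary order-theoretic transfer. Given a nonempty $S\subseteq\MMc_\ast(\GGc)$, I would put $S':=D(S)\subseteq\MMc^\ast(\GGc)$, which is again nonempty; by Proposition~\ref{prop:upper_semilattice} the set $\Lambda(S')$ of upper bounds of $S'$ has a $\preceq$-least element $\theta^\ast\in\MMc^\ast(\GGc)$ (explicitly $\theta^\ast\colon X\mapsto\sup_{\mu^\ast\in S'}\mu^\ast(X)$), so that $\theta_\ast:=D(\theta^\ast)\in\MMc_\ast(\GGc)$, which unwinds to $X\mapsto\inf_{\mu_\ast\in S}\mu_\ast(X)$. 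I would then check that $\theta_\ast$ is a $\preceq$-greatest lower bound of $S$ in $\MMc_\ast(\GGc)$: for $\mu_\ast\in S$ one has $D(\mu_\ast)\in S'$, hence $D(\mu_\ast)\preceq\theta^\ast$, and applying $D$ gives $\theta_\ast\preceq\mu_\ast$; and if $\nu_\ast\in\MMc_\ast(\GGc)$ satisfies $\nu_\ast\preceq\mu_\ast$ for every $\mu_\ast\in S$, then applying $D$ shows $D(\nu_\ast)$ lies in $\Lambda(S')$, whence $\theta^\ast\preceq D(\nu_\ast)$, and applying $D$ once more yields $\nu_\ast\preceq\theta_\ast$. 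Unwinding the definitions, this says exactly that for the reverse order $\succeq$ the set $\{\nu_\ast\in\MMc_\ast(\GGc):\mu_\ast\succeq\nu_\ast\text{ for all }\mu_\ast\in S\}$ has a least element, i.e. that $(\MMc_\ast(\GGc),\succeq)$ is a complete upper semilattice, which is the claim.

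I do not expect a genuine obstacle here; the only points needing care are the bookkeeping that unwinds ``complete lower semilattice'' into ``every nonempty subset admits a greatest lower bound'', and the remark above that the argument of Lemma~\ref{lem:reversing_order_by_duality} is valid without assuming $\mu^\ast(\HHb)=1$. As an alternative to the abstract transfer one could argue directly, in the spirit of the proof of Proposition~\ref{prop:upper_semilattice}, by verifying that $X\mapsto\inf_{\mu_\ast\in S}\mu_\ast(X)$ lies in $\MMc_\ast(\GGc)$ — its dual being $X\mapsto\sup_{\mu^\ast\in S'}\mu^\ast(X)\in\MMc^\ast(\GGc)$ — but routing everything through the order-reversing involution $D$ avoids re-deriving the monotonicity, subadditivity and homogeneity checks and keeps the proof to a couple of lines.
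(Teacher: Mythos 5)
Your proposal is correct and follows essentially the same route as the paper: both reduce the claim to Proposition~\ref{prop:upper_semilattice} via the duality $X \mapsto 1-\mu^\ast(X^c)$, identifying the greatest lower bound of a nonempty $T \subseteq \MMc_\ast(\GGc)$ with the pointwise infimum, i.e.\ the dual of the pointwise supremum of the duals. Your packaging through the order-reversing involution (and your explicit check of the greatest-lower-bound property, which the paper leaves implicit) is only a stylistic refinement of the paper's direct computation $\theta_\ast(X) = 1 - \theta^\ast(X^c)$.
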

Finally, let $S$ be a set and $(A, \le_A)$ a directed preordered set. To wit, $A$ is a set and $\le_A$ is a reflexive and transitive binary relation (i.e., a preorder) on $A$ such that for every non-empty finite $B \subseteq A$ there is $\alpha \in A$ with $\beta \le_A \alpha$ for all $\beta \in B$. A net $(f_\alpha)_{\alpha  \in  A}$ of functions $S \to \RRb$ is any function $\eta: A \to \mathbf{R}^S$.
We say that the net $(f_\alpha)_{\alpha  \in  A}$ is pointwise convergent if there exists a function $f: S \to \RRb$, which we call a pointwise limit of $(f_\alpha)_{\alpha  \in  A}$, such that for every $x \in S$ the real net $(f_\alpha(x))_{\alpha \in A}$ converges to $f(x)$
in the usual topology on $\RRb$.

With these definitions and the above notation in place, we have the following:
\begin{proposition}
	\label{prop:pointwise_limits}
	Let $(\mu_\alpha)_{\alpha  \in  A}$ and $(\lambda_\alpha)_{\alpha  \in  A}$ be, respectively, pointwise convergent nets with values in $\MMc^\ast(\GGc)$ and $\MMc_\ast(\GGc)$, and denote by $\mu$ a pointwise limit of $(\mu_\alpha)_{\alpha  \in  A}$ and by $\lambda$ a pointwise limit of $(\lambda_\alpha)_{\alpha  \in  A}$. Then $\mu$ and $\lambda$ are uniquely determined and belong, respectively, to $\MMc^\ast(\GGc)$ and $\MMc_\ast(\GGc)$.
\end{proposition}
We conclude the section by adding one more distinguished item to our list of upper densities:
\begin{example}
	\label{exa:polya}
	Let $\mathfrak{p}^\ast$ be the upper P\'olya density on $\HHb$, viz. the function
	$$
	\PPc(\HHb) \to \RRb: X \mapsto \lim_{s \to 1^-} \limsup_{n \to \infty} \frac{|X \cap [ns,n]|}{(1-s)n}.
	$$
	It is not difficult to check that the dual of $\mathfrak p^\ast$ is the function
	$$
	\PPc(\HHb) \to \RRb: X \mapsto \lim_{s \to 1^-} \liminf_{n \to \infty} \frac{|X \cap [ns,n]|}{(1-s)n},
	$$
	which we refer to as the lower P\'olya density on $\HHb$. Among other things, $\mathfrak p^\ast$ has found a number of remarkable applications in analysis and economic theory (see, e.g., \cite{Pol}, \cite{Lev},
	\cite{Leo17}, and \cite{Mar}), but what is perhaps more interesting in the frame of the present work is that $\mathfrak p^\ast$ is an upper density in the sense of our definitions: This follows from Proposition \ref{prop:pointwise_limits} and the fact that $\mathfrak p^\ast$ is the pointwise limit of the real net of the upper $\alpha$-densities on $\HHb$, see \cite[Theorem 4.3]{LMS}.
\end{example}

\section{Closing remarks}
\label{sec:closing_remarks}
Below, we draw a list of questions we have not been able to answer, some of them being broad generalizations of questions from the literature on densities.
\begin{question}
	\label{open:density_sets}
	Let $\mu^\ast$ be an upper quasi-density on $\mathbf H$ and $\mu_\ast$ its conjugate, and for every $X \subseteq \HHb$ denote by $\mathfrak{D}_X(\mu^\ast)$ the set of all pairs $(a_1, a_2) \in \RRb^2$ such that $a_1 = \mu_\ast(Y)$ and $a_2 = \mu^\ast(Y)$ for some $Y \subseteq X$. Is $\mathfrak{D}_X(\mu^\ast)$ a convex or closed subset of $\RRb^2$ for every $X \subseteq \HHb$?
\end{question}
Notice that, if $\mu^\ast$ is an upper density, then $\mathfrak{D}_X(\mu^\ast)$ is contained, by Proposition \ref{prop:elementary_properties_of_d-pairs}\ref{item:prop:elementary_properties_of_d-pairs(vib)}, in the trapezium
$\{(a_1, a_2) \in [0,1]^2: 0 \le a_1 \le \mu_\ast(X)\ \text{and}\ a_1 \le a_2 \le \mu^\ast(X)\}$,
but this is no longer the case when $\mu^\ast$ does not satisfy \ref{item:F2}, as it follows from Theorem \ref{th:independence_of_(F2)}.

Actually, the answer to Question \ref{open:density_sets} is positive when $\mu^\ast$ is, for some real exponent $\alpha \ge -1$, the classical upper $\alpha$-density on $\NNb^+$, as essentially proved in \cite{GrekThes, Grek78}. More in general, the same is true for certain upper weighted densities, as we get from \cite[Theorem 2]{GMT}.

On a related note, we ask the following question, which has a positive answer for the Buck density and the Banach density, see \cite[Theorem 2.1]{PaSa} and \cite[Theorem 4.2]{GLS}, respectively.
\begin{question}
	\label{open:Darboux_for_densities}
	Let $\mu$ be a quasi-density on $\HHb$. Given $X \in  \dom(\mu)$ and $a \in [0, \mu(X)]$, does there exist $Y \in \dom(\mu)$ such that $Y \subseteq X$ and $\mu(Y) = a$?
\end{question}
It is perhaps worth mentioning that, if $\mu^\ast$ is an upper quasi-density on $\HHb$ and $X \subseteq Y \subseteq \HHb$, then for every $a \in [\mu^\ast(X), \mu^\ast(Y)]$ there exists $A \in \mathcal P(\HHb)$ with $X \subseteq A \subseteq Y$ and $\mu^\ast(A) = a$, see \cite[Theorem 1]{LT17}: This provides a positive answer to a weaker version of Question \ref{open:Darboux_for_densities}.

In fact, the very existence of upper quasi-densities that are not upper densities (Theorem \ref{th:independence_of_(F2)}) raises a number of problems.
In particular, Remark \ref{rem:buck_is_maximum} suggests the following:
\begin{question}
	\label{open:subsets_of_sets_of_zero_density}
	Let $\mu^\ast$ be an upper quasi-density on $\HHb$. If $X \subseteq \HHb$ and $\mu^\ast(X) = 0$, can there exist a set  $Y \subseteq X$ for which $\mu^\ast(Y) \ne 0$?
	And is it possible that $\mu^\ast(X) \ne \mu^\ast(Y)$ for some $X,Y \subseteq \HHb$ such that $|X \triangle Y| < \infty$?
\end{question}
The case of upper densities is covered by the following supplement to Proposition \ref{prop:elementary_properties_of_d-pairs}\ref{item:prop:elementary_properties_of_d-pairs(vb)}:
\begin{proposition}
	\label{prop:invariance_under_unions_with_finite_sets}
	Let $\mu^\ast$ be an upper density on $\HHb$ and $\mu_\ast$ its conjugate, and pick $X, Y \subseteq \HHb$. If $|X \triangle Y| < \infty$, then $\mu^\ast(X) = \mu^\ast(Y)$ and $\mu_\ast(X) = \mu_\ast(Y)$.
\end{proposition}
\begin{proof}
	Assume that $|X \triangle Y| < \infty$. By Proposition \ref{prop:mu_of_finite_sets}, we have $\mu^\ast(X \triangle Y) = 0$, and since $X \triangle Y = X^c \triangle Y^c$, this implies, along with Proposition \ref{prop:null_invariance}\ref{prop:null_invariance(i)}, that $\mu^\ast(X) = \mu^\ast(Y)$ and $\mu^\ast(X^c) = \mu^\ast(Y^c)$. Therefore, we find
	$
	\mu_\ast(X) = 1 - \mu^\ast(X^c) = 1 - \mu^\ast(Y^c) = \mu_\ast(Y)$.
\end{proof}
A special case of Question \ref{open:subsets_of_sets_of_zero_density}, which would simplify some of the proofs of this paper and show that the case $\HHb = \NNb^+$ can be reduced, to some degree, to the case $\HHb = \NNb$, is as follows:
\begin{question}
	\label{open:unique_extension}
	Can an upper quasi-density on $\NNb^+$ be \textit{uniquely} extended to an upper quasi-density on $\NNb$?
\end{question}
We obtain from Propositions \ref{prop:null_invariance}\ref{prop:null_invariance(iv)} and \ref{prop:mu_of_finite_sets} that if we replace upper quasi-densities with upper densities in Questions \ref{open:subsets_of_sets_of_zero_density} and \ref{open:unique_extension}, then the answer to the former is negative and the answer to the latter is positive. The same argument leads to the next proposition (we omit details):
\begin{proposition}
	\label{prop:translational_invariance_of_lower_densities}
	Let $(\mu_\ast, \mu^\star)$ be a conjugate pair on $\HHb$ such that $\mu^\ast$ satisfies \ref{item:F2}, \ref{item:F3} and \ref{item:F5}. Then $\mu_\star$ is translational invariant.
\end{proposition}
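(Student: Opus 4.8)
The plan is to reduce the statement to a claim about $\mu^\ast$ alone. Since $\mu_\ast(Z) = 1 - \mu^\ast(Z^c)$ for every $Z \subseteq \HHb$, proving that $\mu_\ast$ is translational invariant is the same as proving that $\mu^\ast((X+h)^c) = \mu^\ast(X^c)$ for all $X \subseteq \HHb$ and $h \in \NNb$, where throughout the complements are taken inside $\HHb$. First I would record the elementary identity $(X+h)^c = (\HHb \setminus (\HHb+h)) \cup (X^c + h)$, with the two sets on the right disjoint: indeed $n \mapsto n+h$ maps $\HHb$ bijectively onto $\HHb+h$, so inside $\HHb+h$ the complement of $X+h$ is precisely $X^c+h$, while the rest of $\HHb \setminus (X+h)$ is the ``defect'' $\HHb \setminus (\HHb+h)$, which is empty when $\HHb = \ZZb$ and is a finite set of $h$ elements (namely $\llb 0,h-1\rrb$ or $\llb 1,h\rrb$) when $\HHb = \NNb$ or $\HHb = \NNb^+$.

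If $\HHb = \ZZb$ the proof ends at once: $(X+h)^c = X^c+h$, so $\mu^\ast((X+h)^c) = \mu^\ast(X^c+h) = \mu^\ast(X^c)$ by \ref{item:F5}, and hence $\mu_\ast(X+h) = \mu_\ast(X)$. If $\HHb \in \{\NNb,\NNb^+\}$, put $F := \HHb \setminus (\HHb+h)$. Then $\mu^\ast(F) = 0$ by Proposition \ref{prop:mu_of_finite_sets}, and since $\mu^\ast$ is monotone and subadditive by \ref{item:F2}-\ref{item:F3}, Proposition \ref{prop:null_invariance}\ref{prop:null_invariance(ii)} applied with the null set $F$ yields $\mu^\ast((X+h)^c) = \mu^\ast(F \cup (X^c+h)) = \mu^\ast(X^c+h)$; one further use of \ref{item:F5} turns the right-hand side into $\mu^\ast(X^c)$. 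Putting the pieces together, $\mu_\ast(X+h) = 1 - \mu^\ast((X+h)^c) = 1 - \mu^\ast(X^c) = \mu_\ast(X)$, and the conclusion follows by the arbitrariness of $X$ and $h$.

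The step I expect to require the most care is exactly the case $\HHb \neq \ZZb$: translational invariance of $\mu^\ast$ does not pass to its dual for free there, because $(X+h)^c$ is strictly larger than $X^c+h$, carrying the extra initial block $\HHb \setminus (\HHb+h)$. What makes the argument go through is that this extra block is finite, hence $\mu^\ast$-negligible, so that monotonicity together with subadditivity squeezes it away: one always has $\mu^\ast(X^c) \le \mu^\ast((X+h)^c) \le \mu^\ast(X^c) + \mu^\ast(F) = \mu^\ast(X^c)$, using \ref{item:F5}, \ref{item:F2}, \ref{item:F3} and Proposition \ref{prop:mu_of_finite_sets} in turn. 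All the remaining manipulations are routine bookkeeping with the complement operation and axiom \ref{item:F5}.
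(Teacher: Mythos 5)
Your proof is correct and follows essentially the same route as the paper's: both hinge on the fact that $(X+h)^c$ and $X^c+h$ differ by at most finitely many elements (your decomposition $(X+h)^c=(\HHb\setminus(\HHb+h))\cup(X^c+h)$ just makes this explicit, with the $\ZZb$ case degenerate), so that Propositions \ref{prop:mu_of_finite_sets} and \ref{prop:null_invariance} yield $\mu^\ast((X+h)^c)=\mu^\ast(X^c+h)$, after which \ref{item:F5} and duality finish the argument. The only caveat, shared with the paper's own proof, is that Proposition \ref{prop:mu_of_finite_sets} is stated under \ref{item:F4b}, which is not listed among the hypotheses of this proposition, so your appeal to it is exactly as justified as the one the paper itself makes.
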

On the other hand, we have not succeeded in settling the following:
\begin{question}
	\label{open:low_quasi-density_is_shift_invariant}
	Is it true that every lower density is $(-1)$-homogeneous? Is there a lower quasi-density that is not translational invariant?
\end{question}
On a different note, one may wonder if a set $X \subseteq \mathbf H$ such that $\mu^\ast(X) > 0$ for some upper density $\mu^\ast$, has to contain a finite arithmetic progression of length $n$ for some $n \ge 3$ (or even for every $n \ge 3$). But the answer is negative:
It follows from \cite[Theorem 3.2]{PaSa} that the upper Buck density of the set $X := \{n + n!: n \in \NNb\}$ is $1$, yet $X$ does not contain any finite arithmetic progression of length $3$.

Loosely speaking, this means that neither Szemer\'edi's theorem \cite{Szem} nor Roth's theorem \cite{Roth1, Roth2} are ``characteristic of the theory of upper densities'', as long as the notion of upper density is interpreted in the lines of the present work. So it could be interesting to tackle the following:

\begin{question}
	\label{quest:roth_szemeredi}
	Does there exist a reasonable set of axioms, alternative to or sharper than \ref{item:F1}-\ref{item:F5}, for which an ``abstract version'' of Roth's or Szemer\'edi's theorem can be proved?
\end{question}
\textit{Note added in proof.} It turns out that Question \ref{open:unique_extension} has an affirmative answer. In fact, let $\mu^\ast$ be an upper [quasi-]density on $\mathbf N^+$. Then it is relatively easy to verify that the function
$$
\bar{\mu}^\ast: \mathcal{P}(\mathbf{N}) \to \mathbf{R}: X \mapsto \mu^\ast(X+1)
$$
is an upper [quasi-]density on $\mathbf{N}$; and that any extension of $\mu^\ast$ to a shift-invariant function $f: \mathcal{P}(\mathbf{N}) \to \mathbf{R}$ does coincide with $\bar{\mu}^\ast$, on account of the fact that, for every $X \subseteq \mathbf{N}$, one has $X+1 \subseteq \mathbf{N}^+$, and hence $f(X) = f(X+1) = \mu^\ast(X+1) = \bar{\mu}^\ast(X+1) = \bar{\mu}^\ast(X)$.
\section*{Acknowledgments}
The authors are grateful to Georges Grekos (Universit\'e de St-\'Etienne, FR) for his valuable advice; to Christopher O'Neill (UC Davis, US) for fruitful discussions about the independence of axiom \ref{item:F2}; to Carlo Sanna (Universit\`a di Torino, IT) for a careful proofreading;
to Joseph~H.~Silverman (Brown University, US)
and Martin Sleziak (Comenius University, SK)
for useful comments on \href{http://mathoverflow.net/}{MathOverflow.net}; and to an anonymous referee for suggestions that helped improving the overall presentation of the paper.

\end{document}